\documentclass[letter,11pt]{article}

\usepackage[linktocpage=true]{hyperref}
\usepackage[margin=1.2 in, top=1 in, bottom= 1.2 in]{geometry}

\usepackage[normalem]{ulem}

\usepackage{lipsum}
\newcommand\blfootnote[1]{%
  \begingroup
  \renewcommand\thefootnote{}\footnote{#1}%
  \addtocounter{footnote}{-1}%
  \endgroup
}

\makeatletter
\g@addto@macro\normalsize{
  \setlength\abovedisplayskip{8pt}
  \setlength\belowdisplayskip{8pt}
  \setlength\abovedisplayshortskip{8pt}
  \setlength\belowdisplayshortskip{8pt}
  }
\makeatother

\usepackage{tocloft}

\usepackage[english]{babel}
\usepackage{amsfonts}
\usepackage{mathrsfs}
\usepackage{bbm}
\usepackage{latexsym}
\usepackage{amssymb}
\usepackage{mathtools}

\usepackage{enumitem}
\setlist{nolistsep}
\usepackage{amsthm}
\usepackage[capitalize]{cleveref}
\crefformat{footnote}{#2\footnotemark[#1]#3} 

\newcommand\eqnitem[1][]{%
  \ifx\relax#1\relax  \item \else \item[#1] \fi
  \abovedisplayskip=0pt\abovedisplayshortskip=0pt~\vspace*{-\baselineskip}}

\newtheoremstyle{plain}{3mm}{3mm}{\slshape}{}{\bfseries}{.}{.5em}{}
\newtheoremstyle{definition}{2mm}{2mm}{}{}{\bfseries}{.}{.5em}{}
\theoremstyle{plain}
	
\newtheorem{theorem}{Theorem}

\newtheorem{lemma}[theorem]{Lemma}

\newtheorem{corollary}[theorem]{Corollary}

\newtheorem{question}[theorem]{Question}
\theoremstyle{definition}
\newtheorem{definition}[theorem]{Definition}

\newtheorem{example}[theorem]{Example}

\theoremstyle{plain}
\newcounter{MainTheoremCounter}

\newtheorem{Maintheorem}[MainTheoremCounter]{Theorem}

\theoremstyle{plain}
\newtheorem*{namedthm}{\namedthmname}
\newcounter{namedthm}
	

\usepackage{chngcntr}
\counterwithin{theorem}{section}

\numberwithin{equation}{section}

\allowdisplaybreaks

\usepackage{xcolor}
\usepackage{xcolor}
\definecolor{Color2}{rgb}{0.78, 0.11, 0.0}
\hypersetup{citecolor = black,colorlinks,
			linkcolor = black,
			urlcolor = Color2}
\definecolor{DarkGreen}{RGB}{1,150,32}
\hypersetup{citecolor = black,colorlinks,
			linkcolor = black,
			urlcolor = Color2}

\usepackage{titlesec}
\titleformat{\section}
  {\Large\center\bfseries}
  {\thesection.}{.7em}{}
\titlespacing*{\section}{0pt}{3.5ex plus 0ex minus 0ex}{1.5ex plus 0ex}
\titleformat{\subsection}
  {\large\center\bfseries}
  {\thesubsection.}{.7em}{}
\titlespacing*{\subsection}{0pt}{3.5ex plus 0ex minus 0ex}{1.5ex plus 0ex}
\titleformat{\subsubsection}
  {\center\bfseries}
  {\thesubsubsection.}{.7em}{}
\titlespacing*{\subsubsection}{0pt}{3.5ex plus 0ex minus 0ex}{1.5ex plus 0ex}

\addto\captionsenglish{}

\usepackage{titling}
\usepackage{bibentry}
\nobibliography* 

\usepackage{authblk}

\newcommand{\Cech}{\v{C}ech}

\newcommand{\N}{\mathbb{N}}
\newcommand{\Z}{\mathbb{Z}}

\newcommand{\T}{\mathbb{T}}

\newcommand{\defeq}{\vcentcolon=}

\newcommand\restr[2]{{ \left.\kern-\nulldelimiterspace #1 \right|_{#2}}}

\usepackage[normalem]{ulem}

\newcommand{\family}{\mathcal{F}}

\newcommand{\RP}{\mathbf{RP}}
\newcommand{\seq}{S_\text{eq}}

\renewcommand{\emptyset}{\varnothing}

\newcommand{\gro}{\text{Gr}}
\newcommand{\igro}{i_{\gro{}}}

\newcommand{\lean}[1]{}
\newcommand{\leanok}{}

\begin{document}

\title{The regionally proximal relation for commutative semigroup actions}

\author[1]{Angelina Blahodatna}
\author[1]{Lauren Detmold}
\author[1]{Daniel Glasscock}
\author[2]{Anh N. Le}

\affil[1]{\small Dept. of Mathematics and Statistics, U. of Massachusetts Lowell, Lowell, MA, USA}
\affil[2]{\small Dept. of Mathematics,
	U. of Denver, Denver, CO, USA}

\date{}

\maketitle

\blfootnote{2020 \emph{Mathematics Subject Classification.}  Primary: 37B05. Secondary: 37B20.}

\blfootnote{\emph{Key words and phrases. }
regionally proximal relation,
equicontinuous semigroup actions,
ultrafilters in topological dynamics,
natural extensions}

\begin{abstract}
    The regionally proximal and equicontinuous structure relations are fundamental relations in topological dynamics that capture the equicontinuous behavior in a topological dynamical system and its factors. 
    For minimal actions of abelian groups, these relations are known to be equivalence relations and are known to coincide.
    In this paper, we generalize these facts to semigroup actions: for minimal actions of commutative semigroups, the regionally proximal and equicontinuous structure relations are equivalence relations and the two coincide.
    We also develop the machinery of natural extensions for commutative semigroup actions that act by surjections, concluding that the maximal equicontinuous factor of a minimal action of a commutative semigroup is the same as the maximal equicontinuous factor of the group action into which it embeds.
    
    We formally verify all of the results in this paper in Lean.  The main results are verified in a Palomar submission, and we link to a Github repository containing code for the complete verification.
\end{abstract}

\clearpage

\setcounter{tocdepth}{2}
\tableofcontents

\clearpage

\section{Introduction}

In this paper, a \emph{(topological dynamical) system} is an action of a (discrete) semigroup $(S,\cdot)$ on a compact Hausdorff space $X$ by continuous maps.  We call $X$ an \emph{$S$-system} and denote the action of $S$ by juxtaposition: for $s \in S$ and $x \in X$, the image of $x$ under the map $s : X \to X$ is written $sx$.
A \emph{factor} of $X$ is an $S$-system $Y$ together with a continuous surjection $\pi: X \to Y$ that intertwines the actions of $S$, i.e. $s (\pi x) = \pi (s x)$ for all $s \in S$ and $x \in X$.
As shorthand, we write that $\pi: X \to Y$ is a factor of $X$ or an \emph{extension of $Y$}. The systems $X$ and $Y$ are \emph{isomorphic} if there exists a homeomorphism $X \to Y$ that intertwines the actions of $S$.

Equicontinuity and minimality play an important role in our results.
An $S$-system $X$ is \emph{equicontinuous} if the family of maps $\{s : X \to X \mid s \in S\}$ is (uniformly) equicontinuous: for all open sets $\beta \subseteq X^2$ containing the diagonal, there exists an open set $\alpha \subseteq X^2$ containing the diagonal such that for all $s \in S$, $s \alpha \defeq \{(sx, sy) \mid (x,y) \in \alpha\} \subseteq \beta$.  The system $X$ is \emph{minimal} if for all $x \in X$, the set $\{sx \mid s \in S\}$ is dense in $X$.

\subsection{The regionally proximal and equicontinuous structure relations}
\label{sec_rp_and_seq}

The regionally proximal relation is a fundamental relation in topological dynamics that captures equicontinuity in a system and its factors.  It has traditionally been considered only in the group action setting, that is, when $S$ is a group; we define it here for semigroup actions, as it appears in \cite{auslander_dai_2019, dai_xiao_2017}.

\begin{definition}[Regionally proximal relation]
\label{def:rp_forward}
    Let $S$ be a semigroup, $X$ be an $S$-system, and $x, y \in X$.  The point $x$ is \emph{regionally proximal to $y$} if for all open sets $U \subseteq X$ containing $x$, $V \subseteq X$ containing $y$, and $\alpha \subseteq X^2$ containing the diagonal, there exist $x' \in U$, $y' \in V$, and $s \in S$ with
    \[
        (sx', sy') \in \alpha.
    \]
    Equivalently,
    \[
        \RP \defeq \bigcap_\alpha \overline{\bigcup_{s \in S} s^{-1} \alpha},
    \]
    where $s^{-1} \alpha \defeq \{(w,z) \in X^2 \mid (sw, sz) \in \alpha\}$ and where the intersection is over all open sets $\alpha \subseteq X^2$ containing the diagonal.
    We write $\RP_{X,S} \subseteq X^2$ for the set of pairs $(x,y)$ where $x$ is regionally proximal to $y$.
    To simplify the notation, we write $\RP_X$, $\RP_S$, or $\RP$ when the acting semigroup, system, or both are clear from the context.
\end{definition}

Loosely speaking, the point $x$ is regionally proximal to $y$ if there are points near $x$ and $y$ whose images under a map in $S$ are nearby. (There is another natural way one might generalize the definition of $\RP$ from group actions to semigroup actions; see the discussion around \cref{mainthm_RP_is_backward_RP} below.)

For group actions, the regionally proximal relation has been well-studied and has found numerous applications.  To establish some context, we consider group actions in the current subsection and treat the more general case of semigroup actions in the next subsection.  When $S$ is a group, it is well-known that
\begin{enumerate}
    \item $\RP$ is a reflexive, symmetric, closed, $S$-invariant relation on $X$;
    
    \item \label{item_rp_fact_two} the system $X$ is equicontinuous if and only if $\RP$ is contained in the diagonal (cf. \cite[IV. 2.10]{devries_book_1993}); and, more generally,
    
    \item \label{item_rp_fact_three} if the system $X$ is minimal, a factor $\pi: X \to Y$ is equicontinuous if and only if $\RP \subseteq \{(x,y) \in X^2 \mid \pi x = \pi y\}$ (cf. \cite[V. 2.14]{devries_book_1993}).
\end{enumerate}
It is important to know when $\RP$ is transitive and, hence, an equivalence relation: the quotient $X / \RP$ is a compact Hausdorff space (cf. \cref{lemma_quotient_is_compact_hausdorff}), the quotient map $X \to X/\RP$ is a factor map of $S$-systems, and, by Fact \ref{item_rp_fact_three}, $X/\RP$ is the largest equicontinuous factor of $X$.

Closely related to the regionally proximal relation is the equicontinuous structure relation, $\seq \subseteq X^2$, defined for semigroup actions by $(x,y) \in \seq$ if $\pi x = \pi y$ for all equicontinuous factors $\pi: X \to Y$. Just as for $\RP$, the equicontinuous structure relation has classically been considered only for group actions.
When $S$ is a group, it is well-known that
\begin{enumerate}
    \setcounter{enumi}{3}
    \item \label{item_rp_fact_four}
    $\seq$ is a closed, $S$-invariant equivalence relation, and $X / \seq$ is the largest equicontinuous factor of $X$ (cf. \cite[V. 1.3]{devries_book_1993}); and
    
    \item 
    \label{item_rp_fact_five}
    if $S$ is abelian and $X$ is minimal, then $\seq = \RP$; in particular, $\RP$ is an equivalence relation and $X \to X/\RP$ is the largest equicontinuous factor of $X$ (\cite[Thm. 1.1]{veech_1968}).
\end{enumerate}
It follows from Facts \ref{item_rp_fact_three} and \ref{item_rp_fact_four} that $\RP \subseteq \seq$.  We see, then, that equality holds if and only if $\RP$ is an equivalence relation.

While the facts summarized here have been known for more than half a century, the fundamental connection between the regionally proximal relation and the largest equicontinuous factor of a system remains relevant today.
Host, Kra, and Maass \cite{host_kra_maass_2010} defined higher-order regionally proximal relations and connected them to nilfactors, algebraically-structured factors of a system that generalize the group rotations behind the largest equicontinuous factors.
These higher-order variants were subsequently shown by Shao and Ye \cite{shao_ye_2012} to be equivalence relations.
Together, these results have provided the foundation for numerous further developments and applications over the past two decades.

\subsection{Regional proximality and equicontinuity for semigroup actions}
\label{sec_rp_and_seq_for_semi_acts}

As discussed above, the regionally proximal and equicontinuous structure relations for group actions have been studied extensively. On the contrary, to our knowledge, only two papers \cite{auslander_dai_2019,dai_xiao_2017} in the literature directly address $\RP$ for semigroup actions.
Among other things, these papers connect regional proximality to equicontinuity by generalizing Fact \ref{item_rp_fact_two} in \cref{sec_rp_and_seq} to semigroup actions (under some necessary additional assumptions); see \cite[Cor. 1.16]{auslander_dai_2019} and \cite[Thm. 1.5]{dai_xiao_2017}.  In this paper, we aim to generalize Facts \ref{item_rp_fact_three}, \ref{item_rp_fact_four}, and \ref{item_rp_fact_five} in the same way.

The pursuit of generality is not for mere taxonomic reasons: many central notions in topological dynamics have natural semigroup analogues, motivated both by the ubiquity of non-invertible dynamics and by applications to the theory of recurrence, combinatorial number theory, and the structure theory of dynamical systems.
Neither is this pursuit entirely straightforward: a number of technical hurdles are introduced when moving from group actions to semigroup actions.
(We discuss just such a hurdle below the statement of \cref{mainthm_equi_icer_iff_contains_rp}.)
We develop techniques to overcome these hurdles in this paper and demonstrate a combinatorial application of \cref{mainthm_RP_is_eq_relation} in a separate paper \cite{blahodatna_detmold_glasscock_le_Delta_2026}.

Our first main result generalizes Fact \ref{item_rp_fact_three} in \cref{sec_rp_and_seq} under the additional assumption that $S$ is commutative.  We prove \cref{mainthm_equi_icer_iff_contains_rp} in \cref{sec_rp_equicontint_and_proof_of_a}.

\begin{Maintheorem}
\label{mainthm_equi_icer_iff_contains_rp}
    Let $S$ be a commutative semigroup and $X$ be a minimal $S$-system. A factor $\pi: X \to Y$ is equicontinuous if and only if $\RP \subseteq \{(x,y) \in X^2 \mid \pi x = \pi y\}$.
\end{Maintheorem}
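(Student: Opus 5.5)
The plan is to prove the two implications separately. The forward direction (equicontinuous $\Rightarrow$ $\RP \subseteq R_\pi \defeq \{(x,y) \mid \pi x = \pi y\}$) should follow from the classical Ellis structure of equicontinuous minimal systems, with commutativity supplying inverses. The converse is where the semigroup setting causes genuine trouble.

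\emph{Forward direction.} Assume $Y$ is equicontinuous. Let $E = E(Y)$ be its enveloping semigroup, the pointwise closure of $S$ in $Y^Y$.
\begin{enumerate}
  \item By equicontinuity, every element of $E$ is continuous and $E$ is an equicontinuous family. Since $S$ is commutative and every element of $E$ is continuous, limits of commuting maps commute, so $E$ is commutative.
  \item Take an idempotent $u$ in $E$. Its fixed set $\{y \mid uy = y\}$ is closed, nonempty (it contains $uY$), and $S$-invariant because $u$ commutes with $S$. By minimality of $Y$ (a factor of the minimal $X$), the fixed set is all of $Y$, so $u = \mathrm{id}$.
  \item Hence the identity lies in every minimal ideal of $E$, so $E$ is itself a minimal ideal of a commutative semigroup with identity. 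Therefore $E$ is a group: every $s \in S$ has some $t \in E$ with $ts = \mathrm{id}$.
  \item Let $(x,y) \in \RP$ and let $\beta$ be a neighborhood of the diagonal in $Y^2$. Choose $\alpha$ with $e\alpha \subseteq \beta$ for all $e \in E$. The set $(\pi\times\pi)^{-1}\alpha$ is a neighborhood of the diagonal in $X^2$. Regional proximality gives $x'$ near $x$, $y'$ near $y$ and $s \in S$ with $(s\pi x', s\pi y') \in \alpha$.
  \item Applying $t$ with $ts = \mathrm{id}$ gives $(\pi x', \pi y') \in \beta$. Letting $x' \to x$ and $y' \to y$ shows $(\pi x, \pi y) \in \overline{\beta}$ for every $\beta$, so $\pi x = \pi y$.
\end{enumerate}

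\emph{Converse.} Assume $\RP \subseteq R_\pi$ and suppose $Y$ is not equicontinuous.
\begin{enumerate}
  \item Failure of equicontinuity gives a closed neighborhood $\beta$ of the diagonal, points $y_1^i, y_2^i \to z$, and $s_i \in S$ with $(s_i y_1^i, s_i y_2^i) \to (w_1, w_2)$, where $w_1 \neq w_2$. This is a ``backward'' regional proximality of $(w_1,w_2)$ in $Y$: nearby points are sent far apart, rather than points near $w_1, w_2$ being sent close together.
  \item To reach a contradiction, I would produce a forward $\RP$ pair in $X$ lying over $(w_1,w_2)$. The tool is the enveloping semigroup $E(X)$ together with a minimal idempotent $u$. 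Since $S$ is commutative and $X$ is minimal, each $s$ acts surjectively ($sX$ is closed and invariant), and $uE(X)$ is a group. So there are $t_i \in E(X)$ with $t_i s_i u = u$.
  \item Lift $w_1, w_2$ to points $a, b \in uX$. Using continuity of $s_i$ and passing to subnets by compactness, I would arrange points $a_i \to a$ and $b_i \to b$ whose images under suitable elements approximating $t_i$ (taken from $S$) lie over points near $z$, hence are close in $Y$.
  \item To transfer this closeness back to $X$, I would work in $X$ throughout: replace the $Y$-points $y^i_j$ by lifts chosen inside a single fibre-compatible net, then take limits.
  \item This would exhibit $(a,b) \in \RP_X$ with $\pi a = w_1 \neq w_2 = \pi b$, contradicting $\RP \subseteq R_\pi$.
\end{enumerate}

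\emph{Main obstacle.} The hardest step is converting the ``backward'' witness in $Y$ (close points with far images) into a ``forward'' $\RP$ witness in $X$. Without group inverses, closeness in $Y$ does not lift to closeness in $X$. My first attempt would be to prove, as a lemma for minimal commutative actions, that $\RP$ coincides with its backward variant, using the group $uE(X)$ to invert the maps $s_i$ up to $u$.
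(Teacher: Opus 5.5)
\textbf{Forward direction.} This part is correct. Showing that $E(Y)$ is a commutative group of continuous maps (minimality forces the only idempotent to be $\mathrm{id}$) amounts to proving directly that $Y$ is backward equicontinuous. The paper gets the same fact from \cref{thm_main_equi_backward_equi_thm} combined with \cref{lemma_rp_and_equicontinuity} and \cref{lemma_frp_functorial_property}.

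\textbf{Converse: the gap.} The converse has a genuine gap at steps 3--4, which are not an argument. Step 1 correctly gives $(w_1,w_2)\in\RP^-_Y\setminus\Delta_Y$. But if you lift $y^i_1,y^i_2\to z$ to points of $X$, the lifts converge (along a subnet) to points $c_1,c_2$ of the fibre $\pi^{-1}z$, and in general $c_1\neq c_2$. So they are not close in $X$ and witness nothing. No ``fibre-compatible'' choice repairs this. To get lifts inside a small neighborhood $\beta$ of $\Delta_X$ you would need $(y^i_1,y^i_2)\in(\pi\times\pi)\beta$. Since $\pi$ need not be open (here not even semiopen), $(\pi\times\pi)\beta$ need not be a neighborhood of $(z,z)$. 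This obstacle is not about missing inverses; it already occurs for group actions. Your proposed lemma $\RP_X=\RP^-_X$ does not touch it: even granting it, you still need the lifting statement $\RP^-_Y\subseteq(\pi\times\pi)\RP^-_X$. Moreover, elements $t_i\in E(X)$ are not continuous, so multiplying by them does not transport closeness.

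\textbf{What the paper uses instead.} The paper supplies two ideas you are missing.
\begin{enumerate}
\item For a minimal commutative action, every pair in $\RP^-_Y$ satisfies $\Delta_Y\subseteq\overline{S^{-1}U}$ for \emph{every} open $U$ containing it (\cref{lemma_frp_iff_nbhd_tran_int_contains_diagonal}). In other words, the base point $z$ can be moved anywhere along the diagonal. This is proved by showing that the trace of $\overline{SU}$ on $\Delta$ is $S$-invariant. That step uses density of $S$-uniformly recurrent pairs (\cref{lemma_dense_S_minimal_points_in_X2}) and \cref{lemma_forward_backward_set_orb_clos_coincide}, which give $\overline{SsU}=\overline{SU}$.
\item A Baire-category substitute for semiopenness (\cref{lemma_image_of_entourage_somewhere_dense_along_diagonal}): for every open $\beta\supseteq\Delta_X$, $(\pi\times\pi)\beta\supseteq V\times V$ for some non-empty open $V\subseteq Y$.
\end{enumerate}
Argue by contrapositive: suppose no point of the fibre over $(w_1,w_2)$ lies in $\RP^-_X$. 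Compactness of that fibre yields an open $U\ni(w_1,w_2)$ and an open $\beta\supseteq\Delta_X$ with $S^{-1}U\cap(\pi\times\pi)\beta=\emptyset$. Hence $S^{-1}U\cap(V\times V)=\emptyset$, contradicting the first fact. This gives $(\pi\times\pi)\RP^-_X=\RP^-_Y$ (\cref{lemma_rp_functorial_property_minimal}). The converse then follows from $\RP=\RP^-$ (\cref{mainthm_RP_is_backward_RP}, again via the Bronstein condition) and \cref{lemma_rp_and_equicontinuity}.
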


Group actions -- being actions by homeomorphisms -- act by surjective, open maps.  This fact leads to a simple proof of \cref{mainthm_equi_icer_iff_contains_rp} in the case that $S$ is a group; see \cite[V. 2.12 and 2.14]{devries_book_1993}.   The maps in a minimal semigroup action, on the other hand, need not be surjective (\cite[Ex. 1.7]{auslander_dai_2019}) and need not even be semiopen.\footnote{It is asserted without example in \cite[Pg. 4]{dai_feng_lv_xie_2026} that a minimal semigroup action need not act by semiopen maps.  Non-commutative examples are simple to construct. Indeed, let $f: \T \to \T$ be any surjective, continuous map that sends a non-trivial interval to a point, and let $g: \T \to \T$ be an irrational rotation.  Let $S$ be the semigroup of continuous surjections generated by $f$ and $g$.  Then $S$ acts on $\T$ minimally by continuous surjections but does not act by semiopen maps since $f$ is not semiopen.  We do not know whether such an example exists with $S$ commutative.}
Therefore, the proof of \cref{mainthm_equi_icer_iff_contains_rp} requires some new ideas, among which the most applicable outside the scope of this work is likely \cref{lemma_forward_backward_set_orb_clos_coincide}.\\

Our second main result is a generalization of Fact \ref{item_rp_fact_five} from \cref{sec_rp_and_seq}.  Apparently, it is new even for systems generated by a single, non-invertible map, that is, $S = (\N,+)$.

\begin{Maintheorem}
\label{mainthm_RP_is_eq_relation}
\lean{RPisEquiStructureRelation}
    Let $S$ be a commutative semigroup and $X$ be a minimal $S$-system.  The regionally proximal relation coincides with the equicontinuous structure relation:
    \[\RP = \seq.\]
    In particular, $\RP$ is an equivalence relation, and $X \to X/\RP$ is the largest equicontinuous factor of $X$.
\end{Maintheorem}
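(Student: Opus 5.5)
Since $\RP \subseteq \seq$ will follow from \cref{mainthm_equi_icer_iff_contains_rp} once $\seq$ is known to give an equicontinuous factor, the substance of \cref{mainthm_RP_is_eq_relation} is the reverse inclusion $\seq \subseteq \RP$. Equivalently, it is the statement that $\RP$ is transitive. My plan has three steps.

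\emph{Step 1: the semigroup analogue of Fact~\ref{item_rp_fact_four}.} I will first show that $\seq$ is a closed, $S$-invariant equivalence relation and that $X/\seq$ is an equicontinuous factor. I intend to realize $X/\seq$ as the image of $X$ in the product of all equicontinuous factors, taken up to isomorphism so that we have a set. A product of equicontinuous systems is equicontinuous, and so is a subsystem, so this image is equicontinuous. Applying \cref{mainthm_equi_icer_iff_contains_rp} to this factor then gives $\RP \subseteq \seq$.

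\emph{Step 2: reduce to showing that $\RP$ is a closed invariant equivalence relation.} Suppose $\RP$ is transitive. It is clearly reflexive, symmetric, closed and invariant, so $X/\RP$ is compact Hausdorff by \cref{lemma_quotient_is_compact_hausdorff}, and the quotient map $\pi\colon X \to X/\RP$ is a factor map. Its kernel contains $\RP$, so by \cref{mainthm_equi_icer_iff_contains_rp} the factor $X/\RP$ is equicontinuous. Hence $\seq \subseteq \RP$.

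\emph{Step 3: transitivity of $\RP$, the main obstacle.} I would follow Veech's approach for abelian groups, using the enveloping semigroup or the Stone--\v{C}ech compactification $\beta S$ and minimal idempotents. The key claim is a characterization for minimal $X$ with $S$ commutative: $(x,y)\in\RP$ if and only if for every neighborhood $V$ of $y$ there are $x'$ and $p$ in the minimal ideal of $\beta S$ with $p x' = x$ and $p$-translates landing in $V$. Concretely, I aim to prove
\[
\RP[x] = \bigcap_{V \ni x} \overline{\{\, y : \exists\, s,\ sy \in \ldots \}},
\]
which I would attempt via the fact that for any $u$ in the minimal ideal, $(x,y) \in \RP$ if and only if $(x,y)$ is in the closure of $\{(sx, sy')\}$ with $y'$ proximal-related. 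In the semigroup setting, the maps need not be surjective or open. I expect to handle this using \cref{lemma_forward_backward_set_orb_clos_coincide}, which identifies forward and backward orbit closures, so that the commutative-group argument based on ``$s^{-1}$ of an open set is nonempty'' transfers.

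An alternative route for Step 3, which I would try if the above stalls, is a natural extension. When $S$ acts by surjections, which minimality plus commutativity may force on the minimal subset where the images $sX$ are nested, I would pass to the inverse-limit system, on which the group of quotients $S^{-1}S$ acts minimally. Veech's theorem holds there, and I would then push $\RP$ down, using that $\RP$ of the extension maps onto $\RP$ of $X$. Either way, the hard point is the transitivity in Step 3.
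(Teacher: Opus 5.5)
Your Steps 1 and 2 are exactly the paper's reduction (\cref{lemma_seq_is_equi_and_largest} together with \cref{lemma_rp_characterization_of_equicont_icer}): once $\RP$ is known to be transitive, both inclusions are formal. The gap is Step 3. It is the whole content of the theorem, and the proposal contains no argument for it. Your ``key claim'' is not a well-formed statement: the displayed set literally contains ``$\ldots$'', and ``$p$-translates landing in $V$'' is never defined. Even granting some such pointwise characterization, you do not say how transitivity would follow from it. Appealing to \cref{lemma_forward_backward_set_orb_clos_coincide} also does not say what replaces group inverses. That lemma only identifies $\overline{AW}$ with $\overline{B^{-1}W}$ for open $W\subseteq X^2$, and gives no handle on inverse images of individual points, which a group argument applying $t^{-1}$ to points would need. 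The idea you are missing is the paper's corner space: $X^3$ with $S^2$ acting by $(s,t)(x,y,z)=(sx,sty,tz)$. The paper proceeds in three steps:
\begin{enumerate}
\item The points $(x,y,y)$ and $(x,x,y)$ are always $S^2$-uniformly recurrent (\cref{thm_xxx_is_trtl_unif_rec_v3}).
\item $(x,y)\in\RP$ if and only if $(x,y,y)\in\overline{S^2(y,y,y)}$ (\cref{thm_rp_implies_same_minimal_component}). The hard direction uses the ultrafilter description of $\RP^-$ from \cref{lemma_rp_implies_ultra} and the idempotent lifting of \cref{lemma_ultrafilter_idempotent_lift}.
\item Transitivity then follows by chaining orbit closures (\cref{theorem_rp_is_transitive}).
\end{enumerate}
Nothing in your Step 3 plays this role.

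Your natural-extension fallback can be made to work, but as written its last step fails. The image under $\pi\times\pi$ of an equivalence relation need not be transitive: $(x,y)$ and $(y,z)$ may lift to $(w_1,w_2)$ and $(w_3,w_4)$ in $\RP_W$ with $w_2\neq w_3$ in the fiber over $y$. You need $R_\pi\subseteq\RP_W$. This does hold. If $\varphi(e_{\gro(S)})=\psi(e_{\gro(S)})$, then for any finite $F\subseteq\gro(S)$, choose $s$ with $F\,\igro(s)\subseteq\igro(S)$ (\cref{thm_gro_group_props}). Then $s\varphi$ and $s\psi$ agree on $F$, so $(\varphi,\psi)$ is proximal and hence in $\RP_W$. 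The route needs several further ingredients:
\begin{enumerate}
\item $\RP_{W,S}=\RP_{W,\gro(S)}$. Veech's theorem makes the $\gro(S)$-relation transitive, but you push down the $S$-relation. This equality is \cref{mainthm_same_rp_for_comm_actions}, which rests on $\RP=\RP^-$ (\cref{mainthm_RP_is_backward_RP}).
\item $\RP_X=(\pi\times\pi)\RP_W$. Without open maps this is not automatic; it is \cref{lemma_rp_functorial_property_minimal}, which uses \cref{lemma_image_of_entourage_somewhere_dense_along_diagonal} as a substitute for semiopenness.
\item The acting group must be $\gro(S)$, not a group of quotients containing $S$, since $S$ need not be cancellative. \cref{thm_igro_same_implies_elts_act_same} ensures the action factors through $\igro$.
\item Surjectivity holds on all of $X$ by \cref{lemma_minimal_comm_action_implies_surjective}, so no passage to a subset is needed.
\end{enumerate}
None of these depends on \cref{mainthm_RP_is_eq_relation}. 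Assembled, they would give a valid proof that outsources transitivity to Veech's theorem. However, the fiber condition and the comparison of $\RP_{W,S}$ with $\RP_{W,\gro(S)}$ are exactly where the work lies, and your proposal does not identify them.
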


An oft-cited example of McMahon \cite[Ex. 1.8]{mcmahon_1976} (cf. \cite[V(1.8)2]{devries_book_1993}) details a minimal action of the free group on two generators for which $\RP$ is not transitive, demonstrating that we cannot expect the conclusions of \cref{mainthm_RP_is_eq_relation} to hold without some assumptions on the semigroup $S$.
The regionally proximal relation is known to be an equivalence relation for minimal actions by discrete amenable groups \cite[Ch. 9, Thm. 8]{auslander_book_1988}, and there are generalizations of $\RP$ known to be equivalence relations for minimal actions of arbitrary discrete groups \cite{glasner_gutman_ye_2018}.
We do not know to what extent the conclusions of \cref{mainthm_RP_is_eq_relation} hold for more general semigroups $S$.

The proof of \cref{mainthm_RP_is_eq_relation} goes by way of a modern standard for relating higher-order regionally proximal dynamical behavior and nilpotent algebraic structure: the dynamics of cube spaces.  We follow Glasner, Gutman, and Ye's recent paper \cite{glasner_gutman_ye_2018}; specifically, following their Theorem 4.10 and its preliminaries, we generalize in some places, specialize in others, and offer some simplifications.
For the purposes of simplifying the Lean verification of our results, we use ultrafilters as an alternative to both enveloping semigroups and the theory of nets.\\

We end this subsection by highlighting a technical result that is useful for us in the proofs of all of the other main theorems.  There are two natural candidates for generalizing the regionally proximal relation from group actions to semigroup actions.  One is as it is defined above in \cref{def:rp_forward}.  The other is obtained by following the definition of the regionally proximal relation for group actions in de Vries \cite[V. 1]{devries_book_1993}: we define the \emph{backward regionally proximal relation} as
\[
    \RP^- \defeq \bigcap_\alpha \overline{\bigcup_{s \in S} s \alpha},
\]
where the intersection is over all open sets $\alpha \subseteq X^2$ containing the diagonal.
Thus, $(x,y) \in \RP^-$ if and only if for all open sets $U \subseteq X$ containing $x$, $V \subseteq X$ containing $y$, and $\alpha \subseteq X^2$ containing the diagonal, there exists $(w,w') \in \alpha$ and $s \in S$ with $(sw,sw') \in U \times V$.

For group actions, it is immediate that $\RP$ and $\RP^-$ coincide, but a simple example (\cref{example_rp_not_bacward_rp}) shows that they need not be the same for semigroup actions, even when $S = (\N,+)$.  Nevertheless, our next theorem shows that when $S$ is commutative and $X$ is minimal, the relations $\RP$ and $\RP^-$ coincide.

\begin{Maintheorem}
\label{mainthm_RP_is_backward_RP}
\lean{forwardEqualsBackwardRPInMinCommSystem}
    Let $S$ be a commutative semigroup and $X$ be a minimal $S$-system.  The regionally proximal relation and the backward regionally proximal relation coincide: $\RP = \RP^-$.
\end{Maintheorem}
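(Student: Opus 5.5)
The plan is to get each inclusion from a ``reversal'' argument that turns the forward condition $(sx',sy')\in\alpha$ into the backward condition $(sw,sw')\in U\times V$ and vice versa. Minimality and commutativity supply the approximate inverses this needs.

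\textbf{Preliminary observation.} Each $s\in S$ acts surjectively. Indeed, $sX$ is compact and nonempty, and by commutativity $t(sX)=s(tX)\subseteq sX$ for every $t\in S$. So $sX$ is a closed invariant set, and minimality forces $sX=X$. Consequently, for any nonempty open $W\subseteq X$ we have $\bigcup_{s}s^{-1}W=X$ (every orbit enters $W$), and also $s^{-1}W\neq\emptyset$ for each $s$. This replaces the use of homeomorphisms in the group case.

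\textbf{Step 1: $\RP^-\subseteq\RP$.} Fix $(x,y)\in\RP^-$, neighborhoods $U\ni x$, $V\ni y$, and an entourage $\alpha$. I will pass to ultrafilters, as the paper does in place of nets. Take $(w_i,w_i')\to(z,z)$ and $s_i$ with $s_iw_i\to x$ and $s_iw_i'\to y$, and let $p\in\beta S$ be the ultrafilter limit of the $s_i$.
\begin{itemize}
\item By minimality choose $t\in S$ with $t x\in U'$, where $U'$ is a small neighborhood of $z$.
\item The candidate witnesses are $x'=s_iw_i\in U$ and $y'=s_iw_i'\in V$, acted on by $t$.
\item By commutativity, $t s_i w_i=s_i(tw_i)$.
\end{itemize}
The difficulty is that $t$ is chosen after the $w_i$. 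To circumvent this I will prove a lemma in the spirit of \cref{lemma_forward_backward_set_orb_clos_coincide}. For every closed set $A\subseteq X^2$ meeting the diagonal, the closures of $\bigcup_s sA$ and $\bigcup_s s^{-1}A$ meet the diagonal's neighborhoods in the same way. Formally:
\[
\textstyle\bigcap_\alpha\overline{\bigcup_s s\alpha}=\bigcap_\alpha\overline{\bigcup_s s^{-1}\alpha}.
\]
To prove the lemma, take a minimal idempotent $u\in\beta S$. Since $S$ is commutative, $S$ lies in the center of $\beta S$, so elements of $S$ commute with $u$ and with $K(\beta S)$. The aim is to show that for $x$ with $ux=x$, every $s$ has an ``inverse along $u$'': some $q\in K(\beta S)$ with $qsx=x$, and more generally $qs$ acts as $u$ on $uX$.

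\textbf{Step 2: $\RP\subseteq\RP^-$.} This is the symmetric argument. Given $(s_ix_i,s_iy_i)\to(z,z)$ with $x_i\to x$ and $y_i\to y$, set $w_i=s_ix_i$ and $w_i'=s_iy_i$, which are close to each other. I then need $r$ with $rw_i\in U$ and $rw_i'\in V$, that is, $rs_i$ approximately the identity near $(x,y)$. Here the plan is again to use the central elements $s_i$ together with $q\in K(\beta S)$ satisfying $qs_i=u$ on the relevant points. Then $q$ is approximated by elements $r\in S$ and continuity of the individual maps finishes the argument. Before doing this I first reduce to points $x,y$ that are simultaneously $u$-fixed, using that $\RP$ and $\RP^-$ are closed and $S$-invariant and that $uX$ is dense.

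\textbf{Main obstacle.} The crux is the inversion lemma: for a single minimal idempotent $u$ and a pair $(x,y)$, producing $q$ with $qs$ acting as the identity on both coordinates at once, with quantifiers ordered so that $q$ depends only on $s$ and $u$ and not on the approximating points. I would first try to show that $K(\beta S)u$ is a group containing $us$ for every $s\in S$ (using centrality of $S$), and take $q=(us)^{-1}$ in that group. Handling pairs then requires working in the product system $X^2$, where minimality fails. So I expect to need that $u$ fixes both $x$ and $y$, which is the delicate point.
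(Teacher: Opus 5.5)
There is a genuine gap, and it is the point you flag as delicate, though you place it at the wrong points.

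First, the identity you set out to prove in Step 1, $\bigcap_\alpha\overline{\bigcup_s s\alpha}=\bigcap_\alpha\overline{\bigcup_s s^{-1}\alpha}$, is by definition $\RP^-=\RP$. So everything rests on the inversion lemma. That lemma is the easy part. Since $S$ is central in $\beta S$, $us=usu$ lies in the group $u(\beta S)u$. (It does not lie in a group called $K(\beta S)u$: that set is the minimal left ideal containing $u$, which is in general not a group.) Taking $q=(us)^{-1}$ in $u(\beta S)u$ gives $qs=qus=u$.

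The trouble is what you apply it to. The map $z\mapsto uz$ is in general not continuous, so $u(x,y)=(x,y)$ says nothing about how $u$ acts on nearby points. The points that actually need to return are the witnesses, and they are chosen only after $s$ is fixed. They are points of the open sets $(U\times V)\cap s^{-1}\alpha$ (for $\RP\subseteq\RP^-$) or $\alpha\cap s^{-1}(U\times V)$ (for $\RP^-\subseteq\RP$), and these sets need not contain $(x,y)$. So your reduction to $u$-fixed $(x,y)$ would not help even if it were available. It is also not justified: density of $uX$ in $X$ gives no density of $u$-fixed pairs inside $\RP$. Since $X^2$ is not minimal, nothing in your proposal guarantees that these open sets contain any point fixed by a minimal idempotent at all.

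The missing idea is \cref{lemma_dense_S_minimal_points_in_X2}: for $S$ commutative and $X$ minimal, the $S$-uniformly recurrent points of $X^2$ are dense under the diagonal action. Equivalently, points fixed by some minimal idempotent are dense. This is where commutativity is really used:
\begin{enumerate}
\item Take a minimal set $Z\subseteq X^2$. Its first projection is all of $X$, so some $(a,b)\in Z$ has $a\in U$.
\item Pick $s$ with $sb\in V$.
\item The map $(w,z)\mapsto(w,sz)$ commutes with the diagonal action, so it sends $Z$ to a minimal set containing $(a,sb)\in U\times V$.
\end{enumerate}

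With this lemma, each inclusion is immediate. Suppose $(x,y)\in\RP$. Pick $s$ with $(U\times V)\cap s^{-1}\alpha\neq\emptyset$, and pick a uniformly recurrent $(a,b)$ in that set. Then $(a,b)\in\overline{Ss(a,b)}$: the syndetic return times meet the thick set $Ss$, or in your language, $qs(a,b)=u(a,b)=(a,b)$ for a minimal idempotent $u$ fixing $(a,b)$. So some $t\in S$ gives $ts(a,b)\in(U\times V)\cap t\alpha$, whence $(x,y)\in\overline{S\alpha}$. The other inclusion is symmetric. This is exactly the paper's route through \cref{lemma_forward_backward_set_orb_clos_coincide}, which gives $\overline{SW}=\overline{S^{-1}W}$ for every nonempty open $W\subseteq X^2$.
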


We develop results for $\RP^-$ in parallel with those for $\RP$ in \cref{sec_rp_for_semigp_actions}; some are useful in this paper, and the others are recorded for future reference.
We invite the curious reader to explore potentially weaker sufficient conditions for $\RP$ and $\RP^-$ to coincide than those assumed in \cref{mainthm_RP_is_backward_RP}.

\subsection{Regional proximality under semigroup and group actions}

A homeomorphism $T: X \to X$ of a compact Hausdorff space $X$ generates both an action of $\N$ and an action of $\Z$ on $X$.  With an established notion of regional proximality for both semigroup and group actions, it is natural to ask how the notions differ, if it all, between these actions.  For example, if a pair of points in $X$ are regionally proximal under the $\N$-action, then they are obviously regionally proximal under the $\Z$-action.  But what about the converse?

If $T: X \to X$ is surjective but not invertible, then it only generates an action of $\N$ on $X$ by continuous surjections.  In this case, the natural extension of $X$, $\pi: W \to X$, is an $\N$-system $W$ on which $\N$ acts by homeomorphisms.  If a pair of points in $W$ are regionally proximal, then it is easy to see that the pair of their images under $\pi$ are regionally proximal in $X$.  But what about the converse?  It is a closely related question to ask how the largest equicontinuous factors of $W$ and $X$ compare.

These questions are the most basic cases of ones that we treat in the last part of this paper.  The more general framing begins with a semigroup homomorphism $i: S \to G$ from a semigroup $S$ into a group $G$.  On the one hand, any $G$-system can be considered an $S$-system by defining the action of $s \in S$ to be the action of its image $is$.  In certain cases, it is possible to go in the other direction, starting with an $S$-system and creating from it a $G$-system.  In both cases, the basic question is the same: how does regional proximality under the actions of $S$ and $G$ compare?

Our first main result in this direction shows that the sets of regionally proximal pairs under commuting minimal actions are the same.

\begin{Maintheorem}
\label{mainthm_same_rp_for_comm_actions}
\lean{forTwoMinCommActionsRPsAreSame}
    Let $S$ and $T$ be commutative semigroups.  Let $X$ be both a minimal $S$-system and a minimal $T$-system.  If the actions of $S$ and $T$ commute, then $\RP_S = \RP_T$.
\end{Maintheorem}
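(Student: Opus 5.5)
The plan is to compare both relations with the regionally proximal relation of the combined action. Let $S \times T$ act on $X$ by $(s,t)x \defeq s(tx)$. This is an action of a commutative semigroup because the two actions commute. It is also minimal: for $x \in X$ and a fixed $t \in T$, the $S$-orbit of $tx$ is dense and is contained in the $(S\times T)$-orbit of $x$. I will show $\RP_S = \RP_{S\times T}$. By symmetry $\RP_T = \RP_{S\times T}$ as well, which proves the theorem.

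\emph{Step 1: $\RP_S \subseteq \RP_{S\times T}$.} Fix any $t_0 \in T$. Since $t_0$ is uniformly continuous on the compact space $X$, for every open $\alpha \supseteq$ diagonal there is an open $\alpha' \supseteq$ diagonal with $t_0\alpha' \subseteq \alpha$. Suppose $(x,y)\in\RP_S$ and $U \ni x$, $V \ni y$ are open. Choose $x'\in U$, $y'\in V$ and $s \in S$ with $(sx',sy')\in\alpha'$. Then $((s,t_0)x',(s,t_0)y') = t_0(sx',sy') \in \alpha$, so $(x,y) \in \RP_{S\times T}$. The same uniform continuity argument shows that $\RP_S$ is $T$-invariant. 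Indeed, given neighborhoods of $tx$ and $ty$, pull them back under $t$ to neighborhoods of $x$ and $y$, use $(x,y)\in\RP_S$ with a smaller $\alpha'$, and apply $t$, which commutes with $s$.

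\emph{Step 2: $\RP_{S\times T}\subseteq \RP_S$.} By \cref{mainthm_RP_is_eq_relation}, $\RP_S$ is a closed $S$-invariant equivalence relation, and $Y \defeq X/\RP_S$ is the largest $S$-equicontinuous factor of $X$. By Step 1, $\RP_S$ is also $T$-invariant, so $T$ acts on $Y$ and the two actions on $Y$ commute. If $Y$ is $(S\times T)$-equicontinuous, then \cref{mainthm_equi_icer_iff_contains_rp}, applied to the minimal $(S\times T)$-system $X$ and the factor $X\to Y$, gives $\RP_{S\times T}\subseteq\RP_S$.

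\emph{Step 3: $Y$ is $(S\times T)$-equicontinuous.} This is the main obstacle. Let $E$ be the set of pointwise limits along ultrafilters on $S$ of the maps $s : Y \to Y$.
\begin{itemize}
\item Since $Y$ is $S$-equicontinuous, these limits are uniform limits. Hence $E$ is a uniformly equicontinuous, compact family of continuous maps.
\item $E$ is commutative, being a limit of the commutative family $S$.
\item $E$ acts transitively on $Y$, by minimality.
\end{itemize}
Now fix $y_0\in Y$ and $t\in T$. By transitivity, pick $p\in E$ with $ty_0 = py_0$. For every $s\in S$,
\[ t(sy_0) = s(ty_0) = s(py_0) = p(sy_0), \]
where the first equality uses that the actions commute and the last uses that $p$ commutes with $S$. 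Since $S$ commutes with $p$ by continuity of the maps $s$, and $\{sy_0\}$ is dense, the continuous maps $t$ and $p$ agree on all of $Y$. Hence every $t$ lies in $E$. Moreover $E$ is closed under composition, and each element $(s,t)$ of $S \times T$ acts on $Y$ as the composition of $s$ and $t$. So every map coming from $S\times T$ lies in the equicontinuous family $E$, and $Y$ is $(S\times T)$-equicontinuous.

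The delicate points are the closure of $E$ under composition and the passage from pointwise to uniform limits. Both follow from equicontinuity, using ultrafilter limits in the way developed earlier in the paper.
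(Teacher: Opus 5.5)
Your proof is correct, but it takes a genuinely different route from the paper's.

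\textbf{The paper's route.} The paper argues directly at the level of open sets. By \cref{mainthm_RP_is_backward_RP} it suffices to show $\RP_S^- \subseteq \RP_T^-$. Given $(x,y) \in \RP_S^-$, an open $\alpha \supseteq \Delta$ and an open $W \ni (x,y)$, it picks $s \in S$ and non-empty open $U \times V \subseteq s^{-1}W \cap \alpha$. It then uses $S$-minimality to get $\ell \in S$ with $Y = U \cap \ell^{-1}V \neq \emptyset$, and $T$-minimality to get $t \in T$ with $s^{-1}Y \cap t^{-1}Y \neq \emptyset$. Finally it checks that for $z \in Y \cap t^{-1}sY$ the pair $(z,\ell z)$ lies in $\alpha \cap t^{-1}W$. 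This is an adaptation of Shao--Xu.

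\textbf{Your route.} You pass through the maximal equicontinuous factor instead. You use the $T$-invariance of $\RP_S$, and \cref{mainthm_RP_is_eq_relation} to know that $X/\RP_S$ is an $S$-equicontinuous factor. You then observe that each $t \in T$ acts on it as an element of the Ellis semigroup $E$ of the $S$-action; your density argument using $p \circ s = s \circ p$ and continuity of $p$ is sound. Finally \cref{mainthm_equi_icer_iff_contains_rp} converts equicontinuity under the larger action into the reverse inclusion. There is no circularity, since \cref{mainthm_equi_icer_iff_contains_rp} and \cref{mainthm_RP_is_eq_relation} are proved without \cref{mainthm_same_rp_for_comm_actions}.

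\textbf{Trade-off.} The paper's argument is elementary and local. It needs only \cref{mainthm_RP_is_backward_RP} (ultimately, density of uniformly recurrent pairs) and none of the corner-space machinery behind \cref{mainthm_RP_is_eq_relation}. Yours leans on the paper's deepest result. In exchange it explains the phenomenon structurally: a commuting minimal action necessarily acts on the $S$-Kronecker factor through the Ellis semigroup of $S$.

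\textbf{Two simplifications.} First, closure of $E$ under composition needs no equicontinuity. $E$ is the image of the semigroup $\beta S$ under $p \mapsto (y \mapsto py)$, and $(pq)y = p(qy)$. Second, the detour through $S \times T$ and the first half of your Step 1 are unnecessary. Step 3 already shows that $X/\RP_S$ is a $T$-equicontinuous factor of the minimal $T$-system $X$. So \cref{mainthm_equi_icer_iff_contains_rp} applied to $T$ gives $\RP_T \subseteq \RP_S$ directly, and symmetry finishes.
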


We prove \cref{mainthm_same_rp_for_comm_actions} in \cref{sec_proof_of_comm_actions_thm}.
The result is known in the group-action setting; see, for example, \cite[Thm. 1.1]{shao_xu_2026}.
Our proof of \cref{mainthm_same_rp_for_comm_actions} follows their ideas with the necessary adaptations for semigroup actions.
As a first application, we see that if we generate an $S$-system from a $G$-system via the homomorphism $i: S \to G$, regional proximality under the actions of $S$ and $G$ are the same provided that $G$ is abelian and $S$ acts minimally.\\

To address the other questions, we are led to develop the machinery of natural extensions for surjective actions of commutative semigroups $S$.
In this setting, for every $S$-system $X$ we construct a smallest $S$-system extension $\pi: W \to X$ on which $S$ acts by homeomorphisms; see \cref{sec_nat_ext} for the precise definitions.

\begin{Maintheorem}
\lean{natExtOfMinimalCommSystem}
\label{mainthm_nat_ext}
    Let $S$ be a commutative semigroup and $X$ be a minimal $S$-system.
    There exists a minimal $S$-system $W$ on which $S$ acts by homeomorphisms and a factor map $\pi: W \to X$ with the property that for all extensions $\xi: V \to X$ of $S$-systems in which $S$ acts on $V$ by homeomorphisms, there exists a factor map $\rho: V \to W$ such that $\xi = \pi \circ \rho$.
\end{Maintheorem}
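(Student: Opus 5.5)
The plan is to build $W$ as an inverse limit indexed by $S$ itself, exploiting commutativity throughout.

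\emph{Step 1: every $s \in S$ acts surjectively on $X$.} For $s \in S$, the set $sX$ is compact, hence closed, and nonempty. It is also $S$-invariant, because commutativity gives $t(sX) = s(tX) \subseteq sX$ for every $t \in S$. By minimality, $sX = X$.

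\emph{Step 2: construction of $W$ and $\pi$.} Define
\[
    W \defeq \bigl\{ (x_s)_{s \in S} \in X^S \;\bigm|\; t x_{st} = x_s \text{ for all } s,t \in S \bigr\}.
\]
This is a closed subset of $X^S$, hence compact Hausdorff. Let $u \in S$ act on $W$ by $(uw)_s \defeq u x_s$; this preserves $W$ and is continuous.

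The action is by homeomorphisms. The map $w \mapsto (x_{su})_{s \in S}$ lands in $W$, since $t x_{sut} = x_{su}$. It is a two-sided inverse of the action of $u$:
\begin{itemize}
    \item $u x_{su} = x_s$, which is the defining relation of $W$;
    \item $(uw)_{su} = u x_{su} = x_s$, again by the defining relation.
\end{itemize}

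Define $\pi(w) \defeq s x_s$. This is independent of $s$, because
\[
    s x_s = s t x_{st} = t s x_{ts} = t x_t .
\]
The map $\pi$ is continuous and equivariant, since $s(u x_s) = u(s x_s)$.

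Surjectivity of $\pi$ (and nonemptiness of $W$) comes from compactness. Fix $x \in X$ and a finite set $F \subseteq S$, and let $p$ be the product of the elements of $F$. By Step 1, choose $y$ with $py = x$. Then set $x_s \defeq (p/s)\, y$ for $s \in F$, where $p/s$ is the product of the remaining factors. This satisfies the defining relations within $F$ and has $s x_s = x$. The finite intersection property then gives a point of $\pi^{-1}(x)$.

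\emph{Step 3: minimality of $W$.} This is the step I expect to be the main obstacle. Let $M \subseteq W$ be nonempty, closed and $S$-invariant. Each coordinate projection $p_s(M)$ is closed, and it is $S$-invariant since $(uw)_s = u x_s$; by minimality of $X$, $p_s(M) = X$.

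Now take a basic open set of $W$, which constrains only the coordinates in a finite set $F$. With $p$ the product of $F$, every coordinate $x_s$ for $s \in F$ equals $(p/s)\, x_p$, a continuous function of $x_p$ alone. So the basic open set is the preimage under $p_p$ of an open subset of $X$, and it is nonempty exactly when that subset is. Because $p_p(M) = X$, the set $M$ meets every nonempty basic open set. Thus $M$ is dense, and being closed, $M = W$.

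\emph{Step 4: the universal property.} Given $\xi : V \to X$ with $S$ acting on $V$ by homeomorphisms, define
\[
    \rho(v) \defeq \bigl( \xi(s^{-1} v) \bigr)_{s \in S}.
\]
It lies in $W$, since $t\, \xi(s^{-1} t^{-1} v) = \xi(s^{-1} v)$. It is continuous, and it is equivariant because $s^{-1}$ commutes with $u$. Moreover
\[
    \pi \rho(v) = s\, \xi(s^{-1} v) = \xi(v).
\]
Finally, the image $\rho(V)$ is a nonempty, closed, $S$-invariant subset of $W$, so by Step 3 it equals $W$. Hence $\rho$ is a factor map with $\xi = \pi \circ \rho$.
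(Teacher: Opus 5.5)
Your proof is correct, and it takes a genuinely more elementary route than the paper. You index by $S$ itself, building the inverse limit of copies of $X$ over the divisibility order with bonding maps $t\colon X_{st}\to X_s$. The paper instead takes functions $\varphi\colon\gro(S)\to X$ with $\varphi((\igro s)g)=s\varphi(g)$. The two spaces are isomorphic via $\varphi\mapsto(\varphi(\igro(s)^{-1}))_{s\in S}$. Under this map the paper's $\pi(\varphi)=\varphi(e_{\gro(S)})$ becomes your $s x_s$, and the translation action becomes your coordinatewise action.

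What your version buys is that the Grothendieck group and its supporting results (\cref{thm_gro_group_props,thm_igro_same_implies_elts_act_same,thm_extend_homeo_action_to_gr_group}) disappear:
\begin{itemize}
\item invertibility of each $u$ is an explicit index shift;
\item the universal map $v\mapsto(\xi(s^{-1}v))_{s}$ needs only the inverses of individual homeomorphisms, rather than an extended $\gro(S)$-action;
\item surjectivity of $\rho$ comes for free from minimality of $W$.
\end{itemize}
What the paper's version buys is twofold. First, $W$ carries a translation action of all of $\gro(S)$ from the outset, which is what \cref{mainthm_kronecker_is_same_as_extension_kronecker} uses. Second, its density argument proves the universal property for every surjective action, minimal or not (\cref{lemma_nat_ext_univeral_prop}). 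Your minimality-based surjectivity of $\rho$ gives only the minimal case, though that is all the statement requires.

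One assertion you leave unchecked deserves a line, because it is exactly where non-cancellativity enters: that your finite approximation satisfies the defining relations within $F$.
\begin{itemize}
\item If $s,st\in F$ with $s\neq st$, then $t\cdot(p/(st))=p/s$ already holds in $S$. Both sides are the product of $st$ with the elements of $F\setminus\{s,st\}$.
\item If $st=s$, you need $t x_s=x_s$. This holds because $t\circ s=s$ on $X$ and $s$ is onto, so $t$ acts as the identity on $X$. This is the counterpart of case A in the paper's proof of \cref{lemma_nat_ext_props}.
\end{itemize}
Also take $|F|\ge 2$, so that $p/s$ is never an empty product, since $S$ need not have an identity.
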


The extension in \cref{mainthm_nat_ext} is realized by way of the Grothendieck group $\gro(S)$ and yields few surprises: it satisfies all of the same properties that are enjoyed by the more-familiar natural extension for $\N$-systems. 
In the event that $S$ is assumed to be cancellative, our results match some of those in \cite{briceno2025naturalextensionsembeddablesemigroup}, a recent -- and, to our knowledge, the only -- systematic attempt to generalize the natural extension functor to semigroup actions.
That a nice natural extension exists without the cancellativity assumption was, to us, surprising.
The secret is that while not every commutative semigroup embeds into a group, every commutative semigroup action by surjections ``embeds'' into an action of a cancellative, commutative semigroup action by surjections; see \cref{thm_igro_same_implies_elts_act_same}.

We conclude the paper by addressing the relationship between regional proximality in a system $X$ and in its natural extension $W$.
We show in \cref{thm_rp_of_natural_extension} that two points are regionally proximal in the natural extension $\pi: W \to X$ if and only if their images under $\pi$ are regionally proximal.
This leads quickly to the following theorem, which says, roughly speaking, that no matter how one interprets it, the maximal equicontinuous factor of the natural extension is the same as the maximal equicontinuous factor of the base.
We prove \cref{mainthm_kronecker_is_same_as_extension_kronecker} and its precursors in \cref{sec_proof_of_same_kron}.

\begin{Maintheorem}
\label{mainthm_kronecker_is_same_as_extension_kronecker}
\lean{theoremE}
\leanok{}
    Let $S$ be a commutative semigroup, $X$ be a minimal $S$-system, and $\pi: W \to X$ be its natural extension. The following $S$-systems are the same up to isomorphism:
    \begin{enumerate}
        \item \label{item_x_kron}
        $X/\RP_{X,S}$, the largest equicontinuous factor of the $S$-system $X$;
        \item \label{item_w_kron}
        $W / \RP_{W,S}$, the largest equicontinuous factor of the natural extension $W$ of $X$ as an $S$-system; and
        \item \label{item_w_kron_gp}
        $W / \RP_{W,\gro(S)}$, the largest equicontinuous factor of the natural extension $W$ of $X$ as a $\gro(S)$-system, made into an $S$-system via the homomorphism $\igro: S \to \gro(S)$.
    \end{enumerate}
\end{Maintheorem}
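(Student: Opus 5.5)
We will compare the three factors through the natural extension map $\pi: W \to X$, using \cref{mainthm_RP_is_eq_relation} to identify each quotient with a largest equicontinuous factor. Two facts are used for $W$. First, $W$ is a minimal $S$-system (\cref{mainthm_nat_ext}). Second, $W$ is also a minimal $\gro(S)$-system, since any $\gro(S)$-orbit contains an $S$-orbit. Both $S$ and $\gro(S)$ are commutative, so \cref{mainthm_RP_is_eq_relation} applies to each of these three systems. Hence $\RP_{X,S}$, $\RP_{W,S}$ and $\RP_{W,\gro(S)}$ are closed invariant equivalence relations, and the corresponding quotients are the largest equicontinuous factors.

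\textbf{Step 1: (\ref{item_w_kron}) $\cong$ (\ref{item_w_kron_gp}).*} We apply \cref{mainthm_same_rp_for_comm_actions} to the two commuting minimal actions of $S$ and of $T = \gro(S)$ on $W$. The two actions commute because $\gro(S)$ is abelian and $S$ acts through $\igro$. This gives $\RP_{W,S} = \RP_{W,\gro(S)}$ as subsets of $W^2$. So the quotient spaces are literally equal, and the $S$-actions on them agree because $s$ acts as $\igro(s)$.

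\textbf{Step 2: (\ref{item_x_kron}) $\cong$ (\ref{item_w_kron}).} The key input is the statement announced before the theorem (\cref{thm_rp_of_natural_extension}): $(w,w') \in \RP_{W,S}$ if and only if $(\pi w, \pi w') \in \RP_{X,S}$. Let $q: X \to X/\RP_{X,S}$ be the quotient map. Then $q\circ\pi: W \to X/\RP_{X,S}$ is a factor map, and its kernel relation $\{(w,w') : q\pi w = q\pi w'\}$ is exactly $(\pi\times\pi)^{-1}\RP_{X,S}$, which equals $\RP_{W,S}$. A continuous surjection between compact Hausdorff spaces is a quotient map. Therefore $q\circ\pi$ descends to a continuous bijection $W/\RP_{W,S} \to X/\RP_{X,S}$ that intertwines the $S$-actions, and this bijection is a homeomorphism by compactness.

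\textbf{Main obstacle.} The substantive work is the equivalence in \cref{thm_rp_of_natural_extension}, specifically the direction that lifts regional proximality from $X$ to $W$. The easy direction is that $\pi\times\pi$ maps $\RP_{W,S}$ into $\RP_{X,S}$, which follows from continuity. If I had to prove the lifting direction, I would proceed as follows:
\begin{itemize}[leftmargin=*]
\item Use $\RP=\seq$ (\cref{mainthm_RP_is_eq_relation}) on both systems.
\item Show that $W/\seq_W$ is an equicontinuous $S$-system on which $S$ acts by homeomorphisms; this holds since equicontinuous minimal systems act by surjective, and hence invertible, isometries.
\item Show that the equicontinuous factor $X/\RP_X$ also acts by homeomorphisms.
\item Conclude, via the universal property in \cref{mainthm_nat_ext}, that the factor $W \to W/\seq_W$ factors through $X$ up to isomorphism, using that the natural extension of an invertible equicontinuous system is itself.
\end{itemize}
The delicate point in that argument is checking that $W/\seq_W$ is determined by the coordinate in $X$. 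For this I would use minimality together with the description of $W$ as an inverse limit.
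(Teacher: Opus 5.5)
Your proof is correct and follows the paper's argument. Items 1 and 2 are identified through \cref{thm_rp_of_natural_extension}, using the same descent of $q\circ\pi$ to the quotient that the paper carries out inside that theorem. Items 2 and 3 are identified through \cref{mainthm_same_rp_for_comm_actions} once one notes that $W$ is minimal for both $S$ and $\gro(S)$.

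Your optional sketch of the lifting direction is not needed, and its universal-property step would not settle the ``delicate point'' you flag, because that property only produces maps \emph{into} $W$. The paper instead uses $\RP_X = S^{-1}\RP_X$ (\cref{thm_rp_strong_s_inv}) to get $(\varphi(g),\psi(g)) \in \RP_X$ for every $g \in \gro(S)$, and then finishes with a cylinder-set argument. A quick way to complete your own route is to observe that $\pi w = \pi w'$ forces $w$ and $w'$ to be $S$-proximal in $W$: shift by an $s$ with $F\,\igro(s) \subseteq \igro(S)$. Consequently every equicontinuous factor of $W$, being distal, factors through $\pi$.
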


\subsection{Outline}

The paper is organized as follows. In \cref{sec_prelims}, we review the necessary preliminaries. In \cref{sec_rp_for_semigp_actions}, we develop the basic theory of the regionally proximal relation for semigroup actions and prove Theorems \ref{mainthm_equi_icer_iff_contains_rp} and \ref{mainthm_RP_is_backward_RP}.
In \cref{sec_corner_space}, we introduce the corner space construction whose dynamics is studied in \cref{sec_corner_space_dynamics}, where \cref{mainthm_RP_is_eq_relation} is proved.
\cref{sec_nat_extension} is devoted to the natural extension, where we prove Theorems \ref{mainthm_same_rp_for_comm_actions}, \ref{mainthm_nat_ext}, and \ref{mainthm_kronecker_is_same_as_extension_kronecker}.

\subsection{AI use disclosure}

The authors used generative AI (including OpenAI's ChatGPT, Google's Gemini, and Anthropic's Claude) to assist with literature searches and language editing.
All references suggested by these tools were independently verified by the authors.
Generative AI was not used to generate mathematical results or proofs; all mathematical content is the work of the authors, and the authors take full responsibility for it.

\subsection{Lean verification}

We verified all of the results in this paper in Lean.  The verification code can be found on Github at: \url{https://github.com/dglasscockUML/NSF_LEAPS}.  In addition, we registered the main results appearing in the introduction with Palomar (\url{https://palomar-registry.org/}).  A complete link to the Palomar registry entry will be provided in an updated version of this paper.

\subsection{Acknowledgments}

The first three authors gratefully acknowledge support from the US National Science Foundation under a LEAPS Grant, Award Number 2418589.

\section{Preliminaries}
\label{sec_prelims}

Most of this section is standard preparatory material, though there are some novel lemmas that may be of independent interest; see, eg., \cref{lemma_forward_backward_set_orb_clos_coincide} and those in \cref{sec_back_and_forward_equi}.

\subsection{Sets}

Throughout this paper, when elements of a set $S$ act by maps with domain a set $X$, we denote by $sx$ the image of $x \in X$ under the map $s \in S$.  For subsets $H \subseteq S$ and $A \subseteq X$, we define
\begin{align*}
    Hx &\defeq \{sx \mid s \in H\},\\
    sA &\defeq \{sx \mid x \in A\},\\
    HA &\defeq \{sx \mid s \in H, \ x \in A\} \\
    s^{-1}A &\defeq \{x \in X \mid sx \in A\},\\
    H^{-1}A &\defeq \{x \in X \mid \exists s \in H, \ sx \in A\}.
\end{align*}
We say that $A$ is \emph{$S$-invariant} if $SA \subseteq A$.  It is easy to see that an arbitrary intersection of $S$-invariant sets is $S$-invariant.

If not specified otherwise, the set $S$ acts on $X^2$ via the diagonal action: $s(x,y) = (sx,sy)$.
The \emph{diagonal of $X^2$} is the set
\[
    \Delta_X \defeq \big\{ (x,x) \mid x \in X \big\},
\]
which we alternatively denote by $\Delta$ when the ambient set $X$ is clear from context.
For maps $f, g: X \to Y$, we denote by $f \times g$ the map $X^2 \to Y^2$ defined by
\[(f \times g) (x_1, x_2) = (f x_1, g x_2).\]

We label the following lemma so that it is easy to recall when we need it.

\begin{lemma}
    \label{lemma_commuting_map_algebra}
    \lean{preimageOfCommutingMapsCommutes}
    \leanok{}
    Let $X$ be a set and $f, g : X \to X$ be commuting maps.  For all $A \subseteq X$, $f g^{-1} A \subseteq g^{-1} f A$.
\end{lemma}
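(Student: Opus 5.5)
The plan is a direct element chase: take an arbitrary element of $f g^{-1} A$, pull it back to a witness in $g^{-1}A$, and use commutativity to check that the witness shows membership in $g^{-1} f A$. No results from earlier in the paper are needed; only the definitions of image and preimage from the preceding subsection.

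First I unpack the left-hand side. Let $y \in f g^{-1} A$. By the definition of the image $sA$, there is some $x \in g^{-1}A$ with $y = fx$. By the definition of the preimage $s^{-1}A$, the condition $x \in g^{-1}A$ means $gx \in A$.

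Next I show $y \in g^{-1} f A$, which by definition means $gy \in fA$. Commutativity of $f$ and $g$ gives $gy = g(fx) = f(gx)$. Since $gx \in A$, the point $f(gx)$ belongs to $fA$, so $gy \in fA$ and hence $y \in g^{-1} f A$.

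There is no real obstacle. The only point requiring care is to keep straight which side uses an image and which a preimage; the commutativity $gf = fg$ is applied exactly once, in the computation of $gy$.
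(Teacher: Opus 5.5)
Your proof is correct: the element chase $y = fx$ with $gx \in A$, followed by $gy = g(fx) = f(gx) \in fA$, is exactly the routine argument the paper has in mind when it leaves this lemma as an easy exercise. Nothing further is needed.
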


\begin{proof}
\leanok{}
    This is an easy exercise left to the reader.
\end{proof}

\subsection{Topology}

We work throughout the paper with compact Hausdorff spaces.  While we avoid the terminology around uniform spaces, it should be noted that the class of compact Hausdorff spaces is the same as the class of compact uniform spaces.  Indeed, compact Hausdorff spaces have a unique uniformity given by sets whose interior contains the diagonal, while compact uniform spaces are compact and completely regular, hence Hausdorff.

The following lemmas are used repeatedly throughout the paper.

\begin{lemma}
\label{lemma_closed_nbhd_prod_in_diag}
\lean{openClosureProductInEntourage}
\leanok{}
    Let $X$ be a compact Hausdorff space.  For all $x \in X$ and all open $\alpha \subseteq X^2$ containing the diagonal, there exists an open set $U \subseteq X$ containing $x$ such that $\overline{U} \times \overline{U} \subseteq \alpha$.
\end{lemma}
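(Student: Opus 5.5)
The plan is to use the regularity of compact Hausdorff spaces twice: once to shrink the entourage to a box around $(x,x)$, and once to pass from an open neighborhood to one whose closure stays inside that box.

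First, since $\alpha$ is open in $X^2$ with the product topology and $(x,x) \in \alpha$, there are open sets $A, B \subseteq X$ containing $x$ with $A \times B \subseteq \alpha$. Put $W \defeq A \cap B$. Then $W$ is an open set containing $x$, and $W \times W \subseteq \alpha$.

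Second, a compact Hausdorff space is regular (indeed normal). Applying regularity to the point $x$ and the closed set $X \setminus W$ gives disjoint open sets $U \ni x$ and $O \supseteq X \setminus W$. Because $U \subseteq X \setminus O$ and $X \setminus O$ is closed, we get $\overline{U} \subseteq X \setminus O \subseteq W$. Consequently
\[
\overline{U} \times \overline{U} \subseteq W \times W \subseteq \alpha,
\]
which is the required conclusion.

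Neither step is a real obstacle. The only point needing care is the appeal to regularity of compact Hausdorff spaces; this is standard and follows by separating $x$ from each point of the compact set $X \setminus W$ and taking a finite subcover. Notice that the argument uses only that $\alpha$ is a neighborhood of the single point $(x,x)$, not the whole diagonal.
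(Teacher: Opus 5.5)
Your proposal is correct and follows the paper's approach: the paper's proof simply describes this as an exercise using the definition of the product topology (your box $A \times B$ around $(x,x)$) together with regularity of compact Hausdorff spaces (your shrinking to $U$ with $\overline{U} \subseteq W$). Your write-up fills in exactly those two steps.
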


\begin{proof}
\leanok{}
    This is a simple exercise using the definition of the product topology and the fact that compact Hausdorff spaces are regular.
\end{proof}

\begin{lemma}
\label{lemma_image_closure_equals_closure_image}
 \lean{imageClosureIsClosureImage}
 \leanok{}
    Let $f: X \to Y$ be a continuous map of compact Hausdorff spaces.  For all $A \subseteq X$, $\overline{fA} = f \overline{A}$.
\end{lemma}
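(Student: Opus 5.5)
We prove the two inclusions $f\overline{A} \subseteq \overline{fA}$ and $\overline{fA} \subseteq f\overline{A}$ separately. The first uses only continuity, and the second uses only compactness of $X$ and the Hausdorff property of $Y$.

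For the first inclusion, the set $f^{-1}\overline{fA}$ is closed in $X$ because $f$ is continuous and $\overline{fA}$ is closed. It contains $A$, since $fA \subseteq \overline{fA}$. Therefore it contains $\overline{A}$, which gives $f\overline{A} \subseteq \overline{fA}$.

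For the reverse inclusion, note that $\overline{A}$ is a closed subset of the compact space $X$, so it is compact. Its continuous image $f\overline{A}$ is then compact in $Y$. Since $Y$ is Hausdorff, compact subsets of $Y$ are closed, so $f\overline{A}$ is closed. It contains $fA$ because $A \subseteq \overline{A}$. Hence it contains the closure $\overline{fA}$, which gives $\overline{fA} \subseteq f\overline{A}$.

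Combining the two inclusions yields $\overline{fA} = f\overline{A}$. The only non-formal step is the fact that $f\overline{A}$ is closed, which is exactly where compactness of $X$ and the Hausdorff property of $Y$ are used. In the Lean verification this should correspond to chaining the standard library facts (closed subsets of compact spaces are compact, continuous images of compact sets are compact, compact sets in Hausdorff spaces are closed), so no step should pose a real obstacle.
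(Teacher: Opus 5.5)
Your proof is correct and matches the paper's approach: the paper leaves this as an exercise using that $f$ is a closed map, and your argument proves exactly that $f\overline{A}$ is closed (closed subset of compact $X$ is compact, its image is compact, hence closed in Hausdorff $Y$), with continuity giving the other inclusion.
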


\begin{proof}
\leanok{}
    This is a simple exercise using the fact that $f$ is a closed map.
\end{proof}

\begin{lemma}
    \label{lemma_open_nbhd_of_fiber_contains_open_preimage}
    \lean{existOpenNeighborhoodPreImageContainedIn}
    \leanok{}
    Let $\pi: X \to Y$ be a surjection of compact Hausdorff spaces.  For all $y \in Y$ and all open sets $W \subseteq X$ such that $\pi^{-1} y \subseteq W$, there exists an open set $U \subseteq Y$ containing $y$ such that $\pi^{-1} U \subseteq W$.
\end{lemma}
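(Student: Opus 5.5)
The plan is to argue by contradiction, using compactness of $X \setminus W$ together with the fact that $\pi$ is a closed map. Note that $\pi$ is continuous here, as is implicit from the context of the lemma.

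\textbf{Main step: the image of the complement.} Consider $K \defeq X \setminus W$. Since $W$ is open, $K$ is a closed subset of the compact space $X$, so $K$ is compact. Because $\pi$ is continuous and $Y$ is Hausdorff, $\pi K$ is compact and hence closed in $Y$. This is the same closed-map fact underlying \cref{lemma_image_closure_equals_closure_image}. Moreover, $y \notin \pi K$: if $y = \pi x$ for some $x \in K$, then $x \in \pi^{-1} y \subseteq W$, which contradicts $x \notin W$.

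\textbf{Choosing $U$ and verifying it.} Set $U \defeq Y \setminus \pi K$. By the previous step, $U$ is open and contains $y$. To check that $\pi^{-1} U \subseteq W$, suppose $x \in \pi^{-1} U$ but $x \notin W$. Then $x \in K$, so $\pi x \in \pi K$, which contradicts $\pi x \in U$.

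\textbf{Remarks.} Surjectivity of $\pi$ is not actually needed for this argument. The only substantive ingredient is that continuous maps from compact spaces to Hausdorff spaces are closed. I expect no real obstacle here.
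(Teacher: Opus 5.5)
Your proposal is correct and is essentially the paper's proof: set $U = Y \setminus \pi(X \setminus W)$, using that $X \setminus W$ is compact, so its image is compact and hence closed, and does not contain $y$. Your remark that surjectivity of $\pi$ is not needed is also right.
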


\begin{proof}
\leanok{}
    The set $X \setminus W$ is closed and hence compact.  The continuous image of compact sets is compact, so $\pi (X \setminus W)$ is compact, hence closed, and does not contain $y$.  Set $U = Y \setminus \pi(X \setminus W)$, and check that $y \in U$ and $\pi^{-1} U \subseteq W$.
\end{proof}

It is a useful fact that factor maps of minimal $(\N,+)$-systems are semiopen \cite[Lemma 2.9]{glasscock_koutsogiannis_richter_2019}.
As discussed in the introduction, this does not hold for general semigroup actions.
The following lemma will serve for us as a substitute.

\begin{lemma}      
    \label{lemma_image_of_entourage_somewhere_dense_along_diagonal}
    \lean{openProductInEntourageImage}
    \leanok{}
    Let $\pi: X \to Y$ be a surjection of compact Hausdorff spaces. For all open sets $\alpha \subseteq X^2$ containing the diagonal, there exists a non-empty, open set $V \subseteq Y$ such that
    \[V \times V \subseteq (\pi \times \pi) \alpha.\]
\end{lemma}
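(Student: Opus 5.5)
The plan is a compactness argument followed by an elementary finite-union version of the Baire category theorem; no dynamics is involved. Throughout I assume, as the context intends, that $\pi$ is continuous.

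First, I cover $X$ by sets that are ``$\alpha$-small''. For each $x \in X$, \cref{lemma_closed_nbhd_prod_in_diag} gives an open set $U_x \ni x$ with $\overline{U_x} \times \overline{U_x} \subseteq \alpha$. By compactness, finitely many of these, say $U_1, \dots, U_n$, cover $X$. Then
\[
    Y = \pi X = \bigcup_{i=1}^n \pi \overline{U_i}.
\]
Each $\pi \overline{U_i}$ is compact, being the continuous image of a closed subset of a compact space. Since $Y$ is Hausdorff, each $\pi \overline{U_i}$ is therefore closed.

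Next, I show that one of these finitely many closed sets has nonempty interior. Suppose instead that every $\pi \overline{U_i}$ had empty interior. Then each complement $Y \setminus \pi\overline{U_i}$ would be open and dense. A finite intersection of open dense sets is open and dense, which one checks by induction on the number of sets: if $O_1$ and $O_2$ are open and dense, any nonempty open set $W$ meets $O_1$ in a nonempty open set, which then meets $O_2$. So the intersection of the complements would be nonempty when $Y \neq \emptyset$. This contradicts the fact that the sets $\pi\overline{U_i}$ cover $Y$. If $X$ is empty the statement is vacuous or degenerate, and I would handle it separately or note that $X$ is implicitly assumed nonempty. This yields an index $i$ and a nonempty open set $V \subseteq \pi \overline{U_i}$.

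Finally, the required inclusion follows directly:
\[
    V \times V \subseteq \pi\overline{U_i} \times \pi\overline{U_i} = (\pi \times \pi)\big(\overline{U_i} \times \overline{U_i}\big) \subseteq (\pi \times \pi)\alpha.
\]
The only step with real content is the finite Baire-type argument in the middle paragraph. Even that step is routine, and the full Baire category theorem for compact Hausdorff spaces is not needed because the union is finite. The main subtlety is remembering to use closures $\overline{U_i}$ rather than the open sets $U_i$, so that the images $\pi\overline{U_i}$ are closed and the interior argument applies.
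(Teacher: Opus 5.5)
Your proof is correct and follows the paper's argument: a finite cover by sets $U_x$ with $\overline{U_x}\times\overline{U_x}\subseteq\alpha$, closedness of the images $\pi\overline{U_x}$, and a category argument to find one with non-empty interior. The only difference is that you prove the finite Baire statement by hand instead of citing that compact Hausdorff spaces are Baire. One small correction: when $X=\emptyset$ the statement is not vacuous but false, since no non-empty $V\subseteq Y=\emptyset$ exists, so non-emptiness of $X$ is a genuine implicit hypothesis, which the paper's proof also uses.
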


\begin{proof}
\leanok{}
    Let $\alpha \subseteq X^2$ be open and contain $\Delta_X$. By \cref{lemma_closed_nbhd_prod_in_diag}, for all $x \in X$, there exists an open set $U_x \subseteq X$ containing $x$ such that $\overline{U_x} \times \overline{U_x} \subseteq \alpha$.

    By compactness, there exists a finite set $F \subseteq X$ such that $\bigcup_{x \in F} U_x = X$. Thus $\bigcup_{x \in F} \overline{U_x} = X$.  Since $\pi$ is surjective, $\bigcup_{x \in F} \pi \overline{U_x} = Y$. Since $\pi$ is a closed map, the sets $\pi \overline{U_x}$ are closed.  Since $Y$ is compact Hausdorff, it is a Baire space, so there exists $x \in F$ such that $\pi \overline{U_x}$ has non-empty interior.  Let $V \subseteq Y$ be non-empty, open such that $V \subseteq \pi \overline{U_x}$.  Then
    \[
        V \times V \subseteq \pi \overline{U_x} \times \pi \overline{U_x} = (\pi \times \pi) (\overline{U_x} \times \overline{U_x}) \subseteq (\pi \times \pi)\alpha,
    \]
    as was to be shown.
\end{proof}

The following result is implicit in most papers on the subject, but we could not find a reference.  We give it here for the convenience of the reader.

\begin{lemma}
\label{lemma_quotient_is_compact_hausdorff}
\lean{quotientByCERIsCompactHausdorff}
\leanok{}
    Let $X$ be a compact Hausdorff space, and let $R \subseteq X^2$ be a closed equivalence relation.  The quotient $X/R$, endowed with the quotient topology, is a compact Hausdorff space.
\end{lemma}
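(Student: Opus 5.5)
Compactness of $X/R$ is immediate: the quotient map $q: X \to X/R$ is a continuous surjection from a compact space, so its image is compact. The substance is the Hausdorff property. The plan is to show first that $q$ is a closed map, and then to separate distinct equivalence classes by saturated open sets. Those sets have open images in the quotient topology.

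To see that $q$ is closed, let $C \subseteq X$ be closed. Its saturation is
\[
    R[C] \defeq \{ y \in X \mid \exists x \in C,\ (x,y) \in R \} = p_2\big(R \cap (C \times X)\big),
\]
where $p_2: X^2 \to X$ is the second-coordinate projection. The set $R \cap (C \times X)$ is closed in the compact space $X^2$, hence compact, so its image under $p_2$ is compact and therefore closed in the Hausdorff space $X$. Thus $q^{-1}(q C) = R[C]$ is closed, and so $qC$ is closed in $X/R$.

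Next take distinct points $q x \neq q y$ of $X/R$, so $(x,y) \notin R$. Each equivalence class is closed, being the image under $p_2$ of the compact set $R \cap (\{x\} \times X)$. The classes $[x]$ and $[y]$ are disjoint compact subsets of $X$, so by normality of compact Hausdorff spaces there are disjoint open sets $A \supseteq [x]$ and $B \supseteq [y]$. Put $U' = X \setminus R[X \setminus A]$ and $V' = X \setminus R[X \setminus B]$.

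These sets have the properties we need.
\begin{itemize}
    \item They are open, because $R[X\setminus A]$ and $R[X\setminus B]$ are closed by the previous step.
    \item They are saturated, because the complement of a saturated set is saturated.
    \item $U' \subseteq A$ and $V' \subseteq B$, so $U'$ and $V'$ are disjoint.
    \item $[x] \subseteq U'$: if some $z \in [x]$ were related to a point of $X \setminus A$, then by transitivity that point would lie in $[x] \subseteq A$, which is impossible. The same argument gives $[y] \subseteq V'$.
\end{itemize}
Since $U'$ is saturated and open, $q^{-1}(qU') = U'$, so $qU'$ is open in the quotient topology; likewise $qV'$ is open. Because $U'$ and $V'$ are disjoint and saturated, $qU'$ and $qV'$ are disjoint, and they contain $qx$ and $qy$ respectively. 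Hence $X/R$ is Hausdorff.

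The main point needing care is the saturation step: it is closedness of $R$ together with compactness of $X$ that makes saturations of closed sets closed. With that in hand, the remainder is routine bookkeeping with symmetry and transitivity of $R$.
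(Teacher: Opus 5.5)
Your proof is correct and follows the same route as the paper: you separate the two closed classes by disjoint open sets using normality, then pass to the saturated open sets $X \setminus R[X \setminus A]$, which are exactly the preimages of the paper's sets $\widetilde{U} = \pi(U^c)^c$. The one real difference is in your favor. You prove that $q$ is closed directly, via $q^{-1}(qC) = p_2\big(R \cap (C \times X)\big)$, whereas the paper justifies closedness of $\pi$ by citing that continuous maps of compact Hausdorff spaces are closed, which presupposes that $X/R$ is Hausdorff, the very thing being proved; your step is where the closedness of $R$ is genuinely needed.
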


\begin{proof}
\leanok{}
    Since the quotient map $\pi : X \to X/R$ is continuous and $X$ is compact, we have that $X/R$ is compact.  To see that $X/R$ is Hausdorff, let $y, z \in X/R$ be distinct. Since $R$ is closed, the sets $\pi^{-1}\{y\}$ and $\pi^{-1}\{z\}$ are disjoint closed subsets of $X$.
    Because $X$ is compact Hausdorff, it is a normal space. Thus, there exist disjoint open sets $U, V \subseteq X$ such that $\pi^{-1}\{y\} \subseteq U$ and $\pi^{-1}\{z\} \subseteq V$.

    Let $\widetilde{U} = \pi(U^c)^c$ and $\widetilde{V} = \pi(V^c)^c$. It is easy to see that $\widetilde{U} \cap \widetilde{V} = \emptyset$, that $y \in \widetilde{U}$, and that $z \in \widetilde{V}$. It remains to show $\widetilde{U}$ and $\widetilde{V}$ are open. For this, it suffices to show $\pi$ is a closed map, but it is a standard fact that continuous maps of compact Hausdorff spaces are closed.
\end{proof}

\subsection{Semigroups}

The semigroups in this paper are all discrete and are all written multiplicatively, though we will frequently assume commutativity.  Semigroups are assumed to be non-empty but are not assumed to have an identity element.  Given a semigroup $S$ and a topological space $X$, an \emph{action of $S$ on $X$ by continuous maps} is a semigroup homomorphism from $S$ into $C(X,X)$, the set of continuous self-maps of $X$ under composition; more simply, for all $s, t \in S$ and $x \in X$, the maps $s, t, st : X \to X$ are continuous and $s(tx) = (st) x$. 

A subset $A$ of a semigroup $S$ is \emph{syndetic} if there exists a finite set $F \subseteq S$ such that $F^{-1} A = S$.  The set $A$ is \emph{thick} if for all finite $F \subseteq S$, there exists $s \in S$ such that $Fs \subseteq A$.

\begin{lemma}
    \label{lemma_synd_thick_dual}
    \lean{syndeticThickIntersect}
    \leanok{}
    Let $S$ be a semigroup.  If $A \subseteq S$ is syndetic and $B \subseteq S$ is thick, then $A \cap B \neq \emptyset$.
\end{lemma}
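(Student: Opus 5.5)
The plan is to unwind the two definitions with the notation of the preceding subsection. Syndeticity of $A$ means there is a finite set $F \subseteq S$ with $F^{-1}A = S$. Here $F^{-1}A = \{s \in S \mid \exists f \in F,\ fs \in A\}$, where $S$ acts on itself by left multiplication. Thickness of $B$ will then be applied to this same finite set $F$.

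Concretely, I would carry out the following steps.

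\begin{enumerate}
\item Fix a finite $F \subseteq S$ with $F^{-1}A = S$, as given by syndeticity.
\item Apply thickness of $B$ to $F$, obtaining some $s \in S$ with $Fs \subseteq B$.
\item Since $s \in S = F^{-1}A$, choose $f \in F$ with $fs \in A$.
\item Note that $fs \in Fs \subseteq B$, so $fs \in A \cap B$, which shows $A \cap B \neq \emptyset$.
\end{enumerate}

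There is one degenerate case to settle: $F$ could be empty. In that case $F^{-1}A = \emptyset$, which cannot equal $S$ because semigroups are assumed non-empty. So $F$ is non-empty, although this is never actually needed, since step 3 already produces an element of $F$.

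I do not expect any real obstacle. The only point that needs care is the side convention: the sets $F^{-1}A$ and $Fs$ must both use multiplication by elements of $F$ on the left. With the paper's conventions (left action, $Hx = \{hx \mid h \in H\}$) they do, so the two definitions pair up exactly as used above.
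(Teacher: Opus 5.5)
Your proposal is correct and is essentially identical to the paper's proof: take the finite $F$ with $F^{-1}A = S$, use thickness to get $s$ with $Fs \subseteq B$, then pick $f \in F$ with $fs \in A$, so that $fs \in A \cap B$. Your remark on the side conventions is also accurate.
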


\begin{proof}
\leanok{}
    Since $A$ is syndetic, there exists a finite set $F \subseteq S$ such that $F^{-1} A = S$.  Since $B$ is thick, there exists $s \in S$ such that $Fs \subseteq B$.  We see $s \in S = F^{-1} A$, so there exists $f \in F$ such that $fs \in A \cap B$, as desired.
\end{proof}

\begin{lemma}
    \label{lemma_thick_right_shift_is_thick}
    \lean{rightTransOfThickIsThick}
    \leanok{}
    Let $S$ be a semigroup and $H \subseteq S$ be thick.  For all $t \in S$, the set $Ht$ is thick.
\end{lemma}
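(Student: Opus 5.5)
The plan is to verify the definition of thickness for $Ht$ directly, using only the thickness of $H$ and associativity; commutativity of $S$ is not needed. Let $t \in S$ and let $F \subseteq S$ be finite. We must produce $u \in S$ with $Fu \subseteq Ht$.

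Since $H$ is thick, applying the definition to the same finite set $F$ yields some $s \in S$ with $Fs \subseteq H$. Take $u \defeq st$. Then for each $f \in F$,
\[
    fu = f(st) = (fs)t \in Ht,
\]
because $fs \in H$. Hence $Fu = (Fs)t \subseteq Ht$. As $F$ was arbitrary, $Ht$ is thick.

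There is no real obstacle. The only point needing care is the side on which we multiply: thickness is defined through right translates $Fs$. Multiplying by $t$ on the right therefore composes correctly with the witness $s$ via associativity. A left shift $tH$ would not work in this way without commutativity, which is presumably why the lemma is stated for $Ht$.
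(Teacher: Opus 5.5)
Your proof is correct and matches the paper's argument: choose $s$ with $Fs \subseteq H$ by thickness of $H$, then $F(st) = (Fs)t \subseteq Ht$ by associativity. Your side remark is also right; for example, in a free semigroup $tH$ need not be thick.
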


\begin{proof}
\leanok{}
    Let $F \subseteq S$ be finite and $t \in S$.  There exists $s \in S$ such that $Fs \subseteq H$.  Then $Fst \subseteq Ht$.
\end{proof}

\begin{lemma}
    \label{lemma_image_of_syndetic}
    \lean{surjImgOfSyndeticIsSyndetic}
    \leanok{}
    If $\varphi: S \to T$ is a surjective semigroup homomorphism and $A \subseteq S$ is syndetic, then $\varphi A \subseteq T$ is syndetic.
\end{lemma}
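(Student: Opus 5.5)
We argue directly from the definition of syndeticity and transport the finite witnessing set through $\varphi$. Since $A \subseteq S$ is syndetic, there is a finite set $F \subseteq S$ with $F^{-1}A = S$. That is, for every $s \in S$ there exists $f \in F$ with $fs \in A$. The candidate witness for $\varphi A$ is the finite set $\varphi F \subseteq T$, and we aim to show $(\varphi F)^{-1}(\varphi A) = T$.

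Fix $t \in T$. Because $\varphi$ is surjective, choose $s \in S$ with $\varphi s = t$. Choose $f \in F$ with $fs \in A$. Since $\varphi$ is a homomorphism,
\[
    \varphi(f)\, t = \varphi(f)\varphi(s) = \varphi(fs) \in \varphi A.
\]
Hence $t \in (\varphi F)^{-1}(\varphi A)$. Here the set $H^{-1}B$ is read with $T$ acting on itself by left multiplication, as in the definitions of the preliminaries. Since $t$ was arbitrary, $(\varphi F)^{-1}(\varphi A) = T$, and $\varphi F$ is finite, being the image of a finite set. So $\varphi A$ is syndetic.

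There is no genuine obstacle. The only points needing care are to use surjectivity to lift an arbitrary $t$ back to $S$, and to keep the order of multiplication consistent with the left-multiplication convention in the definition of syndetic.
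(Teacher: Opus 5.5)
Your proof is correct and follows the paper's argument step for step. You push the finite witness $F$ forward to $\varphi F$, lift an arbitrary $t \in T$ to some $s \in S$ using surjectivity, and conclude from $\varphi(f)\,t = \varphi(fs) \in \varphi A$.
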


\begin{proof}
\leanok{}
    Since $A \subseteq S$ is syndetic, there exists a finite set $F \subseteq S$ such that for all $s \in S$, there exists $f \in F$ such that $fs \in A$.

    Put $G \defeq \varphi F$; it is a finite subset of $T$. We claim that for all $t \in T$, there exists $g \in G$ such that $gt \in \varphi A$.  Indeed, let $t \in T$.  Write $t = \varphi s$ for some $s \in S$.  There is $f \in F$ such that $fs \in A$.  Apply $\varphi$ to see that $gt = \varphi(f) t \in \varphi A$, as desired.
\end{proof}

We end this subsection with some combinatorial lemmas we will need later.

\begin{lemma}
\label{lemma_delta_sets_in_thick_sets}
\lean{thickSetContainsLeftDeltaSequence}
\lean{thickSetContainsRightDeltaSequence}
\leanok{}
    Let $S$ be a semigroup, let $H \subseteq S$ be thick, and let $k \in \N$.
    \begin{enumerate}
        \item There exist $s_1, s_2, \ldots, s_k \in S$ such that for all $i < j$, $s_i \in H s_j$.

        \item There exist $s_1, s_2, \ldots, s_k \in S$ such that for all $i < j$, $s_j \in s_iH$.
    \end{enumerate}
\end{lemma}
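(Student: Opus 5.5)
We argue by induction on $k$, building the sequence backward from its last term in part (1) and forward in part (2), and using thickness of $H$ at each step to absorb all previously chosen elements at once. Two facts from the definition do the work. For $H \subseteq S$ thick and any finite $F \subseteq S$ there is $s \in S$ with $Fs \subseteq H$. In particular, taking $F = \{f\}$, every element $f \in S$ can be written as $f = \ldots$; more to the point, if $fs = h \in H$ then $h \in Hs$ with $h$ being \emph{any} element of $Fs$.

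\textbf{Part (1).} We want $s_1, \ldots, s_k$ with $s_i \in H s_j$ for all $i<j$. We construct the sequence from the end, inducting on $k$; the case $k=1$ is trivial (any $s_1$). Suppose $s_1', \ldots, s_{k-1}'$ have been found with $s_i' \in H s_j'$ for $i<j\le k-1$. Apply thickness of $H$ to the finite set $F = \{s_1', \ldots, s_{k-1}'\}$ to obtain $t \in S$ with $s_i' t \in H$ for all $i$. Now set
\[
    s_i \defeq s_i' t \quad (1 \le i \le k-1), \qquad s_k \defeq t .
\]
We check the two kinds of pairs.
\begin{enumerate}
    \item For $i<j\le k-1$, write $s_i' = h s_j'$ with $h \in H$. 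Multiplying on the right by $t$ gives $s_i = s_i' t = h s_j' t = h s_j \in H s_j$.
    \item For $i<k$, we have $s_i = s_i' t$, and $s_i' t \in H$. This alone does not give $s_i \in H s_k = Ht$, so a different choice of $F$ is needed.
\end{enumerate}

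The fix is to include more in the thick-set request. Choose $t$ so that $F t \subseteq H$ and additionally define $s_k \defeq t$ and $s_i \defeq (s_i' t)\, t$. Then $s_i \in Ht$ because $s_i' t \in H$. The earlier relations survive right multiplication by $t^2$: $s_i' = h s_j'$ gives $s_i' t\, t = h s_j' t\, t$, that is, $s_i = h s_j$. Hence $s_i \in H s_j$ for all $i<j\le k$, completing the induction.

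\textbf{Part (2).} We want $s_j \in s_i H$ for all $i<j$, that is, $s_j = s_i h$ with $h \in H$. Now build forward: choose $s_{j+1} = s_j\, h_j$ and arrange $h_j h_{j+1} \cdots h_{m} \in H$ for all relevant products. Equivalently, it suffices to find $u_1, \ldots, u_{k-1}$ with every consecutive product $u_i u_{i+1} \cdots u_{j-1} \in H$, and then set $s_1$ arbitrary and $s_{j} = s_1 u_1 \cdots u_{j-1}$.

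These $u$'s can themselves be built backward by induction, as in part (1). Given $u_2, \ldots, u_{k-1}$ with all consecutive products in $H$, apply thickness to the finite set
\[
    F = \{ u_2 \cdots u_m \mid 2 \le m \le k-1 \} \cup \{\text{one extra element}\}
\]
to obtain a right multiplier. Then
\[
    s_j = s_1 u_1 \cdots u_{j-1} = \bigl(s_1 u_1 \cdots u_{i-1}\bigr)\bigl(u_i \cdots u_{j-1}\bigr) = s_i \,\bigl(u_i \cdots u_{j-1}\bigr) \in s_i H .
\]

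\textbf{Main obstacle.} The only problem in both parts is direction. Thickness puts elements of $S$ to the right ($Fs \subseteq H$), so prepending to the left is awkward. The main obstacle will be making part (2)'s backward construction of the $u$'s work with right-only thickness. If that fails, the fallback is to build $s_{j+1} = s_j\, t_j$ directly, with $t_j$ chosen so that $\{\,\text{previous partial products}\,\}\, t_j \subseteq H$ in a suitably reindexed form.
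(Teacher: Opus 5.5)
Your part (1) is correct, and it takes a different route from the paper. You append $s_k=t$ and replace each $s_i'$ by $s_i't^2$. This works because $s_i\in Hs_j$ survives right multiplication, and $s_i't\in H$ supplies the $H$-factor in front of $t$, so you never need a multiplier lying in $H$. The paper instead builds a single sequence $t_1,\dots,t_k$ with all consecutive products $t_i\cdots t_j\in H$, and uses suffix products for (1) and prefix products for (2).

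Part (2), however, is not proved. Your reduction to a sequence $u_1,\dots,u_{k-1}$ with all consecutive products in $H$ is fine. The backward construction of that sequence cannot work. A new first term $u_1$ must satisfy $u_1\in H$ and $u_1(u_2\cdots u_m)\in H$ for all $m$, which is a \emph{left}-multiplier condition. Thickness only gives $r$ with $Fr\subseteq H$, so applying it to $\{u_2\cdots u_m\}$ appends on the right and says nothing about products that start with $u_1$. The part (1) trick does not transfer either, since $s_j\in s_iH$ is preserved by left, not right, multiplication.

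Your fallback (build forward, choosing the next multiplier so that the previous partial products times it lie in $H$) is the right direction. But it omits the one point that needs an idea: the new term must itself lie in $H$. This is the length-one product, i.e.\ the requirement $s_{j+1}\in s_jH$. The definition of thickness does not directly provide a $t\in H$ with $Ft\subseteq H$.

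The missing step is the upgrade with which the paper's proof opens: for every finite $F\subseteq S$ there is $y\in H$ with $Fy\subseteq H$. To get it, take any $x\in S$, choose $t$ with $(Fx\cup\{x\})t\subseteq H$, and set $y=xt$. The paper takes $x$ with $Fx\subseteq H$, but any $x$ works.

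With this, the forward induction goes through. Start with any $u_1\in H$. Given $u_1,\dots,u_n$, apply the upgrade to $F=\{u_1\cdots u_n,\,u_2\cdots u_n,\dots,u_n\}$ to obtain $u_{n+1}\in H$ with $Fu_{n+1}\subseteq H$. Then every consecutive product ending at $u_{n+1}$ lies in $H$, and your formula $s_j=s_1u_1\cdots u_{j-1}$ finishes part (2).
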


\begin{proof}
\leanok{}
    Since $H$ is thick, for all finite $F \subseteq S$, there exists $s \in S$ such that $Fs \subseteq H$. Continuing, since $Fs \cup \{s\}$ is finite, there exists $s' \in S$ such that $Fss' \cup \{ss'\} \subseteq H$.  We see that for all $F \subseteq S$ finite, there exists $s \in H$ such that $Fs \subseteq H$.
    
    We claim that there exist $t_1, \ldots, t_k \in S$ such that for all $i \leq j$, $t_i \cdots t_j \in H$.  Indeed, choose $t_1 \in H$.  Supposing $t_1, \ldots, t_\ell \in S$ are such that all consecutive, increasing sub-products are in $H$, put $F = \{t_1 \cdots t_n, t_2 \cdots t_n , \ldots, t_{n-1}t_n, t_n\}$.  There exists $t_{n+1} \in H$ such that $Ft_{n+1} \subseteq H$.  It is easy to see that all consecutive, increasing sub-products of $t_1, \ldots, t_{n+1}$ are in $H$.

    To see 1., put $s_i = t_i \cdots t_k$, and note that $s_i = t_{i} \cdots t_{j-1} s_j,$ where $t_i \cdots t_{j-1} \in H$.

    To see 2., put $s_i = t_1 \cdots t_i$, and note that $s_j = s_i t_{i+1} \cdots t_j,$ where $t_{i+1}\cdots t_j \in H$.
\end{proof}

\begin{lemma}
\label{lemma_syndetic_quotient_sets}
\lean{unionOfRightQuotientSetsIsSyndetic}
\lean{unionOfLeftQuotientSetsIsSyndetic}
\leanok{}
    Let $S$ be a semigroup.  For all finite partitions $S = \cup_{i=1}^k S_i$, the sets $\cup_{i=1}^k S_iS_i^{-1}$ and $\cup_{i=1}^k S_i^{-1}S_i$ are syndetic.
\end{lemma}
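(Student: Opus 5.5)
The plan is to argue by duality between syndetic and thick sets, combined with \cref{lemma_delta_sets_in_thick_sets} and the pigeonhole principle. First I fix the meaning of the quotient sets, in accordance with the set-action notation of the paper with $S$ acting on itself by multiplication: $S_i^{-1}S_i = \{x \in S \mid \exists s \in S_i,\ sx \in S_i\}$, and symmetrically $S_iS_i^{-1} = \{x \in S \mid \exists s \in S_i,\ xs \in S_i\}$.

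The first step is the elementary observation that a set $A \subseteq S$ is syndetic if and only if its complement $B = S \setminus A$ is not thick. Indeed, for a finite $F \subseteq S$, we have $F^{-1}A \neq S$ exactly when there is $s \in S$ with $fs \notin A$ for all $f \in F$, that is, with $Fs \subseteq B$. So $A$ fails to be syndetic precisely when every finite $F$ admits such an $s$, which is the definition of $B$ being thick.

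For the set $A = \cup_{i=1}^k S_iS_i^{-1}$, suppose toward a contradiction that $B = S \setminus A$ is thick. I would apply part 1 of \cref{lemma_delta_sets_in_thick_sets} to $B$ with $k+1$ in place of $k$, obtaining $s_1, \ldots, s_{k+1}$ with $s_i \in Bs_j$ for all $i < j$. By pigeonhole, some $i < j$ have $s_i, s_j$ in the same cell $S_m$. Write $s_i = b s_j$ with $b \in B$. Then $s_j \in S_m$ and $b s_j \in S_m$, so $b \in S_mS_m^{-1} \subseteq A$, contradicting $b \in B$.

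For $A = \cup_{i=1}^k S_i^{-1}S_i$ the argument is identical, but it uses part 2 of \cref{lemma_delta_sets_in_thick_sets}. This gives $s_j = s_i h$ with $h \in B$, and taking $s_i, s_j$ in a common cell $S_m$ shows $h \in S_m^{-1}S_m \subseteq A$, again a contradiction.

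The only step requiring care is matching the left/right conventions. Syndeticity is defined through $F^{-1}A$ with left multiplication, while thickness is defined through $Fs$. The needed "delta" configuration must therefore have the $B$-factor on the side that lands it in the correct quotient set, which is exactly why the two parts of \cref{lemma_delta_sets_in_thick_sets} are paired with the two claims as above. Everything else is routine.
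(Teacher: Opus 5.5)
Your proof is correct and matches the paper's argument: the paper shows each union meets every thick set $H$ by applying the two parts of \cref{lemma_delta_sets_in_thick_sets} to $H$ with $k+1$ points and pigeonholing, exactly as you do with $H = S \setminus A$. The only difference is presentational. You argue by contradiction and explicitly verify the duality ``$A$ is syndetic if and only if $S \setminus A$ is not thick,'' which the paper uses without comment.
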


\begin{proof}
\leanok{}
    Let $S = \cup_{i=1}^k S_i$ be a finite partition of $S$.  We will show that the sets $\cup_{i=1}^k S_iS_i^{-1}$ and $\cup_{i=1}^k S_i^{-1}S_i$ have non-empty intersection with every thick set.  Let $H \subseteq S$ be thick.
    
    By \cref{lemma_delta_sets_in_thick_sets}, there exist $s_1, s_2, \ldots, s_{k+1} \in S$ such that for all $i < j$, $s_i \in H s_j$.  By the pigeonhole principle, there exists $m < n$ with $s_m, s_n \in S_i$.  Since $s_m \in H s_n$, there exists $h \in H$ such that $s_m = h s_n$.  Then $h \in S_i S_i^{-1}$, so $H \cap \cup_{i=1}^k S_iS_i^{-1} \neq \emptyset$, as was to be shown.

    By \cref{lemma_delta_sets_in_thick_sets}, there exist $s_1, \ldots, s_{k+1} \in S$ such that for all $i < j$, $s_j \in s_i H$.  By the pigeonhole principle, there exists $s_m, s_n \in S_i$.  Write $s_n = s_m h$ for some $h \in H$.  Then $h \in S_i^{-1} S_i$, so $H \cap \cup_{i=1}^k S_i^{-1}S_i \neq \emptyset$, as was to be shown.
\end{proof}

\subsection{Dynamics}
\label{sec_dynamics}

As in the introduction, we define a \emph{(topological dynamical) system} to be an action of a (discrete) semigroup $S$ on a compact Hausdorff space $X$ by continuous maps.  We call $X$ an \emph{$S$-system}.

The following construction appears frequently without special mention. Let $\varphi: S \to T$ be a semigroup homomorphism.
If $X$ is a $T$-system, then there is a natural way to make $X$ into an $S$-system: for $s \in S$ and $x \in X$, define $s : x \mapsto \varphi(s)x$.
Since $\varphi$ is a homomorphism, it is immediate that this yields an action of $S$ on $X$ via continuous maps.

\subsubsection{Visit times and orbits}

Given a point $x \in X$ and sets $U, V \subseteq X$, we write
\begin{align*}
    R(x,U) &\defeq \big\{s \in S \mid sx \in U\big\} \text{ and} \\
    R(V,U) &\defeq \big\{s \in S \mid sV \cap U \neq \emptyset\big\}
\end{align*}
for the set of times that $x$ visits $U$ and $V$ visits $U$, respectively.

\begin{lemma}
\label{lemma_return_time_algebra_one}
\lean{visitsToPreimages}
\leanok{}
    Let $S$ be a semigroup, $X$ be a $S$-system, $x \in X$, $U \subseteq X$, and $s \in S$. We have
    \[s^{-1}R(x,U) = R(x,s^{-1}U).\]
\end{lemma}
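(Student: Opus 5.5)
The identity $s^{-1}R(x,U) = R(x,s^{-1}U)$ is a direct unwinding of definitions, so I will prove it by double inclusion, chasing an element through the notation fixed in the subsection on sets. The one point to settle first is how to read $s^{-1}R(x,U)$. Here $R(x,U) \subseteq S$, and $s \in S$ acts on $S$ by left multiplication $t \mapsto st$. So, following the convention $s^{-1}A = \{x \mid sx \in A\}$, we have
\[
s^{-1}R(x,U) = \{t \in S \mid st \in R(x,U)\}.
\]
On the other side, $s^{-1}U = \{y \in X \mid sy \in U\}$ is a subset of $X$, and $R(x,s^{-1}U) = \{t \in S \mid tx \in s^{-1}U\}$.

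The key step is the chain of equivalences, valid for any $t \in S$:
\[
t \in s^{-1}R(x,U) \iff st \in R(x,U) \iff (st)x \in U \iff s(tx) \in U \iff tx \in s^{-1}U \iff t \in R(x,s^{-1}U).
\]
The only non-definitional step is $(st)x = s(tx)$, which is exactly the action axiom stated in the semigroups subsection. Every other step just unpacks the definition of $R(\cdot,\cdot)$ or of a preimage under $s$.

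I expect no real obstacle. The only care needed is to record that $s^{-1}$ applied to a subset of $S$ refers to the left-multiplication action of $S$ on itself. This matches the convention used in the definition of syndetic sets, $F^{-1}A$, where membership means $fs \in A$.
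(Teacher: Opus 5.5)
Your proof is correct and is exactly the definitional unwinding the paper intends; the paper leaves this lemma as an exercise. Your reading of $s^{-1}R(x,U)$ as $\{t \in S \mid st \in R(x,U)\}$ matches how the paper later uses the identity in the proof of \cref{lemma_return_time_algebra_three}, and the only substantive step is indeed the action axiom $(st)x = s(tx)$.
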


\begin{proof}
\leanok{}
    This is a simple exercise from the definitions and is left to the reader.
\end{proof}

The \emph{orbit} of $x$ is $Sx$, while $\overline{Sx}$ is its \emph{orbit closure}.  The following simple lemma describes an ``orbit closure transitivity'' that is used numerous times throughout the paper.

\begin{lemma}
\label{theorem_orbit_closure_transitivity}
\lean{orbitTransitivity}
\leanok{}
    Let $X$ be an $S$-system and $x,y,z \in X$. If $y \in \overline{Sx}$ and $z \in \overline{Sy}$, then $z \in \overline{Sx}$.
\end{lemma}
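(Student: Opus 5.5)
The plan is to first show $Sy \subseteq \overline{Sx}$, and then pass to closures.

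First, $\overline{Sx}$ is $S$-invariant. Fix $t \in S$. Since $S$ is a semigroup, $t(Sx) = \{tsx \mid s \in S\} \subseteq Sx$. The map $t : X \to X$ is continuous, so $t\overline{Sx} \subseteq \overline{tSx} \subseteq \overline{Sx}$. (Alternatively, \cref{lemma_image_closure_equals_closure_image} gives the equality $t\overline{Sx} = \overline{tSx}$.)

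Next, since $y \in \overline{Sx}$, invariance gives $sy \in \overline{Sx}$ for every $s \in S$, so $Sy \subseteq \overline{Sx}$. Taking closures, $\overline{Sy} \subseteq \overline{Sx}$, because $\overline{Sx}$ is closed. Since $z \in \overline{Sy}$, we conclude $z \in \overline{Sx}$.

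There is no real obstacle here. The only points needing care are that $S$ need not contain an identity, so $x$ itself need not lie in $Sx$, and that only continuity of each map $t$ is used, not surjectivity or openness. Neither point affects the argument above.
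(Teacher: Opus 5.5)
Your proof is correct and uses the same two ingredients as the paper: continuity of each map $s$ and the inclusion $sSx \subseteq Sx$. The paper unpacks these into a direct neighborhood argument (it pulls an open $U \ni z$ back to the open set $s^{-1}U \ni y$ and then finds $tx \in s^{-1}U$), whereas you package them as $S$-invariance of $\overline{Sx}$ and conclude $\overline{Sy} \subseteq \overline{Sx}$.
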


\begin{proof}
\leanok{}
    Let $U \subseteq X$ be open and contain $z$.  Since $z \in \overline{Sy}$, we have $U \cap Sy \neq \emptyset$, and so there exists $s \in S$ such that $y \in s^{-1}U$.  Since $y \in \overline{Sx}$ and $s^{-1}U$ is open, $s^{-1} U \cap Sx \neq \emptyset$.  Thus, there exists $t \in S$ such that $tx \in s^{-1} U$.  Therefore, $stx \in U$, and so $U \cap Sx \neq \emptyset$.  Since $U$ was arbitrary, we get that $z \in \overline{Sx}$, as was to be shown.
\end{proof}

\subsubsection{Factors and ICERS}

As defined in the introduction, a \emph{factor} of $X$ is an $S$-system $Y$ together with a continuous surjection $\pi: X \to Y$ that intertwines the actions of $S$.  An \emph{invariant, closed equivalence relation (ICER) on $X$} is an equivalence relation on $X$ that, as a subset of $X^2$, is closed and invariant under the diagonal action of $S$.  Given an ICER $R$, the quotient $X/R$ is, by \cref{lemma_quotient_is_compact_hausdorff}, a compact Hausdorff space.  The \emph{quotient system} is obtained by defining $S$ to act by $s[x]_R \defeq [sx]_R$.

There is a 1--1 correspondence between factors (up to isomorphism) and ICERs.
Indeed, on the one hand, given an ICER $R \subseteq X^2$, the quotient $X \to X/R$ is a factor of $X$.
On the other hand, given a factor $\pi: X \to Y$, we see that
\[R_\pi \defeq \big\{ (x,y) \in X^2 \mid \pi x = \pi y \big\}\]
is an ICER and that the $S$-systems $X/R_\pi$ and $Y$ are isomorphic: the map $X / R_\pi \to Y$ given by $[x]_{R_\pi} \mapsto \pi x$ is a homeomorphism that intertwines the actions of $S$.

\subsubsection{Subsystems, minimality and uniform recurrence}
\label{sec_subsystems_and_minimality}

Given a non-empty, closed, $S$-invariant set $Z \subseteq X$, we consider $Z$ as an $S$-system by restricting the maps $s \in S$ to $Z$.  This is a \emph{subsystem} of $X$.  The most frequently occurring subsystems in this paper are of the form $\overline{Sx}$ for a point $x \in X$.

As in the introduction, an $S$-system $X$ is \emph{minimal} if for all $x \in X$, the orbit $Sx$ is dense in $X$.  Two facts follow from standard arguments:
\begin{enumerate}
    \item \label{item_min_iff_triv_subsystems}
    A system $X$ is minimal if and only if $X$ is the only subsystem of $X$.
    \item Every system $X$ has a minimal subsystem, that is, a subsystem that is a minimal system.
\end{enumerate}
If $\pi: X \to Y$ is a factor map of $S$-systems and $Z$ is a subsystem of $X$, then $\pi Z$ is a subsystem of $Y$.  It follows from this and Fact \ref{item_min_iff_triv_subsystems}. that if $Z$ is a minimal subsystem of $X$, then $\pi Z$ is a minimal subsystem of $Y$.

\begin{lemma}
\label{lemma_return_time_algebra_three}
\lean{setVisitsAsQuotientSet}
\leanok{}
    Let $S$ be a semigroup, $X$ be a minimal $S$-system, $U, V \subseteq X$ open, and $x \in X$.  We have
    \[R(x,V)R(x,U)^{-1} = R(U,V).\]
\end{lemma}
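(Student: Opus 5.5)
The proof is a direct double inclusion, using minimality only for the reverse inclusion. We read the quotient notation as in the proof of \cref{lemma_syndetic_quotient_sets}, where $s_m = h s_n$ with $s_m, s_n \in S_i$ gave $h \in S_i S_i^{-1}$. That is, for $A, B \subseteq S$,
\[
AB^{-1} = \{s \in S \mid \exists\, b \in B,\ sb \in A\}.
\]
Unwinding the definitions, $s \in R(x,V)R(x,U)^{-1}$ means that there is $t \in S$ with $tx \in U$ and $stx \in V$. Also, $s \in R(U,V)$ means $sU \cap V \neq \emptyset$, i.e.\ $U \cap s^{-1}V \neq \emptyset$.

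For the inclusion $\subseteq$, no hypotheses on $X$ are needed. Let $s \in R(x,V)R(x,U)^{-1}$ and choose $t$ as above. Put $u \defeq tx$. Then $u \in U$, and $su = (st)x \in V$ by the action axiom $s(tx) = (st)x$. Hence $su \in sU \cap V$, so $s \in R(U,V)$.

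For the inclusion $\supseteq$, fix $s \in R(U,V)$. Since $sU \cap V \neq \emptyset$, there is $u \in U$ with $su \in V$. Thus $W \defeq U \cap s^{-1}V$ is non-empty. It is open because $s$ acts continuously and $U, V$ are open. By minimality, $Sx$ is dense in $X$, so there is $t \in S$ with $tx \in W$. Then $tx \in U$, so $t \in R(x,U)$. Also $(st)x = s(tx) \in V$, so $st \in R(x,V)$. Therefore $s \in R(x,V)R(x,U)^{-1}$.

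The only step that needs any care is this reverse inclusion. There we must note that $s^{-1}V$ is open, so that density of the orbit $Sx$ can be applied to the non-empty open set $W$; this is also the only place minimality enters. Everything else is bookkeeping with the definitions and the action axiom.
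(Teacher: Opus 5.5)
Your proof is correct and is essentially the paper's argument. The paper writes it as a single chain of equivalences rather than two inclusions, but it uses minimality at the same step as you do: finding $t$ with $tx \in U \cap s^{-1}V$.
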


\begin{proof}
\leanok{}
    Each assertion below is quickly seen to be equivalent to the one that follows it.  Minimality is used only to show that assertion c. implies assertion d. \cref{lemma_return_time_algebra_one} is used to show that e. and f. are equivalent.
    \renewcommand{\theenumi}{\alph{enumi}}
    \begin{enumerate}
        \item $s \in R(U,V)$.
        \item $sU \cap V$ is non-empty.
        \item $U \cap s^{-1} V$ is non-empty and open.
        \item There exists $t \in S$ such that $t \in R(x, U \cap s^{-1}V)$.
        \item There exists $t \in S$ such that $t \in R(x,U) \cap R(x,s^{-1}V)$.
        \item There exists $t \in S$ such that $t \in R(x,U) \cap s^{-1} R(x,V)$.
        \item There exists $t \in R(x,U)$ such that $t \in s^{-1} R(x,V)$.
        \item There exists $t \in R(x,U)$ such that $st \in R(x,V)$.
        \item There exists $t \in R(x,U)$ such that $s \in R(x,V) t^{-1}$.
        \item $s \in R(x,V) R(x,U)^{-1}$.
    \end{enumerate}
    The conclusion of the lemma follows because the first and last assertions are equivalent.
\end{proof}

A point $x \in X$ is called \emph{$S$-uniformly recurrent} if for all open $U \subseteq X$ containing $x$, the set $R(x,U)$ is syndetic. Two facts follow from standard arguments:
\begin{enumerate}
    \setcounter{enumi}{2}
    \item \label{item_ur_implies_min}
    If the point $x$ is uniformly recurrent, then its orbit closure $\overline{Sx}$ is a minimal $S$-system.
    
    \item \label{item_minimal_implies_ur}
    All points in a minimal system are uniformly recurrent.
\end{enumerate}
The facts above will be used throughout the paper without mention, save the following lemma, which we label for clarity.

\begin{lemma}
    \label{lemma_in_orb_clos_of_ur_implies_ur}
    \lean{inOrbitClosOfURPointImpliesUR}
    \leanok{}
    Let $S$ be a semigroup, $X$ be an $S$-system, and $x, y \in X$. If $x$ is $S$-uniformly recurrent and $y \in \overline{Sx}$, then $y$ is $S$-uniformly recurrent. 
\end{lemma}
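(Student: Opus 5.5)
The plan is to exploit the two standard facts just recorded: uniform recurrence of $x$ makes $\overline{Sx}$ a minimal system (Fact \ref{item_ur_implies_min}), and every point of a minimal system is uniformly recurrent (Fact \ref{item_minimal_implies_ur}). So it suffices to place $y$ inside a minimal subsystem and check that syndeticity of return times computed there transfers to $X$.

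\textbf{Step 1.} Since $x$ is $S$-uniformly recurrent, $Z \defeq \overline{Sx}$ is a minimal $S$-system. Here $Z$ is a subsystem of $X$: it is non-empty, closed, and $S$-invariant, because continuity gives $s\overline{Sx} \subseteq \overline{sSx} \subseteq \overline{Sx}$.

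\textbf{Step 2.} By hypothesis $y \in Z$. Since $Z$ is minimal, Fact \ref{item_minimal_implies_ur} applied to the system $Z$ shows that $y$ is uniformly recurrent in $Z$. That is, for every relatively open $U' \subseteq Z$ containing $y$, the set $\{s \in S \mid sy \in U'\}$ is syndetic.

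\textbf{Step 3.} Transfer this back to $X$. Let $U \subseteq X$ be open with $y \in U$, and put $U' = U \cap Z$, which is relatively open in $Z$ and contains $y$. Because $Sy \subseteq Z$, we have $sy \in U$ if and only if $sy \in U'$. Hence $R(x_0,U)$ computed in $X$ with $x_0 = y$ equals the return set into $U'$ computed in $Z$, and this set is syndetic by Step 2.

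The only step with real content is Fact \ref{item_minimal_implies_ur}, which is cited as standard. If it had to be proved directly, I would argue as follows. Let $U$ be an open set containing $y$, and choose an open $V \ni y$ with $\overline{V} \subseteq U$. By minimality, every $z \in Z$ has some $f_z \in S$ with $f_z z \in V$. The open sets $f_z^{-1}V$ cover the compact set $Z$, so finitely many suffice; call the corresponding finite set $F$. Then for each $s \in S$, applying this to $z = sy$ gives some $f \in F$ with $fsy \in V \subseteq U$. Thus $F^{-1}R(y,U) = S$, which is exactly syndeticity. I expect no genuine obstacle; the main care is the relative-topology bookkeeping in Step 3.
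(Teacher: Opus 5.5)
Your proposal is correct and takes the same approach as the paper, which derives the lemma immediately from the two standard facts you cite: $\overline{Sx}$ is minimal when $x$ is uniformly recurrent, and every point of a minimal system is uniformly recurrent. Your relative-topology bookkeeping in Step~3 and your compactness proof of the second fact (producing a finite $F$ with $F^{-1}R(y,U)=S$) are correct details that the paper leaves implicit.
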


\begin{proof}
\leanok{}
    This is an immediate consequence of Facts \ref{item_ur_implies_min}. and \ref{item_minimal_implies_ur}.
\end{proof}

\begin{lemma}
    \label{lemma_forward_backward_set_orb_clos_coincide}
    \lean{forwardBackwardSetOrbClosCoincideInBronsSys}
    \leanok{}
    Let $S$ be a semigroup and $X$ be an $S$-system. Suppose that the set of $S$-uniformly recurrent points in $X^2$ is dense (the system $X$ satisfies the \emph{Bronstein condition}).  For all thick sets $A, B \subseteq S$ and all non-empty, open set $W \subseteq X^2$,
    \[\overline{AW} = \overline{B^{-1} W}.\]
\end{lemma}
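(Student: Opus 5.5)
The plan is to prove the two inclusions $\overline{B^{-1}W} \subseteq \overline{AW}$ and $AW \subseteq \overline{B^{-1}W}$ separately. Each inclusion combines three ingredients: the Bronstein condition, to find a uniformly recurrent point of $X^2$ inside a suitable non-empty open set; \cref{lemma_thick_right_shift_is_thick}, to right-shift a thick set by a fixed element so that it stays thick; and \cref{lemma_synd_thick_dual}, to intersect a syndetic set of return times with that thick set. No commutativity is needed. Throughout, $S$ acts on $X^2$ diagonally, and each $s \in S$ acts on $X^2$ continuously.

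\emph{First inclusion.} Since $B^{-1}W = \bigcup_{b \in B} b^{-1}W$, it suffices to show $b^{-1}W \subseteq \overline{AW}$ for each $b \in B$. Fix $z \in b^{-1}W$ and an open set $O \ni z$. Shrinking $O$, we may assume $O \subseteq b^{-1}W$, which is open. By the Bronstein condition there is a uniformly recurrent point $u \in O$, so $bu \in W$. Uniform recurrence makes $R(u,O)$ syndetic, and \cref{lemma_thick_right_shift_is_thick} makes $Ab$ thick. By \cref{lemma_synd_thick_dual} there is $a \in A$ with $abu \in O$. But $abu = a(bu) \in AW$, so $O \cap AW \neq \emptyset$. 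As $O$ was an arbitrary neighborhood of $z$, we get $z \in \overline{AW}$.

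\emph{Second inclusion.} Fix $a \in A$, $w \in W$, and an open set $O \ni aw$. The set $V \defeq W \cap a^{-1}O$ is open and contains $w$, so by the Bronstein condition it contains a uniformly recurrent point $u$. Then $R(u,V)$ is syndetic, and by \cref{lemma_thick_right_shift_is_thick} the set $Ba$ is thick. By \cref{lemma_synd_thick_dual} there is $b \in B$ with $bau \in V \subseteq W$. Put $p \defeq au$. Since $u \in a^{-1}O$ we have $p \in O$, and since $bp = bau \in W$ we have $p \in B^{-1}W$. Hence $O \cap B^{-1}W \neq \emptyset$, so $aw \in \overline{B^{-1}W}$. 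This gives $AW \subseteq \overline{B^{-1}W}$, and taking closures yields $\overline{AW} \subseteq \overline{B^{-1}W}$.

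The only delicate point is choosing the right shifted thick set in each direction ($Ab$ in the first, $Ba$ in the second) and the right open set in which to place the uniformly recurrent point ($b^{-1}W$, respectively $W \cap a^{-1}O$), so that a single return time lands in both the target set and the required orbit shape. Once these choices are made, the argument reduces to the two routine checks above.
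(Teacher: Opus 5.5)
Your proposal is correct and is essentially the paper's argument. In both, the Bronstein condition supplies a uniformly recurrent point in $W \cap s^{-1}V$ or $V \cap s^{-1}W$, the right-shifted sets $Bs$ and $As$ are thick, and \cref{lemma_synd_thick_dual} produces the needed return time. The only difference is cosmetic: you prove $AW \subseteq \overline{B^{-1}W}$ and $B^{-1}W \subseteq \overline{AW}$ pointwise and then take closures, whereas the paper works directly with neighborhoods of points already in the closures.
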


\begin{proof}
\leanok{}
    Let $W \subseteq X^2$ an open and non-empty set and let $(x,y) \in \overline{AW}$. We will show $(x, y) \in \overline{B^{-1} W}$. For this, let $V \subseteq X^2$ be an open set containing $(x,y)$.  Since $V \cap AW \neq \emptyset$, there exists $s \in A$ such that $V \cap sW \neq \emptyset$.  Therefore, $W \cap s^{-1}V \neq \emptyset$. By assumption, there exists an $S$-uniformly recurrent point $(a,b) \in W \cap s^{-1}V$. By \cref{lemma_thick_right_shift_is_thick}, the set $Bs$ is thick. Since $(a,b)$ is $S$-uniformly recurrent, by \cref{lemma_synd_thick_dual}, we have $(a,b) \in \overline{Bs(a,b)}$.  Therefore, there exists $t \in B$ such that $ts(a,b) \in W$.  We see $s(a,b) \in V \cap t^{-1} W$, whereby $V \cap B^{-1} W \neq \emptyset$.  Since $V$ was an arbitrary open set containing $(x,y)$, we have that $(x,y) \in \overline{B^{-1} W}$.

    Conversely, let $(x,y) \in \overline{B^{-1}W}$, and let $V \subseteq X^2$ be an open set containing $(x,y)$.  Since $V \cap B^{-1}W \neq \emptyset$, there exists $s \in B$ such that $V \cap s^{-1}W \neq \emptyset$. By assumption, there exists an $S$-uniformly recurrent point $(a,b) \in V \cap s^{-1}W$. Since the set $As$ is thick (by \cref{lemma_thick_right_shift_is_thick}) and $(a,b)$ is $S$-uniformly recurrent, $(a,b) \in \overline{As(a,b)}$.  Therefore, there exists $t \in A$ such that $ts(a,b) \in V \cap tW$, whereby $tW \cap V \neq \emptyset$.  Therefore, $V \cap A W \neq \emptyset$.  Since $V$ was an arbitrary open set containing $(x,y)$, we have that $(x,y) \in \overline{A W}$.
\end{proof}

\subsubsection{Proximality, distality, and semisimplicity}

Points $x$ and $y$ in an $S$-system $X$ are \emph{proximal} if for every open set $\alpha \subseteq X^2$ containing the diagonal, there exists $s \in S$ such that $s(x,y) \in \alpha$.  We state the following simple lemmas for future reference.

\begin{lemma}
    \label{lemma_prox_pair_under_factor_map_is_prox}
    \lean{imageOfProxByFactorIsProx}
    \leanok{}
    Let $S$ be a semigroup and $\pi: X \to Y$ be a factor map of $S$-systems.  If $x, y \in X$ are proximal, then $\pi x, \pi y \in Y$ are proximal.
\end{lemma}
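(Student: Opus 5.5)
The plan is to use continuity of $\pi$ together with the fact that $\pi$ intertwines the actions. The key tool is that $(\pi \times \pi)^{-1}$ of an open neighborhood of the diagonal of $Y$ is an open neighborhood of the diagonal of $X$.

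Fix an open set $\beta \subseteq Y^2$ containing $\Delta_Y$. We must find $s \in S$ with $(s\pi x, s\pi y) \in \beta$.

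First, put $\alpha \defeq (\pi \times \pi)^{-1} \beta$. This set is open because $\pi \times \pi$ is continuous. It contains $\Delta_X$, since $(\pi \times \pi)(z,z) = (\pi z, \pi z) \in \Delta_Y \subseteq \beta$.

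Second, since $x$ and $y$ are proximal, there exists $s \in S$ with $(sx, sy) \in \alpha$, that is, $(\pi(sx), \pi(sy)) \in \beta$.

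Third, because $\pi$ is a factor map, $\pi(sx) = s\pi x$ and $\pi(sy) = s \pi y$. Hence $(s\pi x, s\pi y) \in \beta$. Since $\beta$ was arbitrary, $\pi x$ and $\pi y$ are proximal.

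There is no real obstacle. The only point needing care is checking that $\alpha$ is an admissible open set containing the diagonal, which is the first step above.
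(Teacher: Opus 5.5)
Your proof is correct. It is the routine argument the paper has in mind when it leaves this lemma as an easy exercise: pull back $\beta$ to the open neighborhood $(\pi\times\pi)^{-1}\beta$ of $\Delta_X$, apply proximality of $x$ and $y$, and use that $\pi$ intertwines the actions.
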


\begin{proof}
    \leanok{}
    This is an easy exercise left to the reader.
\end{proof}

\begin{lemma}
    \label{theorem_proximal_and_minimal_implies_orbit_closure_contains_diagonal}
    \lean{minSystemOrbitClosProxPairContainsDiag}
    \leanok{}
    Let $S$ be a semigroup, $X$ be a minimal $S$-system, and $x, y \in X$.  If $x$ and $y$ are proximal, then $\Delta_X$ is contained in the orbit closure of $(x,y)$ under $S$.
\end{lemma}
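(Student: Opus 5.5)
The plan is to show first that the closed set $\overline{S(x,y)}$ meets the diagonal, and then to use minimality of $X$ to push that intersection out to all of $\Delta_X$. The whole argument rests on \cref{theorem_orbit_closure_transitivity}.

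\textbf{Step 1: the orbit closure meets the diagonal.} Suppose $\overline{S(x,y)} \cap \Delta_X = \emptyset$. Since $\overline{S(x,y)}$ is closed, its complement $\alpha \defeq X^2 \setminus \overline{S(x,y)}$ is an open set containing $\Delta_X$. Proximality of $x$ and $y$ gives some $s \in S$ with $s(x,y) \in \alpha$. But $s(x,y) \in S(x,y) \subseteq \overline{S(x,y)}$, which is a contradiction. So there is a point $z \in X$ with $(z,z) \in \overline{S(x,y)}$.

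\textbf{Step 2: spread along the diagonal.} Let $w \in X$ be arbitrary. By minimality of $X$, $w \in \overline{Sz}$. This implies $(w,w) \in \overline{S(z,z)}$: for an open neighbourhood of $(w,w)$, shrink to a basic one $U \times V$. Then $U \cap V$ is an open set containing $w$, so some $s$ has $sz \in U \cap V$, and hence $s(z,z) \in U \times V$.

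Now apply \cref{theorem_orbit_closure_transitivity} to the diagonal action of $S$ on $X^2$, with the points $(x,y)$, $(z,z)$ and $(w,w)$. It gives $(w,w) \in \overline{S(x,y)}$. Since $w$ was arbitrary, $\Delta_X \subseteq \overline{S(x,y)}$.

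\textbf{Main obstacle.} The only step needing any care is Step 1, where the complement of the orbit closure must serve as the neighbourhood of the diagonal in the definition of proximality. This requires no compactness beyond the closedness of the orbit closure.
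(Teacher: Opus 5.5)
Your proof is correct: the paper leaves this lemma as an exercise, and your two steps are the natural way to fill it in. Step 1 is the same observation the paper asserts in the proof of \cref{lemma_distal_and_semisimple}, and Step 2 is a hands-on version of the argument the paper uses in \cref{lemma_frp_iff_nbhd_tran_int_contains_diagonal}: $\overline{S(x,y)} \cap \Delta_X$ is a non-empty, closed, invariant subset of the minimal system $\Delta_X$, so it equals $\Delta_X$.
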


\begin{proof}
\leanok{}
    This is an easy exercise left to the reader.
\end{proof}

A system $X$ is called \emph{distal} if no distinct points are proximal, and it is called \emph{semisimple} if every point of $X$ is $S$-uniformly recurrent.  While the product of semisimple systems is rarely semisimple, the product of distal systems is easily seen to be distal.  The following lemma is well-known in the group action setting.  We make use of a future result -- \cref{lemma_distal_implies_semisimple} -- since its proof is done efficiently with ultrafilters.

\begin{lemma}
\label{lemma_distal_and_semisimple}
\lean{distalIffDiagSemisimple}
    Let $S$ be a semigroup and $X$ be an $S$-system.  The system $X$ is distal if and only if the system $X^2$ is semisimple.
\end{lemma}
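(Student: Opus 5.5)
We prove the two implications separately. The paper says it will use a later result, \cref{lemma_distal_implies_semisimple}, which presumably states that every distal $S$-system is semisimple (proved with ultrafilters). We take that result as given.

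\emph{Distal implies $X^2$ semisimple.} Suppose $X$ is distal. Distality passes to products: if $(x_1,x_2)$ and $(y_1,y_2)$ are proximal in $X^2$, then projecting to each coordinate (a factor map) and applying \cref{lemma_prox_pair_under_factor_map_is_prox} shows that $x_i$ and $y_i$ are proximal in $X$. Hence $x_i = y_i$ for $i=1,2$, so $X^2$ is distal. By \cref{lemma_distal_implies_semisimple}, $X^2$ is semisimple.

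\emph{$X^2$ semisimple implies distal.} Suppose every point of $X^2$ is $S$-uniformly recurrent, and let $x,y \in X$ be proximal. We want to show $x = y$.

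1. The point $(x,y)$ is uniformly recurrent, so by Fact \ref{item_ur_implies_min} its orbit closure $Z \defeq \overline{S(x,y)}$ is a minimal subsystem of $X^2$.
2. Proximality means that for every open $\alpha \supseteq \Delta_X$ there is $s$ with $s(x,y) \in \alpha$. So the closed sets $Z \cap \overline{\alpha}$ are non-empty. By compactness, $\bigcap_\alpha (Z\cap\overline\alpha) = Z \cap \Delta_X$ is non-empty, using that in a compact Hausdorff space the closures of open neighborhoods of $\Delta_X$ intersect to $\Delta_X$ (this follows from \cref{lemma_closed_nbhd_prod_in_diag}-type regularity). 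Pick $(z,z) \in Z$.
3. The diagonal is closed and $S$-invariant, so $\overline{S(z,z)} \subseteq \Delta_X$. Minimality of $Z$ forces $Z = \overline{S(z,z)} \subseteq \Delta_X$. Since $(x,y) \in Z$, we get $x = y$.

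Note that step 1 alone does not suffice: $X$ itself need not be minimal, so \cref{theorem_proximal_and_minimal_implies_orbit_closure_contains_diagonal} cannot be applied to $X$ directly. Working inside the minimal set $Z$ avoids this.

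The main obstacle is the forward direction's reliance on \cref{lemma_distal_implies_semisimple}. If it had to be proved directly, the plan would be ultrafilter-based. Given $x$, take a minimal left ideal of $\beta S$ and an idempotent $u$ in it. Then $ux$ is uniformly recurrent, since it lies in a minimal subset of $\overline{Sx}$. Using $u(ux)=ux$, the pair $(x, ux)$ is proximal, so distality gives $x = ux$. Hence $x$ is uniformly recurrent. The converse direction is routine compactness.
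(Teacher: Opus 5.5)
Your proof is correct and follows essentially the same route as the paper: products of distal systems are distal, and \cref{lemma_distal_implies_semisimple} gives one direction; for the other, a diagonal point $(w,w)$ in the minimal orbit closure $\overline{S(x,y)}$ forces $(x,y)\in\overline{S(w,w)}\subseteq\Delta$. You spell out a few steps the paper leaves implicit, namely distality of $X^2$ via the coordinate projections and the compactness argument that the orbit closure of a proximal pair meets $\Delta$.
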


\begin{proof}
    Suppose $X$ is distal.  Since products of distal systems are distal, the system $X^2$ is distal.  We show in \cref{lemma_distal_implies_semisimple} below that distal systems are semisimple.
    
    Conversely, suppose $X^2$ is semisimple.  To show that $X$ is distal, we will show that if $x, y \in X$ are proximal, then $x = y$. Let $(x,y) \in X^2$ be a proximal pair.  There exists $w \in X$ such that $(w,w) \in \overline{S(x,y)}$.  Since $\overline{S(x,y)}$ is a minimal system, $(x,y) \in \overline{S(w,w)} \subseteq \Delta$.  Therefore, $x = y$, as was to be shown.
\end{proof}

\subsubsection{Equicontinuity and backward equicontinuity}
\label{sec_back_and_forward_equi}

As in the introduction, we define an $S$-system $X$ to be \emph{(uniformly) equicontinuous} if the set of maps $\{s: X \to X \mid s \in S\}$ is a (uniformly) equicontinuous family: for all open sets $\alpha \subseteq X^2$ containing the diagonal, there exists an open set $\beta \subseteq X^2$ containing the diagonal such that $S \beta \subseteq \alpha$.  It is quick to check that subsystems and arbitrary products of equicontinuous systems are equicontinuous.

In the proofs of \cref{thm_equi_surject_implies_semisimple,thm_backequi_implies_semisimple} below, we make use of the following elementary facts about compact uniform spaces $X$.
\begin{enumerate}
    \item Given $x \in X$ and an open set $U \subseteq X$ containing $x$, there exists an open set $\alpha \subseteq X^2$ containing the diagonal such that for all $y \in X$, if $(x,y) \in \alpha$, then $y \in U$.  Indeed, it is quick to check that the set
    \[\alpha = X^2 \setminus \big(\{x\} \times (X \setminus U) \big)\]
    satisfies the stipulated property.

    \item Given $\alpha \subseteq X^2$ an open set containing the diagonal, there exists a finite cover $X = \cup_i U_i$ of $X$ into non-empty, open sets such that for all $i$ and all $x, y \in U_i$, $(x,y) \in \alpha$. To do this, note that for all $w \in X$, there exists $V_w \subseteq X$ open and containing $w$ such that $V_w \times V_w \subseteq \alpha$.  By compactness, there exists a finite set $F \subseteq X$ such that $X = \cup_{w \in F} V_w$.
\end{enumerate}

A few of the results that follow can be found in \cite{auslander_dai_2019, dai_xiao_2017}.  We make specific references to these papers where appropriate.

\begin{theorem}[{cf. \cite[Lemma 2.9]{auslander_dai_2019}}]
\label{thm_equi_surject_implies_semisimple}
\lean{equiSurjectiveSystemsAreSemisimple}
\leanok{}
    Let $S$ be a semigroup and $X$ be an $S$-system.  If $X$ is equicontinuous and $S$ acts on $X$ by surjections, then $X$ is semisimple.
\end{theorem}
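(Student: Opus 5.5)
We must show: for every $x\in X$ and every open $U\ni x$, the set $R(x,U)$ is syndetic. The strategy is to use equicontinuity to pass from ``some neighborhood of $x$ is hit'' to ``a neighborhood of $x$ is hit from every point of a fixed finite cover'', and to use surjectivity to guarantee that points near $x$ are always reachable.

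\textbf{Step 1 (choice of entourages and cover).} Fix $x$ and $U$. By the first elementary fact above, choose an open $\alpha\supseteq\Delta$ such that $(x,y)\in\alpha$ implies $y\in U$. By equicontinuity, choose an open $\beta\supseteq\Delta$ with $S\beta\subseteq\alpha$. By the second elementary fact, cover $X=\bigcup_{i=1}^k U_i$ with non-empty open sets such that $U_i\times U_i\subseteq\beta$.

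\textbf{Step 2 (finite set of return times).} For each $i$, surjectivity gives that $X$ is not left unreached. The point is to find $f_i\in S$ with $f_i$ mapping some point of $U_i$ to $x$. Note that $S$ need not contain an identity, and surjectivity of a single map only says each $s$ has a preimage of $x$. So instead I use the following.

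Fix any $t\in S$. For $s\in S$, the point $tsx$ has a preimage under $ts$, namely $x$ itself. The cleaner route is to run the argument at a point $z$ with $sz=x$. Concretely, given $s\in S$, surjectivity of $s$ yields $z$ with $sz=x$. Then $z\in U_i$ for some $i$. If I had chosen in advance $w_i\in U_i$ and $f_i\in S$ with $f_iw_i$ close to $x$ in the sense that $f_iw_i\in U$, equicontinuity would transfer this to $z$. This is the main obstacle, since it needs $f_iU_i$ to meet a small set around $x$, which is not automatic without minimality.

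\textbf{Step 3 (resolving the obstacle via surjectivity of products).} Fix one $t\in S$. Each $U_i$ contains points; I instead index the cover on the other side. Let $y_i$ be any point of $U_i$, and set $F=\{t\}$. Given $s\in S$, the map $st$ is surjective, so pick $z$ with $stz=x$, and let $z\in U_i$. Then $(z,y_i)\in\beta$ gives $(x,sty_i)\in\alpha$, so $sty_i\in U$. This concerns $y_i$, not $x$, so it does not yet close the argument.

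The correct fix is to choose the cover points as preimages. Pick $p\in X$ with $tp=x$ (surjectivity of $t$). Given $s$, pick $z$ with $sz=p$ (surjectivity of $s$), so $tsz=x$. Equicontinuity must now relate $z$ to something involving $x$. I therefore expect the genuine argument to be: for each $i$ with $U_i\cap\overline{Sx}\neq\emptyset$, choose $f_i\in S$ with $f_ix\in U_i$. Given $s$, choose $z$ with $sz=x$. Then $z\in\overline{Sx}$ only if the orbit closure is reached; to ensure this, first replace $X$ by the compact invariant set $\bigcap_{s}sX=X$ restricted to $\overline{Sx}$. Surjectivity of $S$ on $\overline{Sx}$ itself must be checked, and I expect this to follow from equicontinuity, since equicontinuous orbit closures carry surjective actions. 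This is where the real difficulty lies. Granting it, $z\in U_i$ for some $i$, so $(z,f_ix)\in\beta$, hence $(x,sf_ix)\in\alpha$, so $sf_ix\in U$ and $f_is\in R(x,U)$ after commuting. Without commutativity I would instead use $F^{-1}$ with the convention $s f_i$, adjusting the syndeticity side accordingly.

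If the orbit-closure surjectivity fails, my fallback is to work with the net/ultrafilter limit $\overline{S}$ in $X^X$, which is a compact group when $X$ is equicontinuous and surjective. Semisimplicity then follows from the standard Ellis argument.
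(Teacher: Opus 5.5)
Your main line has a genuine gap exactly where you flag it, and the gap is not a technicality. The argument needs, for every $s\in S$, a point $z\in\overline{Sx}$ with $sz=x$. Equivalently, $x\in s\overline{Sx}=\overline{sSx}$ for all $s$ (by \cref{lemma_image_closure_equals_closure_image}). That is a recurrence property of $x$. It already includes $x\in\overline{Sx}$, which is not automatic since $S$ need not have an identity. Surjectivity of $S$ on $X$ says nothing about where the preimages of $x$ lie. The property does hold a posteriori: it follows from semisimplicity together with \cref{thm_main_equi_backward_equi_thm}, since backward equicontinuity passes to $\overline{Sx}$ and then \cref{lemma_backward_eq_implies_surjective} applies. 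But the proof of \cref{thm_main_equi_backward_equi_thm} applies the present theorem to $X^2$, so granting it is circular.

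Even granted, you only get that for each $s$ some $f_i$ satisfies $sf_i\in R(x,U)$. Syndeticity here means $F^{-1}R(x,U)=S$, i.e.\ $fs\in R(x,U)$ for some $f\in F$. The theorem is stated for arbitrary semigroups, so ``after commuting'' is unavailable, and switching sides proves a different property from the one defining uniform recurrence.

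The Ellis fallback is only a slogan as written. It can be made rigorous in the paper's language. Take a minimal idempotent $p\in\beta S$. Equicontinuity makes $y\mapsto py$ continuous, and compactness plus surjectivity of each $s$ makes it surjective. An idempotent surjection is the identity, so $x=px$, which is uniformly recurrent. None of these steps is in your proposal, and they are the entire content.

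The idea missing from your direct attempt is how to turn information about fixed reference points into information about $x$. This is exactly where your Step 3 stalled (``this concerns $y_i$, not $x$''). The paper fixes $w_i\in U_i$ and colors $S$ with finitely many colors: the color of $s$ records, for each $i$, a cell $U_j$ containing $sw_i$. Suppose $s$ and $ts$ have the same color, and write $x=sy$ with $y\in U_i$.

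\begin{itemize}
\item Equicontinuity gives $sy\approx sw_i$ and $tsy\approx tsw_i$.
\item The coloring gives $sw_i\approx tsw_i$.
\item Composing (with $\alpha'\circ\alpha'\circ\alpha'\subseteq\alpha$) gives $x=sy\approx tsy=tx$, i.e.\ $t\in R(x,U)$.
\end{itemize}

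Thus $\bigcup_j S_jS_j^{-1}\subseteq R(x,U)$. By the pigeonhole argument of \cref{lemma_syndetic_quotient_sets}, this union is syndetic on the correct side, for any semigroup. Only surjectivity on $X$ is used, and no recurrence of $x$ is assumed in advance.
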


\begin{proof}
\leanok{}
    Suppose that $X$ is equicontinuous and that $S$ acts on $X$ by surjections.  We must show that all points of $X$ are $S$-uniformly recurrent.  Let $x \in X$ and $W \subseteq X$ be an open set containing $x$.  We will show that the set $R(x,W)$ is syndetic.

    By the note prior to the statement of the theorem, there exists an open set $\alpha \subseteq X$ containing the diagonal such that for all $w \in X$, if $(x,w) \in \alpha$, then $w \in W$.
    Let $\alpha' \subseteq X^2$ be an open, symmetric set containing the diagonal such that $\alpha' \circ \alpha' \circ \alpha' \subseteq \alpha$.
    By equicontinuity, there exists $\gamma \subseteq X^2$ an open set containing the diagonal such that $S \gamma \subseteq \alpha'$.

    By the note prior to the statement of the theorem, there exists a finite cover $X = \cup_{i} U_i$ of $X$ by open sets with the property that if $a, b \in U_i$, then $(a,b) \in \gamma$.  For each $i$, fix $w_i \in U_i$.
    Define $\varphi: S \to F^{F}$ so that for all $s \in S$ and $i$, $s w_i \in V_{\varphi_s w_i}$.  Let $S = \cup_{j} S_j$ be the finite partition of $S$ into the level sets of $\varphi$. 

    We claim that $\cup_j S_j S_j^{-1} \subseteq R(x,W)$.  Indeed, suppose $t \in S_j S_j^{-1}$ so that there is $s \in S_j$ such that $ts \in S_j$.  By surjectivity of $s$, write $x = sy$.  Then $y \in V_i$ for some $i$, and so $(y, w_i) \in \gamma \subseteq \alpha'$.  By equicontinuity, $(sy, sw_i), (ts y, ts w_i) \in \alpha'$.  Since $s$ and $ts$ have the same $\varphi$-value, we have that $(s w_i, ts w_i) \in \gamma \subseteq \alpha'$.  Composing and using symmetry, we have that $(s y, ts y) \in \alpha$.  Since $sy = x$, we see that $(x, tx) \in \alpha$, whereby $tx \in W$, whereby $t \in R(x,W)$, as desired.

    By \cref{lemma_syndetic_quotient_sets}, the set $\cup_{j} S_j S_j^{-1}$ is syndetic, hence the set $R(x,W)$ is syndetic, as was to be shown.
\end{proof}

Later on, it will be convenient to define a system to be \emph{backward equicontinuous} if for all open sets $\alpha \subseteq X^2$ containing the diagonal, there exists an open set $\beta \subseteq X^2$ containing the diagonal such that $S^{-1} \beta \subseteq \alpha$.  It is quick to check that products of backward equicontinuous systems are backward equicontinuous.

\begin{lemma}
\label{lemma_backward_eq_implies_surjective}
\lean{backEqImpliesSurjective}
    Let $S$ be a semigroup and $X$ be an $S$-system.  If $X$ is backward equicontinuous, then $S$ acts on $X$ by surjections.
\end{lemma}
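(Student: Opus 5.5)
We argue by contradiction, exploiting compactness and Hausdorffness of $X$. Fix $s \in S$ and suppose there is a point $z \in X \setminus sX$. Since $s$ is continuous and $X$ is compact, the image $sX$ is compact, hence closed in the Hausdorff space $X$. Thus $z$ and the closed set $sX$ are separated: by regularity there is an open set $U \ni z$ whose closure is disjoint from $sX$.

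Now build an open set containing the diagonal that pulls back too little. Set
\[
    \alpha \defeq X^2 \setminus \bigl(\{z\} \times sX\bigr).
\]
It is open, being the complement of a closed set. It contains the diagonal: if $(w,w) \in \{z\}\times sX$, then $w = z$, yet $z \notin sX$. We do not apply backward equicontinuity to $\alpha$ itself. Rather, we apply it to an arbitrary $\alpha$ to produce $\beta$, then examine $s^{-1}\beta \subseteq \alpha$ at a cleverly chosen pair.

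The cleaner route is as follows. Let $\alpha_0 \subseteq X^2$ be an open set containing the diagonal such that $(z,w) \in \alpha_0$ implies $w \in U$; this is elementary fact 1 before \cref{thm_equi_surject_implies_semisimple}. Backward equicontinuity gives an open $\beta \supseteq \Delta$ with $S^{-1}\beta \subseteq \alpha_0$, and in particular $s^{-1}\beta \subseteq \alpha_0$.

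Pick any $y \in X$ and consider the pair $(z, y)$. We would like $(sz, sy) \in \beta$, which would give $(z,y) \in s^{-1}\beta \subseteq \alpha_0$ and hence $y \in U$. This cannot hold for all $y$, so we need freedom. Choose $y$ with $sy$ close to $sz$, and in fact take $y = z$? That gives nothing.

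The point to use instead is that $s^{-1}\beta$ contains every pair with the same image under $s$. More generally, $s^{-1}\beta \supseteq \{(a,b) : sa = sb\}$, which is only useful if $s$ fails to be injective. So we need a different pair. Take $a \in X$ arbitrary and consider $(z, a)$ versus its image $(sz, sa)$. This still needs $sz$ near $sa$.

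The right fix is to use an $S$-orbit point near $z$ that lies in the image. The approach I will commit to is this: note that $\beta$ contains a neighborhood $V \times V$ of $(sz, sz)$, and $s^{-1}(V\times V) = s^{-1}V \times s^{-1}V \subseteq \alpha_0$. Since $z \in s^{-1}V$, every $w \in s^{-1}V$ lies in $U$. But $s^{-1}V$ is merely an open neighborhood of $z$, so no contradiction arises yet.

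To get one, swap the roles. Choose $\alpha_0$ to require $(w, sz') $-type closeness that forces a point of $sX$ near $z$. Concretely, apply backward equicontinuity with $\alpha_0 = X^2\setminus(\{z\}\times sX)$ paired with the point $sz$: we have $(sz, sz)\in\beta$, so $(z,z)\in s^{-1}\beta$. To involve $sX$ we instead use $(sz, s(sz))$ after choosing $\beta$ from uniform closeness.

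The main obstacle, then, is producing a pair $(z, sx)$ lying in $S^{-1}\beta$. I expect to use $\beta$-closeness of $s$-images via compactness: cover $X$ by finitely many $\beta$-small sets, and use a pigeonhole argument on the sequence $s, s^2, \ldots$ to find $m<n$ with $s^m z$ and $s^n z$ in the same set. Then $(s^{n-m}z', z') $-type pairs lie in $(s^m)^{-1}\beta \subseteq \alpha_0$ with $z'=z$, $s^{n-m}z\in sX$. This yields $(z, s^{n-m}z) \in \alpha_0$, so $s^{n-m} z \notin sX$, a contradiction. This pigeonhole step is the crux.
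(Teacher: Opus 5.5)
Your final committed argument is correct and is essentially the paper's proof: pigeonhole $sz, s^2z, \ldots$ into a finite cover by $\beta$-small sets to get $(z, s^{n-m}z) \in (s^m)^{-1}\beta \subseteq \alpha_0$, where $s^{n-m}z \in sX$. The paper argues directly, showing that $\{x\}\times sX$ meets every open $\alpha \supseteq \Delta$; you argue by contradiction with $\alpha_0 = X^2\setminus(\{z\}\times sX)$, which makes explicit the closedness of $sX$ that the paper uses implicitly. The earlier exploratory attempts (the sets $U$ and $V\times V$, and the pair $(sz, s(sz))$) are dead ends and should be cut from a written proof.
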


\begin{proof}
\leanok{}
    Let $s \in S$ and $x \in X$.  We will show that $x \in sX$ by showing that for all open $\alpha \subseteq X^2$ containing the diagonal, $(\{x\} \times sX) \cap \alpha \neq \emptyset$.

    Let $\alpha \subseteq X^2$ be open and contain the diagonal.  By backward equicontinuity, we get an open set $\beta \subseteq X^2$ containing the diagonal such that $S^{-1} \beta \subseteq \alpha$.  By the note prior to the statement of \cref{thm_equi_surject_implies_semisimple}, there exists a finite cover $X = \cup_{i} U_i$ by open sets so that $w, z \in U_i$ implies $(w,z) \in \beta$.
    
    By the piegonhole principle, there exists $m < n$ and $i$ such that $s^mx, s^nx \in U_i$.  Then $s^m (x, s^{n-m} x) \in \beta$.  By backward equicontinuity, $(x, s s^{n-m-1}x) \in S^{-1}\beta \subseteq \alpha$.  Since $s s^{n-m-1}x \in sX$, we see that $(\{x\} \times sX) \cap \alpha \neq \emptyset$, as desired.
\end{proof}

\begin{theorem}
\label{thm_backequi_implies_semisimple}
\lean{backEquiSystemsAreSemisimple}
\leanok{}
    Let $S$ be a semigroup and $X$ be an $S$-system.  If $X$ is backward equicontinuous, then $X$ is semisimple.
\end{theorem}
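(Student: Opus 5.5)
The plan is to mirror the proof of \cref{thm_equi_surject_implies_semisimple}, with backward equicontinuity taking the place of the combination of equicontinuity and surjectivity. Fix $x \in X$ and an open set $W \subseteq X$ containing $x$; the goal is to show that $R(x,W)$ is syndetic. By the first elementary fact stated before \cref{thm_equi_surject_implies_semisimple}, choose an open set $\alpha \subseteq X^2$ containing the diagonal such that $(x,w) \in \alpha$ implies $w \in W$. Replacing $\alpha$ by $\alpha \cap \alpha^{-1}$ if necessary, take $\alpha$ symmetric. By backward equicontinuity, choose an open set $\beta$ containing the diagonal with $S^{-1}\beta \subseteq \alpha$. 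By the second elementary fact, fix a finite open cover $X = \cup_{i=1}^k U_i$ such that any two points of the same $U_i$ form a pair in $\beta$.

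Next, partition $S$ according to where $x$ lands. Define $\varphi : S \to \{1,\dots,k\}$ by choosing, for each $s$, an index $\varphi(s)$ with $sx \in U_{\varphi(s)}$. Let $S = \cup_j S_j$ be the finite partition of $S$ into the level sets of $\varphi$. The key claim is that
\[
\cup_j S_j^{-1} S_j \subseteq R(x,W).
\]
To prove it, suppose $s, st \in S_j$, so that $t \in S_j^{-1}S_j$. Then $sx$ and $s(tx) = (st)x$ both lie in $U_j$, hence $s(x,tx) \in \beta$. Thus $(x,tx) \in s^{-1}\beta \subseteq S^{-1}\beta \subseteq \alpha$, which gives $tx \in W$, that is, $t \in R(x,W)$.

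Notice that here, unlike in \cref{thm_equi_surject_implies_semisimple}, backward equicontinuity pulls the closeness at time $s$ back to time $0$ directly. So neither surjectivity nor a triangle-inequality composition of entourages is needed, and only a single point $x$ has to be tracked. Finally, \cref{lemma_syndetic_quotient_sets} shows that $\cup_j S_j^{-1}S_j$ is syndetic. Any superset of a syndetic set is syndetic, since $F^{-1}A = S$ is preserved under enlarging $A$. Hence $R(x,W)$ is syndetic, and since $x$ and $W$ were arbitrary, $X$ is semisimple.

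The main point needing care is matching the side of the quotient set to the side on which the semigroup acts. With $s, st \in S_j$ the product $st$ acts as $s \circ t$, so $t$ is applied first. The relevant set is therefore $S_j^{-1}S_j = \{t \mid \exists s\in S_j,\ st \in S_j\}$ and not $S_jS_j^{-1}$. \cref{lemma_syndetic_quotient_sets} covers both versions, so either orientation works once the bookkeeping is done correctly. Alternatively, one could invoke \cref{lemma_backward_eq_implies_surjective} and attempt a reduction to the earlier theorem, but that would additionally require forward equicontinuity, which is not available; the direct argument above avoids this.
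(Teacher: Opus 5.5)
Your proof is correct and is essentially the paper's argument. You choose $\alpha$ so that $(x,w)\in\alpha$ forces $w\in W$, take an entourage from backward equicontinuity and a fine finite cover, sort $S$ by where $x$ lands, show $\bigcup_j S_j^{-1}S_j \subseteq R(x,W)$, and finish with the syndetic quotient-set lemma. Where you differ, the changes are improvements: you apply $S^{-1}\beta\subseteq\alpha$ directly to the entourage supplied by backward equicontinuity, whereas the paper first builds the auxiliary $S^{-1}$-invariant set $(\alpha\cap\gamma)\cup S^{-1}\gamma$, which the argument does not need; and your level-set partition matches that lemma's partition hypothesis more precisely than the paper's cover $S_i = R(x,U_i)$.
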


\begin{proof}
\leanok{}
    We will show that all points of $X$ are $S$-uniformly recurrent.  Let $x \in X$ and $W \subseteq X$ be an open set containing $x$.  We will show that the set $R(x,W)$ is syndetic.

    By the note prior to the statement of \cref{thm_equi_surject_implies_semisimple}, there exists an open set $\alpha \subseteq X$ containing the diagonal such that for all $w \in X$, if $(x,w) \in \alpha$, then $w \in W$.
    By backward equicontinuity, there exists $\gamma \subseteq X^2$ an open set containing the diagonal such that $S^{-1} \gamma \subseteq \alpha$.  Define $\beta = (\alpha \cap \gamma) \cup S^{-1} \gamma$.  We see that $\beta$ contains an open set containing the diagonal.  We claim that $S^{-1} \beta \subseteq \beta$.  Indeed,
    \[S^{-1} \beta \subseteq S^{-1} \gamma \cup S^{-1} S^{-1} \gamma \subseteq S^{-1} \gamma \subseteq \beta.\]

    By the note prior to the statement of \cref{thm_equi_surject_implies_semisimple}, there exists a finite cover $X = \cup_{i} U_i$ of $X$ by open sets so that if $a, b \in U_i$, then $(a,b) \in \beta$. Define $S_i \defeq R(x,U_i)$, and note that $S = \cup_{i} S_i$.  We claim that $\cup_{i=1}^k S_i^{-1} S_i \subseteq R(x,W)$.  Indeed, suppose $t \in S_i^{-1} S_i$ so that there exists $s \in S_i$ for which $st \in S_i$.  By the definition of $S_i$, $sx, stx \in U_i$.  By the property of $U_i$, we get that $s(x,tx) \in \beta$.  Since $S^{-1} \beta \subseteq \beta$, we get that $(x,tx) \in \beta \subseteq \alpha$.  By the property of $\alpha$, $tx \in W$, implying $t \in R(x,W)$, as desired.

    By \cref{lemma_syndetic_quotient_sets}, the set $\cup_{i} S_i^{-1} S_i$ is syndetic, hence the set $R(x,W)$ is syndetic, as was to be shown.
\end{proof}

\begin{lemma}[{cf. \cite[Lemma 1.9]{auslander_dai_2019}}]
\label{lemma_easy_char_of_equi_and_backward_equi}
\lean{equiSystemIffRPMInDiag}
\lean{backEquiSystemIffRPInDiag}
    Let $S$ be a semigroup and $X$ be an $S$-system.  We have that
    \begin{align*}
        \text{$X$ is equicontinuous} \qquad &\Longleftrightarrow \qquad \bigcap_{\alpha} \overline{S \alpha} \ \subseteq \ \Delta, \text{ and that}\\
        \text{$X$ is backward equicontinuous} \qquad &\Longleftrightarrow \qquad \bigcap_{\alpha} \overline{S^{-1} \alpha} \ \subseteq \ \Delta,
    \end{align*}
    where the intersections are over all open sets $\alpha \subseteq X^2$ containing the diagonal.
\end{lemma}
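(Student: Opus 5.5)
Both equivalences are proved the same way, with the pair $(S,\ S\alpha)$ in the first replaced by $(S^{-1},\ S^{-1}\alpha)$ in the second. We use only compactness of $X^2$, the fact that the open neighbourhoods of $\Delta$ form a base of the unique uniformity, and the observation that a set $E\subseteq X^2$ containing the diagonal contains an open set $\alpha\supseteq\Delta$ with $\overline{\alpha}\subseteq E$ (\cref{lemma_closed_nbhd_prod_in_diag} together with compactness). The plan for the forward statement follows.

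($\Rightarrow$) Suppose $X$ is equicontinuous, and let $(x,y)\notin\Delta$. Choose an open $\gamma\supseteq\Delta$ with $(x,y)\notin\overline{\gamma}$; for instance, take disjoint neighbourhoods $U\ni x$, $V\ni y$ and set $\gamma = X^2\setminus \overline{U'}\times\overline{V'}$ for smaller neighbourhoods $U'\ni x$, $V'\ni y$ with disjoint closures. Equicontinuity gives an open $\alpha\supseteq\Delta$ with $S\alpha\subseteq\gamma$. Hence $\overline{S\alpha}\subseteq\overline{\gamma}$, which does not contain $(x,y)$. Since $(x,y)\notin\Delta$ was arbitrary, $\bigcap_\alpha\overline{S\alpha}\subseteq\Delta$.

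($\Leftarrow$) Suppose $\bigcap_\alpha\overline{S\alpha}\subseteq\Delta$, and fix an open $\beta\supseteq\Delta$. The closed sets $\overline{S\alpha}\cap(X^2\setminus\beta)$, as $\alpha$ ranges over open sets containing $\Delta$, have empty intersection, because the full intersection lies in $\Delta\subseteq\beta$. The family is downward directed: $\overline{S(\alpha_1\cap\alpha_2)}\subseteq\overline{S\alpha_1}\cap\overline{S\alpha_2}$, and $\alpha_1\cap\alpha_2$ is again open and contains $\Delta$. By compactness of $X^2$, some single $\alpha$ satisfies $\overline{S\alpha}\cap(X^2\setminus\beta)=\emptyset$, that is, $S\alpha\subseteq\beta$. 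This is precisely equicontinuity.

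The backward statement is the same argument word for word, with $S\alpha$ replaced by $S^{-1}\alpha=\bigcup_s s^{-1}\alpha$. Directedness still holds, since $S^{-1}(\alpha_1\cap\alpha_2)\subseteq S^{-1}\alpha_1\cap S^{-1}\alpha_2$. The definition of backward equicontinuity is exactly "$\forall\beta\ \exists\alpha$ with $S^{-1}\alpha\subseteq\beta$". No surjectivity or minimality is needed.

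The only point requiring care is that in ($\Rightarrow$) the target neighbourhood $\gamma$ must exclude $(x,y)$ from its \emph{closure}, not merely from $\gamma$ itself. The choice of $U',V'$ with disjoint closures, which uses regularity of the compact Hausdorff space, handles this. Everything else is a routine finite-intersection argument.
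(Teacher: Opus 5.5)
Your proof is correct and is essentially the paper's argument. For the forward direction you use the same ``closure of a smaller neighbourhood'' step, though you separate each off-diagonal point directly rather than showing the intersection lies in every open $\gamma \supseteq \Delta$; the reverse direction is the same compactness/finite-intersection argument, and the backward case again just replaces $S$ by $S^{-1}$. One small slip: your preliminary observation holds only for \emph{open} $E \supseteq \Delta$ (it fails for $E = \Delta$), but you never actually use it.
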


\begin{proof}
\leanok{}
    Suppose $X$ is an equicontinuous $S$-system.  To see that $\cap_{\alpha} \overline{S \alpha} \subseteq \Delta$, let $\gamma \subseteq X^2$ be open and contain $\Delta$.  By regularity, there exists $\beta \subseteq X^2$ open and containing $\Delta$ such that $\overline{\beta} \subseteq \gamma$.  By the definition of equicontinuity, there exists $\alpha \subseteq X^2$ open and containing $\Delta$ such that $S \alpha \subseteq \beta$.  We see $\overline{S \alpha} \subseteq \overline{\beta} \subseteq \gamma$, as was to be shown.

    Conversely, suppose $\cap_{\alpha} \overline{S \alpha} \subseteq \Delta$.  To see that $X$ is equicontinuous, let $\alpha \subseteq X^2$ be an open set containing the diagonal. We have
    \[
        \bigcap_{\gamma} \overline{S \gamma} \subseteq \Delta \subseteq \alpha,
    \]
    where the intersection is over all open sets $\gamma \subseteq X^2$ containing $\Delta$. 
    It follows that 
    \[
        X^2 \setminus \alpha \subseteq \bigcup_{\gamma} X^2 \setminus \overline{S \gamma}.
    \]
    Since $X^2 \setminus \alpha$ is compact, there exists a finite set $F$ of open subsets of $X^2$ containing $\Delta$ such that
    \[
        X^2 \setminus \alpha \subseteq \bigcup_{\gamma \in F} X^2 \setminus \overline{S \gamma},
    \]
    whereby $\bigcap_{\gamma \in F} \overline{S \gamma} \subseteq \alpha$.
    Let $\beta = \bigcap_{\gamma \in F} \gamma$. 
    Then $\beta \subseteq X^2$ an open set containing the diagonal and $S \beta \subseteq \bigcap_{\gamma \in F} \overline{S \gamma} \subseteq \alpha$, as was to be shown.

    For the characterization of backward equicontinuity, the same argument works by replacing $S$ with $S^{-1}$.
\end{proof}

In the following theorem, note that both condition 1. and 2. hold under products, and hence the conclusion holds immediately for all products of the system $X$ with itself.

\begin{theorem}
\label{thm_main_equi_backward_equi_thm}
\lean{equiSurjectiveIffBackEqui}
\lean{surjEquiOrBackEquiImpliesEquiBackEquiDistalSemisimpleHomeo}
\leanok{}
    Let $S$ be a semigroup and $X$ be an $S$-system.  The following are equivalent:
    \begin{enumerate}
        \item the system $X$ is equicontinuous and $S$ acts on $X$ by surjections;
        \item the system $X$ is backward equicontinuous.
    \end{enumerate}
    If either (equivalently, both) of these conditions hold, then $S$ acts on $X$ by homeomorphisms and $X$ is equicontinuous, backward equicontinuous, distal, and semisimple.
\end{theorem}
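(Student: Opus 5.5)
We prove 1.\ $\Rightarrow$ 2.\ and 2.\ $\Rightarrow$ 1., and then derive the additional properties.

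\emph{1.\ $\Rightarrow$ 2.} Assume $X$ is equicontinuous and $S$ acts by surjections. By \cref{thm_equi_surject_implies_semisimple}, $X$ is semisimple, and the same holds for $X^2$, since both hypotheses pass to products. We use the characterization of backward equicontinuity in \cref{lemma_easy_char_of_equi_and_backward_equi}. Let $(x,y) \in \bigcap_\alpha \overline{S^{-1}\alpha}$; we must show $x = y$. Suppose instead $x \neq y$. Choose an open $\alpha$ containing $\Delta$ with $(x,y) \notin \overline{\alpha}$. By equicontinuity, pick $\beta$ such that $S\beta \subseteq \alpha$; by regularity, we may take $\overline{S\beta} \subseteq \overline{\alpha}$.

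Since $(x,y) \in \overline{S^{-1}\beta}$, for each neighborhood $V$ of $(x,y)$ there is $s$ with $s V \cap \beta \ne \emptyset$. Thus points near $(x,y)$ are mapped by some $s$ into $\beta$. The pair $(x,y)$ is $S$-uniformly recurrent in $X^2$, and $R((x,y),V)$ is syndetic. We now argue in the style of \cref{lemma_forward_backward_set_orb_clos_coincide}. The set $\{t : t\cdot(\text{point}) \in V\}$ is syndetic, and we pair this with a thick set of shifts to obtain $t$ and $s$ with $t s(\cdot)$ returning near $(x,y)$ while $s(\cdot) \in \beta$. Then $ts(\cdot) \in S\beta \subseteq \alpha$, so $(x,y) \in \overline{S\beta} \subseteq \overline\alpha$, a contradiction.

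Concretely, pick $(a,b) \in V \cap s^{-1}\beta$. The point $s(a,b)$ is uniformly recurrent, hence lies in the orbit closure of $S s(a,b)$, and it lies in the minimal set $\overline{S(a,b)}$. By minimality of $\overline{S(a,b)}$, which contains $(a,b) \in V$, there is $t$ with $ts(a,b) \in V$. Since $s(a,b) \in \beta$, we get $ts(a,b) \in S\beta \cap V$. This is the key step; the only input needed is semisimplicity of $X^2$.

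\emph{2.\ $\Rightarrow$ 1.} Surjectivity is \cref{lemma_backward_eq_implies_surjective}, and semisimplicity of $X$ and $X^2$ is \cref{thm_backequi_implies_semisimple} applied to the backward equicontinuous product. Equicontinuity follows symmetrically: take $(x,y) \in \bigcap_\alpha \overline{S\alpha}$ with $x \neq y$, and choose $\beta$ with $S^{-1}\beta \subseteq \alpha$. Pick $(a,b) \in \beta$ with $s(a,b) \in V$. Since $(a,b)$ is uniformly recurrent, there is $t$ with $ts(a,b)$ near $(a,b)$, so in $\beta$ once $\beta$ is shrunk to an open set. Then $s(a,b) \in V \cap S^{-1}\beta \subseteq V \cap \alpha$. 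Hence $(x,y) \in \overline\alpha$, and letting $\alpha$ shrink gives $x = y$.

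\emph{Consequences.} Distality follows from \cref{lemma_distal_and_semisimple}, since $X^2$ is semisimple. For injectivity, if $sx = sy$ then $(x,y) \in s^{-1}\Delta \subseteq S^{-1}\beta \subseteq \alpha$ for every $\alpha$, so $x = y$. A continuous bijection of a compact Hausdorff space is a homeomorphism.

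The main obstacle is the orbit-closure step. It uses that the minimal set through $(a,b)$ returns from $s(a,b)$ back into $V$, which requires uniform recurrence of $(a,b)$ in $X^2$ rather than in $X$.
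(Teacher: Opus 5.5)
Your proof is correct and follows the paper's route. You get semisimplicity of $X^2$ from \cref{thm_equi_surject_implies_semisimple} or \cref{thm_backequi_implies_semisimple}, run the orbit-closure argument of \cref{lemma_forward_backward_set_orb_clos_coincide} inline (with $A=B=S$, using that every point of $X^2$ is uniformly recurrent), and finish with the characterizations in \cref{lemma_easy_char_of_equi_and_backward_equi}. The only deviation is that you get injectivity directly from backward equicontinuity ($s^{-1}\Delta \subseteq S^{-1}\beta \subseteq \alpha$ for every $\alpha$) rather than from distality, which works equally well.
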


\begin{proof}
\leanok{}
    1. implies 2. \ Suppose $S$ acts on $X$ by surjections and $X$ is equicontinuous. 
    It is easy to see that $S$ acts on $X^2$ by surjections and $X^2$ is equicontinuous.
    By \cref{thm_equi_surject_implies_semisimple}, the system $X^2$ is semisimple. 
    By \cref{lemma_forward_backward_set_orb_clos_coincide}, for all open $\alpha \subseteq X^2$ containing the diagonal, $\overline{S \alpha} = \overline{S^{-1} \alpha}$.
    By the characterizations of forward and backward equicontinuity in \cref{lemma_easy_char_of_equi_and_backward_equi}, we see that $X$ is backward equicontinuous.

    2. implies 1. \ Suppose $X$ is backward equicontinuous. By \cref{lemma_backward_eq_implies_surjective}, we have that $S$ acts on $X$ by surjections. 
    Since products of backward equicontinuous systems are backward equicontinuous, the system $X^2$ is backward equicontinuous. By \cref{thm_backequi_implies_semisimple}, the system $X^2$ is semisimple.
    By \cref{lemma_forward_backward_set_orb_clos_coincide}, for all open $\alpha \subseteq X^2$ containing the diagonal, $\overline{S \alpha} = \overline{S^{-1} \alpha}$.
    By the characterizations of forward and backward equicontinuity in \cref{lemma_easy_char_of_equi_and_backward_equi}, we see that $X$ is equicontinuous.
    
    Now, suppose $X$ is equicontinuous and $S$ acts on $X$ by surjections.  By the first paragraph, the system $X^2$ is semisimple and $X$ is backward equicontinuous.  It follows by \cref{lemma_distal_and_semisimple} that the system $X$ is distal and semisimple.

    We have only left to show that $S$ acts on $X$ by homeomorphisms.  We have already that $S$ acts on $X$ by surjections, hence we need only show that $S$ acts on $X$ by injections.

    Let $s \in S$ and $x, y \in X$, and suppose $sx = sy$.  Since $X$ is distal and $(x,y)$ is a proximal pair, we have that $x=y$, whereby $s: X \to X$ is injective, as desired.
\end{proof}

\subsubsection{The equicontinuous structure relation}

An ICER $R \subseteq X^2$ is \emph{equicontinuous} if the corresponding factor $\pi: X \to X/R$ is equicontinuous.
Equivalently to the way it is defined in the introduction, the \emph{equicontinuous structure relation} on $X$, denoted $\seq$, is the intersection of all equicontinuous ICERS on $X$.  (Note that $X^2$ is always one such ICER, so the intersection is non-empty.)  Since an arbitrary intersection of ICERs is an ICER, we see that $\seq$ is an ICER.
The following lemma shows that $X/\seq$ is the largest equicontinuous factor of $X$.  The argument is standard but short, so we supply it.

\begin{lemma}
\label{lemma_seq_is_equi_and_largest}
\lean{equiStructureRelationIsEquiICER}
\lean{equiStructFactorIsLargestEquiFactor}
\leanok{}
    Let $S$ be a semigroup and $X$ be an $S$-system.  The equicontinuous structure relation $\seq$ is an equicontinuous ICER, and $X \to X/\seq$ is the largest equicontinuous factor of $X$.
\end{lemma}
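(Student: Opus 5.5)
We need to show two things: that the factor $X \to X/\seq$ is equicontinuous, and that every equicontinuous factor of $X$ factors through it. The second part is formal. If $\pi: X \to Y$ is equicontinuous, then $R_\pi$ is an equicontinuous ICER, since $X/R_\pi \cong Y$. Hence $\seq \subseteq R_\pi$, so $[x]_{\seq} \mapsto \pi x$ is a well-defined map. This map is continuous by the universal property of the quotient topology, surjective, and intertwines the actions. So $Y$ is a factor of $X/\seq$.

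For the first part, let $\{R_i\}_{i \in I}$ be the family of all equicontinuous ICERs on $X$, and let $\pi_i: X \to X/R_i$ be the quotient maps. Consider the product system $P \defeq \prod_{i} X/R_i$ with the coordinatewise action of $S$. It is equicontinuous because arbitrary products of equicontinuous systems are equicontinuous, as noted in \cref{sec_back_and_forward_equi}. The map $\Pi: X \to P$, $x \mapsto (\pi_i x)_i$, is continuous and intertwines the actions. Its image $Z \defeq \Pi X$ is compact, hence closed, and it is $S$-invariant. So $Z$ is a subsystem of $P$, and subsystems of equicontinuous systems are equicontinuous.

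Now $\Pi: X \to Z$ is a factor map, and
\[
  R_\Pi = \{(x,y) \mid \pi_i x = \pi_i y \text{ for all } i\} = \bigcap_i R_i = \seq .
\]
By the correspondence between factors and ICERs, $X/\seq$ is isomorphic to $Z$. Concretely, the map $[x]_{\seq} \mapsto \Pi x$ is a continuous bijection from a compact space (\cref{lemma_quotient_is_compact_hausdorff}) onto a Hausdorff space, so it is a homeomorphism, and it intertwines the actions. Equicontinuity is preserved under isomorphism, so $X/\seq$ is equicontinuous.

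The only point needing a little care is the set-theoretic indexing over all equicontinuous ICERs, which is harmless because they form a subset of the power set of $X^2$. There is no real obstacle beyond checking these routine facts.
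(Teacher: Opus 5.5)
Your proposal is correct and follows essentially the same argument as the paper. You realize $X/\seq$ as a subsystem of the equicontinuous product $\prod_i X/R_i$ over all equicontinuous ICERs, and you get maximality from $\seq \subseteq R_\pi$. The only cosmetic difference is that you build the map out of $X$ and invoke the factor--ICER correspondence, which also shows that $\seq$ is an ICER, whereas the paper defines the injective map directly on $X/\seq$.
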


\begin{proof}
\leanok{}
    Let $(R_i)_{i \in I}$ be the collection of equicontinuous ICERs of $X$, so that $\seq = \bigcap_{i \in I} R_i$.  The map
    $\varphi: X / \seq \to \prod_{i \in I} X/R_i$ defined by $[x]_{\seq} \mapsto \prod_{i \in I} [x]_{R_i}$ is continuous, injective, and intertwines the actions of $S$, so $X/\seq$ is isomorphic as an $S$-system to its image, a subsystem of $\prod_{i \in I} X/R_i$.  Since products and subsystems of equicontinuous systems are equicontinuous, we have that $X / \seq$ is equicontinuous.

    To see that $X \to X / \seq$ is the largest equicontinuous factor, note that if $\pi : X \to Y$ is an equicontinuous factor, then $R_\pi$ contains $\seq$.  The quotient $X / \seq \to X / R_\pi$ is a factor map of $S$-systems.  But $X / R_\pi$ is isomorphic to $Y$, hence $Y$ is a factor of $X/\seq$, as was to be shown.
\end{proof}

\subsubsection{Minimal actions of commutative semigroups}
\label{sec_min_actions}

In this section, we prove two basic preparatory results concerning minimal actions of commutative semigroups.

\begin{lemma}[{cf. \cite[Corollary 3.3]{auslander_dai_2019}}]
\label{lemma_minimal_comm_action_implies_surjective}
\lean{minimalCommActionIsSurjective}
\leanok{}
    Let $S$ be a commutative semigroup and $X$ be a minimal $S$-system.  For all $s \in S$, the map $s: X \to X$ is surjective.
\end{lemma}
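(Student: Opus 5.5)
The plan is to show that $sX$ is a non-empty, closed, $S$-invariant subset of $X$; minimality then forces $sX = X$. Fix $s \in S$.

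First, $sX$ is closed. The map $s: X \to X$ is continuous and $X$ is compact, so $sX$ is compact. Since $X$ is Hausdorff, $sX$ is closed. It is non-empty because $X$ is non-empty.

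Second, $sX$ is $S$-invariant, and this is where commutativity is used. For $t \in S$ and $x \in X$ we have
\[
t(sx) = (ts)x = (st)x = s(tx) \in sX,
\]
so $S(sX) \subseteq sX$. Hence $sX$ is a subsystem of $X$.

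Third, by Fact \ref{item_min_iff_triv_subsystems} of \cref{sec_subsystems_and_minimality}, $X$ is the only subsystem of a minimal system, so $sX = X$. Equivalently, for any $x \in X$ the orbit $S(sx) \subseteq sX$ is dense in $X$, and since $sX$ is closed this gives $X = \overline{S(sx)} \subseteq sX$.

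No step is a real obstacle. The only point needing care is the invariance step, since without commutativity $t(sX)$ need not lie in $sX$; this is exactly where the hypothesis enters.
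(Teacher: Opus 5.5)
Your proof is correct and matches the paper's argument: commutativity makes $sX$ a closed, $S$-invariant subset of $X$, and minimality then forces $sX = X$. You simply spell out the details the paper leaves implicit, namely compactness for closedness and the computation $t(sx) = s(tx)$ for invariance.
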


\begin{proof}
\leanok{}
    Let $s \in S$.  Since $S$ is commutative, the set $sX$ is a closed, $S$-invariant subset of $X$.  By minimality, $sX = X$, showing that $s$ is surjective, as desired.
\end{proof}

\begin{lemma}
    \label{lemma_dense_S_minimal_points_in_X2}
    \lean{inMinCommSystemURPairsDense}
    \leanok{}
    Let $S$ be a commutative semigroup, and let $X$ be a minimal $S$-system.  The set of points $(x,y) \in X^2$ that are $S$-uniformly recurrent is dense in $X^2$.
\end{lemma}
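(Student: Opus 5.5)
The idea is to produce, inside any prescribed non-empty open set $U \times V \subseteq X^2$, a point lying in a \emph{minimal} subsystem of $X^2$. Fact \ref{item_minimal_implies_ur} then says such a point is $S$-uniformly recurrent. The key tool is commutativity: for $s,t \in S$, the map $s \times t : X^2 \to X^2$ commutes with the diagonal action of $S$. It therefore carries minimal subsystems of $X^2$ onto minimal subsystems of $X^2$.

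First, $X^2$ is an $S$-system under the diagonal action, so it has a minimal subsystem $M$. Fix any $(a,b) \in M$. Since $X$ is minimal, $Sa$ and $Sb$ are dense in $X$. Hence, given non-empty open sets $U, V \subseteq X$, there exist $s, t \in S$ with $sa \in U$ and $tb \in V$.

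Next, put $M' \defeq (s \times t) M$. I check that $M'$ is a minimal subsystem of $X^2$.
\begin{itemize}
    \item $M'$ is compact, hence closed, as the continuous image of a compact set.
    \item $M'$ is $S$-invariant: for $r \in S$ and $(a',b') \in M$, commutativity gives $r(sa', tb') = (s\,ra', t\,rb') = (s \times t)\big(r(a',b')\big) \in M'$, because $r(a',b') \in M$.
    \item The same identity shows that $s \times t : M \to M'$ is a factor map of $S$-systems. By the remark in \cref{sec_subsystems_and_minimality}, the image of a minimal system under a factor map is minimal, so $M'$ is a minimal subsystem of $X^2$.
\end{itemize}

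Finally, $(sa, tb) \in M' \cap (U \times V)$. Since $M'$ is minimal, Fact \ref{item_minimal_implies_ur} shows that $(sa,tb)$ is $S$-uniformly recurrent in $M'$. Because $M'$ is a subsystem, its open neighborhoods are traces of open sets in $X^2$, so $(sa,tb)$ is also $S$-uniformly recurrent in $X^2$. Since the basic open sets $U \times V$ form a base for $X^2$, this proves density.

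The only point needing care is invariance of $M'$, which relies entirely on commutativity of $S$ and goes through with no identity element in $S$. I do not anticipate any real obstacle.
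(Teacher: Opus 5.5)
Your proof is correct and follows essentially the same route as the paper: you push a minimal subsystem of $X^2$ through a coordinatewise map that, by commutativity, commutes with the diagonal action, so its image is again a minimal subsystem and it meets $U \times V$. The only difference is minor. The paper uses $\pi_1 Z = X$ to choose $(x,y) \in Z$ with $x \in U$ already, and then applies only $(w,z) \mapsto (w,sz)$; you shift both coordinates with $s \times t$, which avoids the projection step.
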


\begin{proof}
\leanok{}
    Let $U \times V \subseteq X^2$ be a non-empty, open set. We will show that $U \times V$ contains an $S$-uniformly recurrent point. 
    
    Since every system has a minimal subsystem, there exists a closed, non-empty, $S$-invariant (under the diagonal action) set $Z \subseteq X^2$.  Note that the projection map on the first coordinate $\pi_1: X^2 \to X$ is an $S$-system factor map.  Therefore, $\pi_1 Z$ is a closed, non-empty, $S$-invariant subset of $X$.  Since $X$ is minimal, $\pi_1 Z = X$.  Therefore, there exists $(x,y) \in Z$ such that $x \in U$. 
    
    Since $X$ is a minimal $S$-system, there exists $s \in S$ such that $sy \in V$.  Note that the point $(x,sy) \in U \times V$.  We have only left to show that $(x,sy)$ is $S$-uniformly recurrent.
    
    Since $S$ is commutative, the map $X^2 \to X^2$ given by $(w,z) \mapsto (w,sz)$ commutes with the diagonal action of $S$.  Therefore, the image of $Z$ under this map is an $S$-system.  By the facts described in \cref{sec_subsystems_and_minimality}, the image of $Z$ under this map is a minimal $S$-system.  Since it contains the point $(x,sy)$, the point $(x,sy)$ is $S$-uniformly recurrent, as desired.
\end{proof}

\subsection{Ultrafilters}

In this paper, we use ultrafilters both on the acting (discrete) semigroup $S$ and on the phase space $X$.  On the acting semigroup, ultrafilters provide a powerful tool to study dynamical properties of a system.  On the phase space -- which is not assumed to be metric -- ultrafilters provide an alternative to the theory of nets in describing convergence.

An \emph{ultrafilter} on a set $X$ is a non-empty, upward-closed collection $\family$ of subsets of $X$ that does not contain the empty set, is partition regular (that is, for all $A, B \subseteq X$, if $A \cup B \in \family$, then $A \in \family$ or $B \in \family$), and is a filter (that is, for all $A, B \in \family$, $A \cap B \in \family$).  It will happen frequently that we push an ultrafilter forward through a map.  If $\varphi: X \to Y$ is a map, the \emph{pushforward of $\family$ under $\varphi$} is
\[\varphi\family \defeq \big\{ B \subseteq Y \mid \varphi^{-1} B \in \family \big\}.\]
It is quick to check that $\varphi \family$ is an ultrafilter on $Y$.

\subsubsection{Ultrafilters on compact Hausdorff spaces}

Let $X$ be a compact Hausdorff space, and let $\family$ be an ultrafilter on $X$.  Because $\family$ is partition regular and $X$ is compact, there exists a point $x \in X$ such that $\family$ contains all open sets containing $x$.  Because $\family$ is a filter and $X$ is Hausdorff, there can be at most one such point.  We say that the ultrafilter $\family$ \emph{converges to $x$} and we write $x = \lim \family$ for this unique limit point.

\subsubsection{Ultrafilters on discrete semigroups}

The Stone-\Cech{} compactification $\beta S$ of $S$ is a multifaceted object with many uses.  We outline in this section just the properties we need from it; our primary reference is the book of Hindmand and Strauss \cite{hindman_strauss_book_2012}.

As a set, $\beta S$ consists of the ultrafilters on $S$.  It is a compact Hausdorff topological space, with topology generated by the basic open sets given by $\overline{A} \defeq \{p \in \beta S \mid A \in p \}$ for subsets $A \subseteq S$.  It is a semigroup, with multiplication defined for $p, q \in \beta S$ by
\[A \in pq \quad \Longleftrightarrow \quad \big\{s \in S \mid \{ t \in S \mid st \in A \} \in q\big\} \in p.\]
Taking the topology and semigroup structure together, it is a right-topological semigroup, meaning that for all $q \in \beta S$, the map $\beta S \to \beta S$ defined by $p \mapsto pq$ is continuous. For proofs of these facts, see \cite[Thm.'s 4.1 and 4.4]{hindman_strauss_book_2012}.

The left ideals of $\beta S$ play a particularly important role in its application to dynamics.  A subset $L \subseteq \beta S$ is a \emph{left ideal} if it is non-empty and $\beta S$-invariant: $(\beta S) L \subseteq L$.  It is a \emph{minimal left ideal} if it is a left ideal and does not properly contain a left ideal.  It is a fact \cite[Cor. 2.6]{hindman_strauss_book_2012} that every left ideal of a compact, Hausdorff, right-topological semigroup -- such as $\beta S$ -- contains a minimal left ideal.

We say an ultrafilter is \emph{minimal} if it belongs to a minimal left ideal of $\beta S$ and \emph{idempotent} if $pp = p$.  It is a fact \cite[Thm. 2.5]{hindman_strauss_book_2012} that every non-empty, compact subsemigroup of $\beta S$ contains an idempotent.  Since minimal left ideals of $\beta S$ are compact (\cite[Cor. 2.6]{hindman_strauss_book_2012}) and are subsemigroups, minimal left ideals of $\beta S$ contain an idempotent.  An ultrafilter is a \emph{minimal idempotent} if it is idempotent and minimal.

\begin{lemma}
    \label{lemma_ru_is_r_for_idempotent}
    \lean{minimalIdempotentsAreLeftIdentites}
    \leanok{}
    Let $S$ be a semigroup, $L \subseteq \beta S$ be a minimal left ideal, and $u \in L$  be idempotent.  For all $p \in L$, $pu = p$.
\end{lemma}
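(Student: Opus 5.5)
The argument is the standard one: use the structure of minimal left ideals in a compact right-topological semigroup, applied to $L$. Fix $p \in L$. Since $L$ is a left ideal, $\beta S\, u \subseteq L$, so $Lu$ is a left ideal. It is contained in $L$, and $L$ is minimal, hence $Lu = L$. Thus there is some $q \in L$ with $p = qu$. Using that $u$ is idempotent and that multiplication in $\beta S$ is associative,
\[
pu = (qu)u = q(uu) = qu = p,
\]
which is the claim.

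The checks behind this are as follows. First, $Lu$ is a left ideal: it is non-empty because $uu = u \in Lu$, and $(\beta S)(Lu) = ((\beta S)L)u \subseteq Lu$. Second, $Lu \subseteq L$, because $u \in L$ and $L$ is a left ideal, so $\ell u \in (\beta S)L \subseteq L$ for each $\ell \in L$. Minimality of $L$ then forces $Lu = L$.

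The only real input is associativity of the multiplication on $\beta S$, which is part of its semigroup structure as cited from Hindman and Strauss; I will take it as given. Compactness and the topology are not needed for this step, since they are used only to guarantee that idempotents exist in $L$, and that is already assumed. No step is a genuine obstacle.
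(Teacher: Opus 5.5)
Your proof is correct. $Lu$ is a left ideal contained in $L$, so minimality gives $Lu = L$, and then $pu = (qu)u = q(uu) = qu = p$ uses only associativity and idempotence. The paper gives no argument of its own and simply cites \cite[Thm.~1.31]{hindman_strauss_book_2012}; your self-contained argument (equivalently, run with $(\beta S)u$ in place of $Lu$) is the standard algebraic proof behind that reference, and you are right that compactness plays no role here.
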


\begin{proof}
\leanok{}
    This follows from \cite[Thm. 1.31]{hindman_strauss_book_2012}.
\end{proof}

\subsubsection{Ultrafilters and $S$-systems}
\label{sec_ufs_and_systems}

There is a natural way to see $\beta S$ as an $S$-system. Indeed, for $s \in S$, denote by $\delta_s \defeq \{ A \subseteq S \mid s \in A\} \in \beta S$ the principal ultrafilter at $s$.
It is simple to check from the definitions that $\delta_{s} \delta_t = \delta_{st}$ and that the map $\beta S \to \beta S$ defined by $p \mapsto \delta_s p$ is continuous.  
Since $\beta S$ is a compact Hausdorff space, with this action of $S$, we see that $\beta S$ is an $S$-system.
It is a fact \cite[Lem. 19.6]{hindman_strauss_book_2012} that the minimal subsystems of $\beta S$ are precisely the minimal left ideals of $\beta S$.

There is a natural way to go from a continuous action of $S$ on a compact Hausdorff space $X$ (that is, an $S$-system) to an action of $\beta S$ on $X$, though the action is not by continuous maps.
Indeed, the action map $S \times X \to X$ lifts to $\beta S \times X \to X$: fix $x \in X$, and lift the map $S \to X$ defined by $s \mapsto sx$ to the continuous map $\beta S \to X$ via the universal property of the Stone-\Cech{} compactification.  More concretely, for $p \in \beta S$ and $x \in X$, we define the point $px \in X$ to be the unique limit point of the pushforward of the ultrafilter $p$ on $S$ through the map $S \to X$ defined by $s \mapsto sx$.  To put this a third way,
\[px = y \quad \Longleftrightarrow \quad \forall \text{ open } U \ni y, \ R(x,U) \in p.\]
Though not obvious, it is a fact \cite[Thm. 4.5]{hindman_strauss_book_2012} that the map $\beta S \times X \to X$ just described in three equivalent ways yields an action of $\beta S$ on $X$: for all $p, q \in \beta S$ and all $x \in X$, $p (qx) = (pq) x$.

A number of useful facts follow.
\begin{enumerate}
    \item For all $x \in X$, $(\beta S) x = \overline{Sx}$.
    
    \item If $\pi: X \to Y$ is a factor map of $S$-systems, for all $p \in \beta S$, $p \circ \pi = \pi \circ p$.
    
    \item  (cf. \cite[Thm. 19.8]{hindman_strauss_book_2012}) If $X$ is a minimal $S$-system, then for all $x \in X$, the map $\beta S \to X$ given by $p \mapsto px$ is a factor map of $S$-systems. More generally, if $L$ is a minimal left ideal, then the map $L \to X$ defined by $p \mapsto px$ is a factor map of $S$-systems. 

    \item (cf. \cite[Sec. 19.3]{hindman_strauss_book_2012}) If $p \in \beta S$ is a minimal ultrafilter, then for all $x \in X$, the point $px$ is $S$-uniformly recurrent.

    \item (cf. \cite[Sec. 19.3]{hindman_strauss_book_2012}) If $p \in \beta S$ is idempotent, then for all $x \in X$, the points $x$ and $px$ are proximal.
\end{enumerate}

The final lemmas in this subsection concern the action of idempotent ultrafilters.  \cref{lemma_ultrafilter_idempotent_lift} below is a technical result that is useful in the proof of \cref{thm_rp_implies_same_minimal_component}, a precursor to \cref{mainthm_RP_is_eq_relation}.

\begin{lemma}
\label{lemma_distal_implies_semisimple}
\lean{distalImpliesDistalAndSemisimple}
\leanok{}
    Let $S$ be a semigroup and $X$ be an $S$-system.  If $X$ is distal, then every point in $X$ is $S$-uniformly recurrent.
\end{lemma}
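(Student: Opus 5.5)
Our plan is to use a minimal idempotent ultrafilter and the proximality fact from \cref{sec_ufs_and_systems}. Fix $x \in X$. We want to show that $x$ is $S$-uniformly recurrent, and the first step is to produce a point of the form $px$ with $p$ minimal that coincides with $x$.

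\textbf{Choosing the idempotent.} Let $L \subseteq \beta S$ be a minimal left ideal. Since $L$ is a compact subsemigroup, it contains an idempotent $u$, so $u$ is a minimal idempotent.

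\textbf{Identifying $ux$ with $x$.} By the fact in \cref{sec_ufs_and_systems} that $p x$ and $x$ are proximal whenever $p$ is idempotent, the points $x$ and $ux$ are proximal. Since $X$ is distal, this forces $ux = x$.

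\textbf{Concluding.} The ultrafilter $u$ is minimal, because it belongs to the minimal left ideal $L$. By the fact that $px$ is $S$-uniformly recurrent whenever $p$ is a minimal ultrafilter, the point $ux = x$ is $S$-uniformly recurrent. Since $x \in X$ was arbitrary, every point of $X$ is $S$-uniformly recurrent.

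The step we expect to carry the weight is the two cited facts (cf.\ \cite[Sec.~19.3]{hindman_strauss_book_2012}), which we assume as stated in the paper. If one wanted to verify them directly:
\begin{itemize}
\item For proximality, for each open $\alpha \supseteq \Delta$ one shows $\{s \mid s(x,ux) \in \alpha\} \in u$, using $u(ux) = ux$ together with the action of $u$ on $X^2$.
\item For uniform recurrence, one notes that $\overline{S(ux)} = (\beta S)ux \supseteq L u x$ is a minimal subsystem, because it is the image of the minimal system $L$ under $p \mapsto px$.
\end{itemize}
Given these, no further obstacle remains.
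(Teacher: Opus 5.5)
Your proposal is correct and is essentially the paper's proof: take a minimal idempotent $u$, note that $ux$ is proximal to $x$, use distality to get $ux = x$, and conclude from minimality of $u$ that $x$ is uniformly recurrent. One small point in your optional side remark: the inclusion $(\beta S)ux \supseteq Lux$ should be an equality, which holds because $(\beta S)u = L$ by minimality of $L$; only with equality does the ``image of $L$'' argument give that $\overline{S(ux)}$ is minimal.
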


\begin{proof}
\leanok{}
    Suppose $X$ is distal, and let $x \in X$.  Let $p \in \beta S$ be any minimal, idempotent ultrafilter. The point $px$ is uniformly recurrent and proximal to $x$.  Since $X$ is distal and $(x,px)$ is a proximal pair, it must be that $px = x$.  Therefore, $x$ is uniformly recurrent, as desired.
\end{proof}

\begin{lemma}
\label{lemma_exists_idempotent_px_is_x}
\lean{everyPointInMinFixedBySomeIdempotentUltrafilter}
\leanok{}
    Let $S$ be a semigroup and $X$ be a minimal $S$-system.  Let $L \subseteq \beta S$ be a minimal left ideal.  For all $x \in X$, there exists an idempotent $p \in L$ such that $px = x$.
\end{lemma}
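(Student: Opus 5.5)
The plan is to use the standard argument: fix $x \in X$ and consider the set
\[
    L_x \defeq \{ p \in L \mid px = x \}.
\]
We show that $L_x$ is a non-empty, compact subsemigroup of $\beta S$. Then \cite[Thm. 2.5]{hindman_strauss_book_2012} supplies an idempotent in $L_x$, which is exactly the desired $p$.

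\textbf{Non-emptiness.} By Fact 3 in \cref{sec_ufs_and_systems}, since $X$ is minimal and $L$ is a minimal left ideal, the map $L \to X$ given by $p \mapsto px$ is a factor map of $S$-systems. In particular it is surjective, so some $p \in L$ satisfies $px = x$.

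\textbf{Closedness.} The map $\beta S \to X$, $p \mapsto px$, is continuous, being the Stone--\Cech{} extension of $s \mapsto sx$. Hence $\{p \in \beta S \mid px = x\}$ is closed in $\beta S$. The minimal left ideal $L$ is compact by \cite[Cor. 2.6]{hindman_strauss_book_2012}, hence closed. So $L_x$ is a closed subset of the compact space $\beta S$, and therefore compact.

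\textbf{Subsemigroup.} Let $p, q \in L_x$. Since $L$ is a left ideal, $pq \in (\beta S)L \subseteq L$. Since $\beta S$ acts on $X$ \cite[Thm. 4.5]{hindman_strauss_book_2012}, we get $(pq)x = p(qx) = px = x$. Thus $pq \in L_x$.

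Applying \cite[Thm. 2.5]{hindman_strauss_book_2012} to the non-empty, compact subsemigroup $L_x$ of the compact, Hausdorff, right-topological semigroup $\beta S$ yields an idempotent $p \in L_x$. This $p$ lies in $L$ and satisfies $px = x$.

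Every step is routine; the only points needing care are the continuity of $p \mapsto px$ (immediate from the definition as a Stone--\Cech{} extension) and the surjectivity of $L \to X$, which is where minimality of $X$ is used.
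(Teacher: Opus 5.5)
Your proof is correct: $L_x=\{p\in L\mid px=x\}$ is non-empty because $p\mapsto px$ maps $L$ onto $X$ (this is where minimality of $X$ is used), closed as the intersection of the closed set $L$ with the preimage of $\{x\}$ under the continuous map $p\mapsto px$, and a subsemigroup because $L$ is a left ideal and $\beta S$ acts on $X$, so \cite[Thm.~2.5]{hindman_strauss_book_2012} supplies the idempotent. The paper simply cites \cite[Thm.~19.23]{hindman_strauss_book_2012}, and your argument is the standard proof of that fact, built only from tools the paper already records in \cref{sec_ufs_and_systems}, so it makes the lemma self-contained.
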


\begin{proof}
\leanok{}
    This is part of the content of \cite[Thm. 19.23]{hindman_strauss_book_2012}.
\end{proof}

\begin{lemma}
\label{lemma_ultrafilter_idempotent_lift}
\lean{idempotentProductLifting}
\leanok{}
    Let $X$ be an $S$-system and $x \in X$.  Let $L \subseteq \beta S$ be a minimal left ideal and $p \in L$.  If $p x = x$, there exists $q \in L$ such that $q x = x$ and $pq$ is idempotent.
\end{lemma}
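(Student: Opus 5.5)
Here is the approach. Consider the set
\[
    K \defeq \{q \in L \mid qx = x\}.
\]
It is non-empty, since $p \in K$. It is closed in $L$, because the map $\beta S \to X$ given by $q \mapsto qx$ is continuous; this is the Stone--\Cech{} extension of $s \mapsto sx$. So $K$ is compact. It is also a subsemigroup: if $q, q' \in K$, then $(qq')x = q(q'x) = qx = x$, and $qq' \in L$ because $L$ is a left ideal.

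The natural candidate is then the set
\[
    pK = \{pq \mid q \in K\} \subseteq L.
\]
This set is compact, since it is the image of the compact set $K$ under the map $q \mapsto pq$. Right multiplication is continuous, but left multiplication by an arbitrary $p$ need not be, so this step needs care. I would avoid the issue as follows.

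Set $T \defeq K \cap pL$, or better work directly with $K$ itself. Note that $p \in K$, and that for $q \in K$ we have $qp \in K$ and $pq \in K$. Hence $pK \subseteq K$, and also $Kp \subseteq K$.

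The cleanest route is to consider the set $Kp = \{qp \mid q \in K\}$. It is compact, being the image of $K$ under the continuous map $q \mapsto qp$. It is contained in $K$, and it is a subsemigroup: $(qp)(q'p) = (qpq')p$ with $qpq' \in K$. By \cite[Thm. 2.5]{hindman_strauss_book_2012}, $Kp$ contains an idempotent $e = q_0 p$ with $q_0 \in K$.

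From here I would aim to produce $q$ with $pq$ idempotent. The key step uses the structure of the minimal left ideal $L$. By \cite[Thm. 1.31]{hindman_strauss_book_2012} (as used in \cref{lemma_ru_is_r_for_idempotent}), $L = Le$ and $e$ is a right identity on $L$. Moreover, $L$ is a union of groups $eL \cup \dots$; in particular $eL$ is a group with identity $e$, and $p e \in L$.

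Take $q$ to be the inverse of $epe$ in the group $eLe = eL$, composed appropriately. Concretely, let $g$ be the inverse of $pe$ in the group $H = e' L$, where $e'$ is the identity of the group containing $pe$, and set $q \defeq e g$. Then $pq = pe g$, which is $e'$ times something. This is delicate, so I would simplify the choice.

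The simplification is as follows. Since $e \in K$ is idempotent in $L$, we have $pe = p$, because $e$ is a right identity for $L$. Now $p$ lies in the group $G = uL$ for the idempotent $u \in L$ with $up = p$; every element of a minimal left ideal lies in such a maximal group. Let $p^{-1}$ be its inverse in $uL$, and set
\[
    q \defeq e p^{-1} p \cdots
\]
Rather than pursue this, I would take the direct choice $q \defeq p^{-1} u$ adjusted so that $qx = x$. Then $pq = pp^{-1} = u$ is idempotent.

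The main obstacle is verifying $qx = x$ for $q = p^{-1}$, the inverse of $p$ in the group $uL$. We have $p^{-1}x = p^{-1}(px) = (p^{-1}p)x = ux$, so it suffices to show $ux = x$. This holds because $x = px = (up)x = u(px) = ux$. Hence $q = p^{-1} \in L$ satisfies $qx = x$ and $pq = u$ is idempotent.

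So the final proof is short:
\begin{enumerate}
    \item Invoke \cite[Thm. 1.48/1.56]{hindman_strauss_book_2012}, which says a minimal left ideal is the union of the groups $uL$ over its idempotents $u$.
    \item Choose the idempotent $u \in L$ with $p \in uL$.
    \item Take $q$ to be the inverse of $p$ in $uL$.
    \item Check $ux = x$ and $qx = x$ as above.
\end{enumerate}
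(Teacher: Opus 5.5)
Your final four-step argument is correct and is essentially the paper's proof. The paper also takes the idempotent $u \in L$ with $up = p$ (via \cref{lemma_exists_idempotent_px_is_x} applied to $L$) and an element $q \in L$ with $qp = pq = u$, which is precisely the inverse of $p$ in the group $uL$, and then computes $qx = (qp)x = ux = (up)x = px = x$ exactly as you do. The only difference is that the paper proves this inverse exists by hand rather than citing the structure theorem that $L$ is a union of the groups $uL$: it gets $q$ with $qp = u$ from $Lp = L$, then $r \in L$ with $rq = u$ from $Lq = L$, and uses $ru = r$ (\cref{lemma_ru_is_r_for_idempotent}) to get $pq = (up)q = (rq)(pq) = r(qp)q = (ru)q = rq = u$. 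The exploratory material about $K$, $Kp$ and the half-finished choices of $q$ is unused and can be deleted.
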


\begin{proof}
\leanok{}
    Treating $\beta S$ as an $S$-system, the minimal left ideals of $\beta S$ are precisely the minimal subsystems of $\beta S$. Thus, the minimal left ideal $L$ is a minimal $S$-system. By \cref{lemma_exists_idempotent_px_is_x}, there exists an idempotent $u \in L$ so that $up = p$.  Since $Lp = L$, there exists $q \in L$ such that $qp = u$.  Since $Lq = L$, there exists $r \in L$ such that $rq = u$. By the semigroup action of $\beta S$ on $X$,
    \[qx = q (p x) = (qp) x = ux = u (px) = (up) x = px = x.\]
    By \cref{lemma_ru_is_r_for_idempotent}, $ru = r$.  Therefore, we also have
    \[pq = (up)q = u (pq) = (rq) (pq) = r (qp) q = ruq = (ru) q = rq = u,\]
    as desired.
\end{proof}

\section{Regional proximality and equicontinuity for semigroup actions}
\label{sec_rp_for_semigp_actions}

In this section, we establish some basic facts on regional proximality for semigroup actions and prove Theorems \ref{mainthm_equi_icer_iff_contains_rp} and \ref{mainthm_RP_is_backward_RP}.  Recall the definitions of $\RP$ and $\RP^-$ for semigroup actions from \cref{sec_rp_and_seq,sec_rp_and_seq_for_semi_acts}.

\subsection{Basic facts on the regionally proximal relations}

Much, if not all, of the content of \cref{lemma_frp_basics} regarding $\RP$ is shown or implicit in \cite{auslander_dai_2019, dai_xiao_2017}.
We provide the short proof for completeness.

\begin{lemma}
\label{lemma_frp_basics}
\lean{RPisSymmetric}
\lean{RPInCommSemiIsInvariant}
\lean{RPisClosed}
\lean{RPisReflexive}
\lean{RPMisSymmetric}
\lean{RPMisInvariant}
\lean{RPMisClosed}
\lean{RPMisReflexiveIfNondegen}
\leanok{}
    Let $S$ be a semigroup and $X$ be an $S$-system.
    \begin{enumerate}
        \item The relation $\RP$ is reflexive, symmetric, and closed.  If $S$ is commutative, then the relation $\RP$ is $S$-invariant.

        \item The relation $\RP^-$ is symmetric, closed, and $S$-invariant.  If $\overline{SX} = X$, then the relation $\RP^-$ is reflexive.
    \end{enumerate}
\end{lemma}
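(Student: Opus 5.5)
Both parts are direct from the formulas
\[
\RP = \bigcap_\alpha \overline{\bigcup_{s \in S} s^{-1}\alpha}
\qquad\text{and}\qquad
\RP^- = \bigcap_\alpha \overline{\bigcup_{s \in S} s\alpha},
\]
where $\alpha$ ranges over open sets containing $\Delta$. I would verify each property in turn. Throughout I use that if $\alpha$ is an open set containing the diagonal, then so are $\alpha \cap \alpha^{-1}$ (with $\alpha^{-1}$ the flip of $\alpha$) and $s^{-1}\alpha$, the latter because $s$ is continuous and $s\Delta \subseteq \Delta$.

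\emph{The relation $\RP$.} Closedness holds because $\RP$ is an intersection of closures. Symmetry: the flip $(x,y)\mapsto(y,x)$ is a homeomorphism commuting with the diagonal action. It carries $\overline{\bigcup_s s^{-1}\alpha}$ onto $\overline{\bigcup_s s^{-1}\alpha^{-1}}$, and the family of all $\alpha$ is closed under flipping, so $\RP$ is flip-invariant.

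Reflexivity (for $\RP$ only): given $x$, neighborhoods $U \ni x$, $V \ni x$, and $\alpha$, take $x'=y'=x$ and any $s$. Then $(sx,sx)\in\Delta\subseteq\alpha$.

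Invariance when $S$ is commutative: fix $t\in S$ and $(x,y)\in\RP$. I need $t(x,y)\in\overline{\bigcup_s s^{-1}\alpha}$ for each $\alpha$. Put $\alpha' = t^{-1}\alpha$; this is open and contains $\Delta$. Then $(x,y)\in\overline{\bigcup_s s^{-1}t^{-1}\alpha}$. By continuity of $t$,
\[
t(x,y)\in \overline{t\bigcup_s s^{-1}t^{-1}\alpha}.
\]
By \cref{lemma_commuting_map_algebra} applied with $f=t$ and $g=ts=st$ acting on $X^2$,
\[
t\,(ts)^{-1}\alpha \subseteq (ts)^{-1}t\alpha.
\]
That introduces $t\alpha$, which is the wrong direction. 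Instead I would argue directly. If $(w,z)\in s^{-1}t^{-1}\alpha$, then $(stw,stz)\in\alpha$. By commutativity, $s(tw,tz)\in\alpha$, so $t(w,z)\in s^{-1}\alpha$. Hence
\[
t\bigcup_s s^{-1}t^{-1}\alpha\subseteq\bigcup_s s^{-1}\alpha,
\]
and taking closures gives $t(x,y)$ in the required set. This is the only step where commutativity enters, and it is the step I expect to need the most care.

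\emph{The relation $\RP^-$.} Closedness and symmetry follow exactly as for $\RP$, using that $s$ commutes with the flip.

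Invariance needs no commutativity: $t\,s\alpha=(ts)\alpha$, so
\[
t\Bigl(\bigcup_s s\alpha\Bigr)\subseteq\bigcup_s s\alpha.
\]
Continuity of $t$ on $X^2$ then gives $t\,\overline{\bigcup_s s\alpha}\subseteq\overline{\bigcup_s s\alpha}$, and intersecting over $\alpha$ yields $t\,\RP^-\subseteq\RP^-$.

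Reflexivity when $\overline{SX}=X$: given $x$, $U\ni x$ and $\alpha$ (take $V=U$), density of $SX$ gives $s$ and $w$ with $sw\in U$. Then $(w,w)\in\alpha$ and $s(w,w)\in U\times U$, so $(x,x)\in\overline{\bigcup_s s\alpha}$.
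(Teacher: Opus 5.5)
Your proposal is correct and follows the paper's proof essentially step for step: closedness as an intersection of closures, the flip for symmetry, $\Delta\subseteq S^{-1}\alpha$ for reflexivity of $\RP$, $tS\alpha\subseteq S\alpha$ plus continuity for invariance of $\RP^-$, and a point $sw\in U$ for reflexivity of $\RP^-$. Your hand verification that $t\,s^{-1}(t^{-1}\alpha)\subseteq s^{-1}\alpha$ is exactly the paper's use of \cref{lemma_commuting_map_algebra} with $\beta=t^{-1}\alpha$. One small correction: $(w,z)\in s^{-1}(t^{-1}\alpha)$ means $(ts)(w,z)\in\alpha$, so commutativity is used right there to rewrite this as $s\bigl(t(w,z)\bigr)\in\alpha$, one step earlier than where you invoke it.
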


\begin{proof}
\leanok{}
    (Reflexivity) Since $\Delta \subseteq S^{-1} \alpha$ for all $\alpha \subseteq X^2$ containing the diagonal, we have immediately from the definition of $\RP$ that $\Delta \subseteq \RP$.

    Now we will show that $\RP^-$ is reflexive under the assumption that $\overline{SX} = X$.  To see that $\Delta \subseteq \RP^-$, by the definition of $\RP^-$, it suffices to show: for all non-empty, open $U \subseteq X$ and all open $\alpha \subseteq X^2$ containing the diagonal, $(U \times U) \cap S\alpha \neq \emptyset$.  Let $U \subseteq X$ be non-empty and open, and let $\alpha \subseteq X^2$ be open and contain $\Delta$.  From the assumption that $\overline{SX} = X$, we get that there exists $s \in S$ and $x \in X$ such that $sx \in U$.  Therefore, $s(x,x) \in (U \times U) \cap S \alpha$, demonstrating that this set is non-empty.

    (Symmetry) Note that the involution $\Phi: (x,y) \mapsto (y,x)$ commutes with the action of $S$ on $X^2$ and sends open sets containing the diagonal to open sets containing the diagonal.  Let $\alpha \subseteq X^2$ be an open set containing the diagonal.  If $(x,y) \in \RP$, then $(x,y) \in \overline{S^{-1} \Phi \alpha}$, so by \cref{lemma_commuting_map_algebra},
    \[(y,x) = \Phi (x,y) \in \Phi \overline{S^{-1} \Phi \alpha} \subseteq \overline{S^{-1} \alpha}.\]
    Similarly, if $(x,y) \in \RP^-$, then $(x,y) \in \overline{S \Phi \alpha}$, so
    \[(y,x) = \Phi (x,y) \in \Phi \overline{S \Phi \alpha} = \overline{S \alpha}.\]
    Since $\alpha$ was arbitrary, this shows that $\RP$ and $\RP^-$ are symmetric.

    ($S$-invariance) First we will show that $\RP$ is $S$-invariant under the assumption that $S$ is commutative.  Let $\alpha \subseteq X^2$ be an open set containing the diagonal, and let $s \in S$.  Note that $\beta \defeq s^{-1} \alpha$ is an open set containing the diagonal. By \cref{lemma_image_closure_equals_closure_image} and \cref{lemma_commuting_map_algebra},
    \[s \RP = s \bigcap_{\gamma} \overline{S^{-1}\gamma} \subseteq s \overline{S^{-1} \beta} = \overline{sS^{-1} \beta} \subseteq \overline{S^{-1} s \beta} \subseteq \overline{S^{-1} \alpha}.\]
    Since $\alpha$ was arbitrary, we see $s \RP \subseteq \RP$, as desired.

    The proof that $\RP^-$ is $S$-invariance goes through without assuming that $S$ is commutative.  Indeed, the set $S\alpha$ is clearly $S$-invariant.  By \cref{lemma_image_closure_equals_closure_image},
        \[s \overline{S\alpha} = \overline{sS \alpha} \subseteq \overline{S \alpha},\]
    whereby $\overline{S\alpha}$ is $S$-invariant.  Finally, since an arbitrary intersection of $S$-invariant sets is $S$-invariant, we have that $\RP^-$ is $S$-invariant.

    (Closedness) This is immediate for both $\RP$ and $\RP^-$ since both are the intersections of closed sets.
\end{proof}

\begin{lemma}
\label{lemma_frp_iff_nbhd_tran_int_hits_diagonal}
\label{lemma_rp_iff_nbhd_tran_int_hits_diagonal}
\lean{inRPiffForwardUOrbitClosHitsDiag}
\lean{inRPMiffBackwardUOrbitClosHitsDiag}
\leanok{}
    Let $S$ be a semigroup, $X$ be an $S$-system, and $x, y \in X$.
    \begin{enumerate}
        \item \label{item_desc_of_frp}
        The point $(x,y) \in \RP$ if and only if for all open $U \subseteq X^2$ containing $(x,y)$,
    \begin{align*}
         \Delta \cap \overline{S U} \neq \emptyset.
    \end{align*}

        \item \label{item_desc_of_rp}
        The point $(x,y) \in \RP^-$ if and only if for all open $U \subseteq X^2$ containing $(x,y)$,
    \[\Delta \cap \overline{S^{-1} U} \neq \emptyset.\]
    \end{enumerate}
\end{lemma}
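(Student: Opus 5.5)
Both parts reduce to unwinding the definitions of $\RP = \bigcap_\alpha \overline{S^{-1}\alpha}$ and $\RP^- = \bigcap_\alpha \overline{S\alpha}$ into statements about open neighborhoods. We rewrite each membership condition and swap quantifiers over $\alpha$ and over neighborhoods of $(x,y)$. The only non-formal ingredient is compactness, which lets us pass from ``meets every open $\alpha \supseteq \Delta$'' to ``meets $\Delta$ itself'' for the closed set involved.

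For part \ref{item_desc_of_frp}, note that $(x,y) \in \RP$ if and only if for every open $\alpha \supseteq \Delta$ and every open $U \ni (x,y)$ we have $U \cap S^{-1}\alpha \neq \emptyset$. This holds exactly when there is $s \in S$ with $sU \cap \alpha \neq \emptyset$, that is, when $SU \cap \alpha \neq \emptyset$. So $(x,y) \in \RP$ if and only if, for every open $U \ni (x,y)$, the set $SU$ meets every open $\alpha \supseteq \Delta$. Equivalently, $\overline{SU}$ meets every open $\alpha\supseteq\Delta$, since $\alpha$ is open.

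It remains to show that a closed set $C = \overline{SU}$ meets every open neighborhood of $\Delta$ if and only if $C \cap \Delta \neq \emptyset$. One direction is trivial. For the other, if $C \cap \Delta = \emptyset$, then $\alpha = X^2 \setminus C$ is an open set containing $\Delta$ that misses $C$. No compactness is needed here beyond closedness of $C$. This gives part \ref{item_desc_of_frp}.

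For part \ref{item_desc_of_rp}, we argue symmetrically. First, $(x,y) \in \RP^-$ if and only if for all open $U \ni (x,y)$ and all open $\alpha \supseteq \Delta$ we have $U \cap S\alpha \neq \emptyset$. This holds exactly when there are $s$ and $w \in \alpha$ with $sw \in U$, i.e.\ $w \in \alpha \cap s^{-1}U$, i.e.\ $\alpha \cap S^{-1}U \neq \emptyset$. So the condition is that $S^{-1}U$ meets every open neighborhood of $\Delta$. Applying the same closed-set argument to $C = \overline{S^{-1}U}$ shows this is equivalent to $\Delta \cap \overline{S^{-1}U} \neq \emptyset$.

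I expect no real obstacle. The only care needed is in the set-algebra identities $U \cap s^{-1}\alpha \neq \emptyset \iff sU \cap \alpha \neq \emptyset$ and $U \cap s\alpha \neq \emptyset \iff \alpha \cap s^{-1}U \neq\emptyset$. Both are immediate from the definitions of image and preimage and require no commutativity.
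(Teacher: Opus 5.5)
Your proposal is correct and follows essentially the same route as the paper: both rest on the equivalence $U \cap S^{-1}\alpha \neq \emptyset \iff SU \cap \alpha \neq \emptyset$ (and its analogue for $\RP^-$) together with the choice $\alpha = X^2 \setminus \overline{SU}$ when $\overline{SU}$ misses $\Delta$. The paper phrases the forward direction as a contradiction rather than a chain of equivalences, and one small correction to your overview: compactness is not used anywhere. As you yourself note later, only the closedness of $\overline{SU}$ and $\overline{S^{-1}U}$ matters.
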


\begin{proof}
\leanok{}
1. \ Suppose $(x, y) \in \RP = \bigcap_{\alpha} \overline{S^{-1} \alpha}$. Suppose for the sake of a contradiction that there exists an open set $U \subseteq X^2$ containing $(x, y)$ such that $\Delta \cap \overline{S U} = \emptyset$. The set $\alpha = X^2 \setminus \overline{S U}$ is open, contains $\Delta$, and satisfies $SU \cap \alpha = \emptyset$. Since $(x, y) \in \RP$, we have $(x, y) \in \overline{S^{-1} \alpha}$. It follows that $U \cap S^{-1} \alpha \neq \emptyset$ or equivalently, $SU \cap \alpha \neq \emptyset$, which is a contradiction.

Conversely, let $\alpha \subseteq X^2$ be an open set containing $\Delta$. To show that $(x, y) \in \overline {S^{-1} \alpha}$, we will show that $U \cap S^{-1} \alpha \neq \emptyset$ for all open $U \subseteq X^2$ containing $(x,y)$. Let $U$ be an open set containing $(x, y)$. Since $\Delta \cap \overline{S U} \neq \emptyset$, we have that $\alpha \cap SU \neq \emptyset$, whereby $U \cap S^{-1} \alpha \neq \emptyset$, as desired.

2. \ The proof is similar to the proof of \cref{item_desc_of_frp}, using the definition of $\RP^-$ in \eqref{sec_rp_and_seq_for_semi_acts}.
\end{proof}

Recall that \cref{mainthm_RP_is_backward_RP} asserts that for minimal actions of commutative semigroups, $\RP = \RP^-$.  We can prove this now by combining the descriptions above with \cref{lemma_dense_S_minimal_points_in_X2,lemma_forward_backward_set_orb_clos_coincide}.

\begin{proof}[Proof of \cref{mainthm_RP_is_backward_RP}]
    Since $S$ is commutative and $X$ is minimal, \cref{lemma_dense_S_minimal_points_in_X2} gives that the set of $S$-uniformly recurrent points of $X^2$ is dense.
    It follows from \cref{lemma_forward_backward_set_orb_clos_coincide} that for all non-empty, open $U \subseteq X^2$,
    \[\overline{S U} = \overline{S^{-1}U}.\]
    Now $\RP = \RP^-$ follows immediately from the descriptions of $\RP$ and $\RP^-$ in \cref{lemma_frp_iff_nbhd_tran_int_hits_diagonal,lemma_rp_iff_nbhd_tran_int_hits_diagonal}.
\end{proof}

It is not the case that $\RP$ and $\RP^-$ always coincide, as the following example shows.

\begin{example}[{cf. \cite[Ex. 1.1]{dai_xiao_2017}}]
\label{example_rp_not_bacward_rp}
    Let $X = \{0,1\}$ and $f: X \to X$ be $0 \mapsto 1$ and $1 \mapsto 1$.  Let $S = (\N,+)$ act on $X$ by repeated applications of $f$.  It is a short exercise to check that
    \[
        \RP^- = \{(1,1)\} \subsetneq X^2 = \RP.
    \]
\end{example}

The following lemma offers a characterization of $\RP$ that will be useful in the proof of \cref{mainthm_RP_is_eq_relation}.

\begin{lemma}
\label{lemma_rp_implies_ultra}
\lean{xyInRPMIffUltraToSomewwxy}
\leanok{}
    Let $S$ be a semigroup, $X$ be an $S$-system, and $x, y \in X$. Define $\varphi: X^2 \times S \to X^4$ by
    \[\varphi (a, b, s) = (a, b, sa, sb).\]
    The point $(x,y) \in \RP^-$ if and only if there exist $w \in X$ and an ultrafilter $\family$ on $X^2 \times S$ such that $\lim \varphi \family = (w,w,x,y)$.
\end{lemma}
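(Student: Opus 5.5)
We prove both directions by translating between the neighborhood description of $\RP^-$ and convergence of ultrafilters. Recall that $(x,y) \in \RP^-$ means: for all open $U \ni x$, $V \ni y$, and open $\alpha \supseteq \Delta$, there exist $(a,b) \in \alpha$ and $s \in S$ with $(sa,sb) \in U \times V$.

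($\Leftarrow$) Suppose $\family$ is an ultrafilter on $X^2 \times S$ with $\lim \varphi\family = (w,w,x,y)$. Fix open $U \ni x$, $V \ni y$, and open $\alpha \supseteq \Delta$. Since $\alpha$ is an open set containing $(w,w)$, the set $\alpha \times U \times V \subseteq X^4$ is an open neighborhood of $(w,w,x,y)$. Hence $\varphi^{-1}(\alpha \times U \times V) \in \family$, so this set is non-empty. Any element $(a,b,s)$ of it satisfies $(a,b) \in \alpha$ and $s(a,b) \in U \times V$, which is exactly what we need.

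($\Rightarrow$) Suppose $(x,y) \in \RP^-$. The plan is to build a filter base and extend it to an ultrafilter.
\begin{enumerate}
    \item For open $U \ni x$, $V \ni y$, and open $\alpha \supseteq \Delta$, set
    \[
        E(U,V,\alpha) \defeq \{(a,b,s) \in X^2 \times S \mid (a,b) \in \alpha,\ (sa,sb) \in U \times V\}.
    \]
    By hypothesis each $E(U,V,\alpha)$ is non-empty.
    \item The family of these sets is closed under finite intersections, since $E(U,V,\alpha) \cap E(U',V',\alpha') = E(U\cap U', V \cap V', \alpha \cap \alpha')$. It is therefore a filter base.
    \item Extend it to an ultrafilter $\family$ on $X^2 \times S$.
    \item Let $(w_1,w_2,x',y') = \lim \varphi\family$. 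Every neighborhood of $(x,y)$ in the last two coordinates contains some $\varphi E(U,V,\alpha)$ component, so $(x',y') = (x,y)$.
\end{enumerate}

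It remains to show $w_1 = w_2$, and this is the main point. Suppose $w_1 \ne w_2$. By Hausdorffness, choose disjoint open sets $O_1 \ni w_1$ and $O_2 \ni w_2$. Then $\alpha \defeq X^2 \setminus (\overline{O_1} \times \overline{O_2})$ is open and contains $\Delta$, since $\overline{O_1} \cap O_2 = \emptyset$; strictly, we shrink to closures of smaller neighborhoods, which regularity permits. Now $\varphi^{-1}(O_1 \times O_2 \times X^2) \in \family$ because $\varphi\family$ converges to $(w_1,w_2,x,y)$. Also $E(X,X,\alpha) \in \family$. Their intersection is empty, since its first two coordinates would have to lie in both $\alpha$ and $O_1 \times O_2$. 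This is a contradiction, so $w_1 = w_2 =: w$ and $\lim \varphi\family = (w,w,x,y)$.

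The only delicate step is that last one: producing an open $\alpha$ containing the diagonal that excludes a product neighborhood of $(w_1,w_2)$. Choosing $O_1, O_2$ with disjoint closures, or simply taking $\alpha = X^2 \setminus \overline{O_1 \times O_2}$ with $O_1 \times O_2$ disjoint from $\Delta$, handles it.
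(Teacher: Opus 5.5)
Your proof is correct and is essentially the paper's argument. The paper pushes forward, through a choice function $\psi$ on the directed set $N(x)\times N(y)\times N(\Delta)$, an ultrafilter extending the tail filter, and that ultrafilter contains each of your sets $E(U,V,\alpha)$; your filter-base version is a slightly more direct form of the same construction, and it also spells out the diagonal argument that the paper only asserts. One caution: your fallback of taking $\alpha = X^2\setminus\overline{O_1\times O_2}$ with merely $O_1\times O_2$ disjoint from $\Delta$ fails, since in $X=[0,2]$ the sets $O_1=(0,1)$ and $O_2=(1,2)$ give $(1,1)\in\overline{O_1\times O_2}$, so you do need the regularity step producing $\overline{O_1}\cap\overline{O_2}=\emptyset$.
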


\begin{proof}
\leanok{}
    Suppose there exist $w \in X$ and an ultrafilter $\family$ on $X^2 \times S$ such that $\lim \varphi \family = (w,w,x,y)$.  Let $U, V \subseteq X$ and $\alpha \subseteq X^2$ be open sets containing $x, y$ and the diagonal $\Delta$, respectively. Since $\lim \varphi \family = (w,w,x,y)$ and $\alpha \times U \times V$ is an open set containing $(w,w,x,y)$, we have that $\varphi^{-1} (\alpha \times U \times V) \in \family$.  Since $\family$ is an ultrafilter, its members are non-empty.  In particular, there exists $(a,b,s) \in \varphi^{-1} (\alpha \times U \times V)$.  In other words, $(a,b,sa,sb) \in \alpha \times U \times V$, showing that $(U \times V) \cap s \alpha \neq \emptyset$.  By definition, it follows that $(x,y) \in \RP^-$.

    Conversely, suppose $(x,y) \in \RP^-$.  Denote by $N(x)$ the set of open subsets of $X$ containing $x$, by $N(y)$ the set of open subsets of $X$ containing $y$, and by $N(\Delta)$ the set of open subsets of $X^2$ containing the diagonal.  By the definition of $\RP^-$, for all $(U,V,\alpha) \in N(x) \times N(y) \times N(\Delta)$, there exists $(a, b, s) \in X^2 \times S$ such that $(a,b) \in \alpha$ and $s (a,b) \in U \times V$. This gives us a map $\psi$ from $N(x) \times N(y) \times N(\Delta)$ to $X^2 \times S$ defined by $\psi: (U,V,\alpha) \mapsto (a,b,s)$.
    
    The sets $N(x)$, $N(y)$, and $N(\Delta)$ are directed by inclusion, so $N(x) \times N(y) \times N(\Delta)$ is directed.  Let $\mathcal{G}$ be an ultrafilter extending a cofinal filter on $N(x) \times N(y) \times N(\Delta)$.  We see that $\family \defeq \psi \mathcal{G}$ is an ultrafilter on $X^2 \times S$.  Because $\mathcal{G}$ extends a cofinal filter,
    \[\lim \varphi \family \in \Delta \times \{x\} \times \{y\},\]
    and hence there exists $w \in X$ such that $\lim \varphi \family = (w,w,x,y)$, as desired.
\end{proof}

\subsection{Regional proximality in factors}

The goal in this subsection is to prove \cref{lemma_rp_functorial_property_minimal}, which says roughly that the image of $\RP_X$ under a factor map $\pi: X \to Y$ is $\RP_Y$.  To begin, we show under no assumptions that $(\pi \times \pi) \RP_X \subseteq \RP_Y$.  In the group-action setting, a modern reference is \cite[V. 2.12 and 2.14, pg. 416]{devries_book_1993}.

\begin{lemma}
    \label{lemma_frp_functorial_property}
    \label{lemma_rp_functorial_property}
    \lean{imageOfRPIsInRP}
    \lean{imageOfRPMIsInRPM}
    \leanok{}
    Let $X$ be a $S$-system and $\pi: X \to Y$ be a factor map.
    \begin{enumerate}
        \item \label{item_frp_half_functorial}
        $(\pi \times \pi) \RP_X \subseteq \RP_Y$, and 

        \item \label{item_rp_half_functorial}
        $(\pi \times \pi) \RP^-_X \subseteq \RP^-_Y$.
    \end{enumerate}
\end{lemma}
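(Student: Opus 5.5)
Both inclusions follow from the fact that $\pi \times \pi$ is a continuous, closed map that intertwines the diagonal actions of $S$ on $X^2$ and $Y^2$. Throughout, $\alpha \subseteq Y^2$ is an open set containing $\Delta_Y$, and $\beta \defeq (\pi\times\pi)^{-1}\alpha$, which is an open subset of $X^2$ containing $\Delta_X$.

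\emph{Part 1.} The first step is to check, directly from the intertwining relation $s\pi = \pi s$, that
\[
(\pi\times\pi)\, s^{-1}\beta \subseteq s^{-1}\alpha
\]
for every $s \in S$. Indeed, if $(sw,sz) \in \beta$ then $(s\pi w, s\pi z) = (\pi s w, \pi s z) \in \alpha$. Taking the union over $s$ gives $(\pi\times\pi)\, S^{-1}\beta \subseteq S^{-1}\alpha$. Next, \cref{lemma_image_closure_equals_closure_image}, applied to $\pi \times \pi$ (a continuous map of compact Hausdorff spaces), yields
\[
(\pi\times\pi)\RP_X \subseteq (\pi\times\pi)\overline{S^{-1}\beta} = \overline{(\pi\times\pi) S^{-1}\beta} \subseteq \overline{S^{-1}\alpha}.
\]
Since $\alpha$ was arbitrary, intersecting over all such $\alpha$ gives $(\pi\times\pi)\RP_X \subseteq \RP_Y$.

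\emph{Part 2.} The argument is the same, with forward images in place of preimages. From $(\pi\times\pi) s = s(\pi\times\pi)$ we get
\[
(\pi\times\pi)\, s\beta = s\,(\pi\times\pi)\beta \subseteq s\alpha,
\]
so $(\pi\times\pi) S\beta \subseteq S\alpha$. Applying \cref{lemma_image_closure_equals_closure_image} again gives
\[
(\pi\times\pi)\RP^-_X \subseteq (\pi\times\pi)\overline{S\beta} \subseteq \overline{S\alpha},
\]
and intersecting over $\alpha$ gives $(\pi\times\pi)\RP^-_X \subseteq \RP^-_Y$.

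There is no real obstacle. The only point that needs care is to pull $\alpha$ back to $\beta$, rather than trying to push open sets of $X^2$ forward, since images of open sets under $\pi \times \pi$ need not be open. Surjectivity of $\pi$ is not used in either part.
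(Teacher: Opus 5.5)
Your proof is correct and takes essentially the same route as the paper. In both, an entourage of $Y$ is paired with one of $X$, the intertwining relation carries $S^{-1}\beta$ (resp.\ $S\beta$) into $S^{-1}\alpha$ (resp.\ $S\alpha$), and \cref{lemma_image_closure_equals_closure_image} handles the closures. The differences are only cosmetic: you take $\beta=(\pi\times\pi)^{-1}\alpha$ explicitly where the paper cites uniform continuity of $\pi$, and you check the commutation step by hand rather than citing \cref{lemma_commuting_map_algebra}.
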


\begin{proof}
\leanok{}
    Let $\beta \subseteq Y^2$ be an open set containing $\Delta_Y$.  Since $\pi: X \to Y$ is continuous and $X$ is compact, $\pi$ is uniformly continuous.  In particular, there exists an open set $\alpha \subseteq X^2$ containing $\Delta_X$ such that $(\pi \times \pi) \alpha \subseteq \beta$.  We then see by  \cref{lemma_commuting_map_algebra} and \cref{lemma_image_closure_equals_closure_image} that
    \begin{align*}
        (\pi \times \pi) \RP_X &\subseteq (\pi \times \pi) \overline{S^{-1} \alpha} \subseteq \overline{(\pi \times \pi) S^{-1} \alpha} \subseteq \overline{S^{-1}(\pi \times \pi) \alpha} \subseteq \overline{S^{-1} \beta}, 
        \text{ and}\\
        (\pi \times \pi) \RP^-_X &\subseteq (\pi \times \pi) \overline{S \alpha} \subseteq \overline{(\pi \times \pi) S \alpha} = \overline{S(\pi \times \pi) \alpha} \subseteq \overline{S \beta},
    \end{align*}
    as was to be shown.
\end{proof}

\begin{theorem}
\label{lemma_frp_iff_nbhd_tran_int_contains_diagonal}
\lean{inMinCommxyInRPIffNhdOrbitClosContainsDiag}
\leanok{}
    Let $S$ be a commutative semigroup, $X$ be a minimal $S$-system, and $x, y \in X$.  The point $(x,y) \in \RP$ if and only if for all open $U \subseteq X^2$ containing $(x,y)$,
    \[\Delta \subseteq \overline{S U}.\]
    The three other statements gotten by independently replacing $S$ by $S^{-1}$ and $\RP$ by $\RP^-$ also hold.
\end{theorem}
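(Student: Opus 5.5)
The reverse implication needs nothing new. If $\Delta \subseteq \overline{SU}$ for every open $U \ni (x,y)$, then in particular $\Delta \cap \overline{SU} \neq \emptyset$, so \cref{lemma_frp_iff_nbhd_tran_int_hits_diagonal} gives $(x,y) \in \RP$. The same reasoning works for the $S^{-1}$ and $\RP^-$ variants. Moreover, \cref{lemma_dense_S_minimal_points_in_X2} and \cref{lemma_forward_backward_set_orb_clos_coincide} (applied with $A = B = S$, which is trivially thick) give $\overline{SU} = \overline{S^{-1}U}$ for every non-empty open $U$. By \cref{mainthm_RP_is_backward_RP}, $\RP = \RP^-$. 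So all four statements reduce to one: if $(x,y) \in \RP$ and $U \ni (x,y)$ is open, then $\Delta \subseteq \overline{SU}$.

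For the forward direction, fix $(x,y)\in\RP$, an open $U \ni (x,y)$, and a point $(z,z) \in \Delta$. Let $W$ be an open neighbourhood of $(z,z)$; the goal is to show $W \cap SU \neq \emptyset$.

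\begin{enumerate}
\item By \cref{lemma_frp_iff_nbhd_tran_int_hits_diagonal}, $\overline{SU}$ contains some diagonal point $(w,w)$.
\item The set $\overline{SU}$ is $S$-invariant. Indeed, for $s\in S$, \cref{lemma_image_closure_equals_closure_image} gives $s\overline{SU} = \overline{sSU} \subseteq \overline{SU}$.
\item Hence $\overline{SU}$ contains the orbit closure of $(w,w)$. This equals $\{(v,v) : v \in \overline{Sw}\} = \Delta$, by minimality of $X$.
\end{enumerate}

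This settles the claim, and it does not even use commutativity beyond what is already built into \cref{mainthm_RP_is_backward_RP}.

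The only step I expect might need care is the invariance in step 2 together with the observation that the orbit closure of a diagonal point is all of $\Delta$ in a minimal system. Both are immediate: closures of $S$-invariant sets are $S$-invariant for continuous maps on compact Hausdorff spaces, and $\overline{S(w,w)}$ is the diagonal embedding of $\overline{Sw} = X$. The genuinely nontrivial input is the coincidence $\overline{SU} = \overline{S^{-1}U}$, which is needed only to transfer the result to the $S^{-1}$ versions. For those, one could alternatively run a direct argument, but invoking \cref{lemma_forward_backward_set_orb_clos_coincide} is cleanest.
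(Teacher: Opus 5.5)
Your proof is correct. Its skeleton matches the paper's: reduce the other three variants to the $(\RP, \overline{SU})$ statement via \cref{mainthm_RP_is_backward_RP} and the coincidence $\overline{SU} = \overline{S^{-1}U}$. Then show that $\overline{SU}$ is closed, $S$-invariant and meets $\Delta$, so that it contains $\Delta$ by minimality.

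The difference is in the invariance step. The paper proves the equality $s\overline{SU} = \overline{SsU} = \overline{SU}$. That uses commutativity ($sS = Ss$), the thickness of $Ss$ (\cref{lemma_thick_right_shift_is_thick}), and \cref{lemma_forward_backward_set_orb_clos_coincide} under the Bronstein condition from \cref{lemma_dense_S_minimal_points_in_X2}. But $\overline{SU} \cap \Delta$ is a subsystem of $\Delta$ as soon as the inclusion $s\overline{SU} \subseteq \overline{SU}$ holds, and, as you observe, that inclusion is immediate from $sS \subseteq S$.

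Your route therefore shows more. The first equivalence, $(x,y) \in \RP$ if and only if $\Delta \subseteq \overline{SU}$ for all open $U \ni (x,y)$, holds for minimal actions of arbitrary semigroups, since \cref{lemma_frp_iff_nbhd_tran_int_hits_diagonal} needs no hypotheses either. Commutativity enters only when passing to the $S^{-1}$ and $\RP^-$ variants. The paper's stronger equality $sZ = Z$ is not used anywhere in its argument.
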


\begin{proof}
\leanok{}
    Note that the ``if'' statement follows immediately from \cref{lemma_frp_iff_nbhd_tran_int_hits_diagonal}.
    
    To see the ``only if'' direction, let $U \subseteq X^2$ be an open set containing $(x,y)$.  Define
    \[Z \defeq \overline{S U}.\]
    We will show that $Z \cap \Delta$ is a subsystem of $\Delta$, that is, a non-empty, closed, and $S$-invariant subset of $\Delta$. That $Z \cap \Delta$ is closed is immediate. That $Z \cap \Delta$ is non-empty follows from \cref{lemma_frp_iff_nbhd_tran_int_hits_diagonal}.
    To see that $Z \cap \Delta$ is $S$-invariant, let $s \in S$.  Since $\Delta$ is $S$-invariant, it suffices to show that $Z$ is $S$-invariant. By our assumptions and \cref{lemma_dense_S_minimal_points_in_X2}, the $S$-system $X^2$ has a dense set of uniformly recurrent points.  By \cref{lemma_thick_right_shift_is_thick}, the set $Ss$ is thick, so by \cref{lemma_forward_backward_set_orb_clos_coincide}, $\overline{SsU} = \overline{SU}$. Therefore, by \cref{lemma_image_closure_equals_closure_image},
    \[
        sZ = s \overline{S U} = \overline{sS U} = \overline{Ss U} = Z.
    \]

    Since $X$ is a minimal $S$-system, the diagonal $\Delta$ is a minimal $S$-system. Since $Z \cap \Delta$ is a subsystem of $\Delta$, we get that $Z \cap \Delta = \Delta$. Therefore, $\Delta \subseteq Z$, as was to be shown.

    To see that the three other statements gotten by independently replacing $S$ by $S^{-1}$ and $\RP$ by $\RP^-$ also hold, note that by \cref{lemma_forward_backward_set_orb_clos_coincide}, for all non-empty, open $U \subseteq X^2$,
    \[\overline{S U} = \overline{S^{-1}U}.\]
    Also, by \cref{mainthm_RP_is_backward_RP}, we have $\RP = \RP^-$.  Therefore, given the assumptions of the theorem, all four statements are equivalent.
\end{proof}

\begin{theorem}
\label{lemma_rp_functorial_property_minimal}
\lean{commMinRPIsInImageOfRP}
\lean{imageOfRPIsInRP}
\lean{imageOfRPMIsInRPM}
\lean{}
    Let $S$ be a commutative semigroup, $X$ be a minimal $S$-system, and $\pi: X \to Y$ be a factor map of $S$-systems.  Then $\RP_Y = (\pi \times \pi) \RP_X$. The same statement holds with $\RP$ replaced by $\RP^-$.
\end{theorem}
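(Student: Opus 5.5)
We prove the two inclusions separately. The inclusion $(\pi \times \pi)\RP_X \subseteq \RP_Y$ is \cref{lemma_frp_functorial_property}. For the reverse inclusion we use \cref{mainthm_RP_is_backward_RP}: $X$ is minimal, and $Y$ is minimal as a factor of a minimal system, so $\RP_X = \RP^-_X$ and $\RP_Y = \RP^-_Y$. It therefore suffices to prove $\RP^-_Y \subseteq (\pi \times \pi)\RP^-_X$. This handles the $\RP$ and the $\RP^-$ versions of the statement at once.

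Fix $(y_1,y_2) \in \RP^-_Y$ and put $K \defeq \pi^{-1}y_1 \times \pi^{-1}y_2$, a non-empty compact subset of $X^2$. For each open $\alpha \subseteq X^2$ containing $\Delta_X$, define the compact set
\[
F_\alpha \defeq K \cap \overline{S\alpha}.
\]
If $\alpha \subseteq \alpha'$ then $F_\alpha \subseteq F_{\alpha'}$, and the family of such $\alpha$ is closed under finite intersections, so $\{F_\alpha\}_\alpha$ has the finite intersection property as soon as every $F_\alpha$ is non-empty. In that case compactness gives a point of $\bigcap_\alpha F_\alpha = K \cap \RP^-_X$, and this point maps to $(y_1,y_2)$ under $\pi \times \pi$, which is what we want. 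So the whole proof reduces to showing $F_\alpha \neq \emptyset$ for every $\alpha$.

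Suppose instead that $F_\alpha = \emptyset$ for some $\alpha$. Then $K$ lies in the open set $X^2 \setminus \overline{S\alpha}$. Since $K$ is a product of two compact sets, the tube lemma gives open sets $U_1 \supseteq \pi^{-1}y_1$ and $U_2 \supseteq \pi^{-1}y_2$ with $(U_1 \times U_2) \cap S\alpha = \emptyset$. By \cref{lemma_open_nbhd_of_fiber_contains_open_preimage} we can shrink these to open sets $V_i \ni y_i$ in $Y$ with $\pi^{-1}V_i \subseteq U_i$.

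Next, \cref{lemma_image_of_entourage_somewhere_dense_along_diagonal} gives a non-empty open set $V \subseteq Y$ with $V \times V \subseteq (\pi\times\pi)\alpha$. Pick $v \in V$. The $S^{-1}$ version of \cref{lemma_frp_iff_nbhd_tran_int_contains_diagonal}, applied in $Y$ to the open set $V_1 \times V_2 \ni (y_1,y_2)$, gives $(v,v) \in \overline{S^{-1}(V_1\times V_2)}$. So there are $s \in S$ and $(c,d) \in V \times V$ with $s(c,d) \in V_1 \times V_2$. Lift $(c,d)$ to some $(a,b) \in \alpha$ with $\pi a = c$ and $\pi b = d$. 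Then
\[
\pi(sa) = sc \in V_1 \quad\text{and}\quad \pi(sb) = sd \in V_2,
\]
so $s(a,b) \in (\pi^{-1}V_1 \times \pi^{-1}V_2) \cap S\alpha \subseteq (U_1\times U_2)\cap S\alpha$. This contradicts $(U_1 \times U_2) \cap S\alpha = \emptyset$, so every $F_\alpha$ is non-empty and the proof is complete.

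The main obstacle is the lifting step. A point of $\alpha$ can be pushed forward by $s$, but a point near the diagonal in $Y$ cannot in general be lifted to a point near the diagonal in $X$. This is why the argument must run through the backward form $\RP^-$, combining the $S^{-1}$ version of \cref{lemma_frp_iff_nbhd_tran_int_contains_diagonal} with \cref{lemma_image_of_entourage_somewhere_dense_along_diagonal}. It also needs \cref{lemma_frp_iff_nbhd_tran_int_contains_diagonal} to place the whole diagonal, and not just one point of it, in the orbit closure, so that the diagonal point can be chosen inside the prescribed set $V$.
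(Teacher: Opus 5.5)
Your proof is correct and follows essentially the same route as the paper. You use the same ingredients: the reduction to $\RP^-$ via \cref{mainthm_RP_is_backward_RP}, then \cref{lemma_open_nbhd_of_fiber_contains_open_preimage}, \cref{lemma_image_of_entourage_somewhere_dense_along_diagonal}, and the $S^{-1}$/$\RP^-$ form of \cref{lemma_frp_iff_nbhd_tran_int_contains_diagonal} applied in the minimal factor $Y$. The only difference is how you package the compactness step: the paper argues contrapositively, covering the fiber $(\pi\times\pi)^{-1}\{(y_1,y_2)\}$ by finitely many sets $V_z$ with individual entourages $\alpha_z$ and intersecting those entourages, whereas you fix one entourage $\alpha$ and run a finite-intersection argument on $K\cap\overline{S\alpha}$ with the tube lemma, which is the dual form of the same argument.
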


\begin{proof}
\leanok{}
    By \cref{lemma_frp_functorial_property}, $(\pi \times \pi) \RP_X \subseteq \RP_Y$, so we only need to prove the reverse inclusion.  To see that $\RP_Y \subseteq (\pi \times \pi) \RP_X$, we will prove that $\RP_Y^- \subseteq (\pi \times \pi) \RP^-_X$ by showing that $Y \setminus (\pi \times \pi) \RP^-_X \subseteq Y \setminus \RP_Y^-$. This suffices since $\RP_X = \RP_X^-$ and $\RP_Y = \RP_Y^-$ by \cref{mainthm_RP_is_backward_RP}.
    
    Suppose $(x,y) \in Y \setminus (\pi \times \pi) \RP^-_X$.  For all $z \in (\pi \times \pi)^{-1} \{(x,y)\}$, we have that $z \not\in \RP^-_X$.  By the definition of $\RP^-_X$, there exists an open set $V_{z} \subseteq X^2$ and an open set $\alpha_z \subseteq X^2$ containing $\Delta_X$ such that
    \[S^{-1}V_{z} \cap \alpha_z = \emptyset.\]

    Now
    \[
        (\pi \times \pi)^{-1} \{(x,y)\} \subseteq \bigcup_{z \in (\pi \times \pi)^{-1} \{(x,y)\}} V_{z},
    \]
    so by compactness, there exists a finite set $F \subseteq (\pi \times \pi)^{-1} \{(x,y)\}$ such that
    \[
        (\pi \times \pi)^{-1} \{(x,y)\} \subseteq \bigcup_{z \in F} V_{z}.
    \]
    We have by \cref{lemma_open_nbhd_of_fiber_contains_open_preimage} (with $(x,y)$ as $y$ and $\bigcup_{z \in F} V_{z}$ as $W$) that there exists an open set $U \subseteq Y^2$ containing $(x,y)$ such that $(\pi \times \pi)^{-1} U \subseteq \bigcup_{z \in F} V_{z}$. 

    Put $\beta = \bigcap_{z \in F} \alpha_z$; it is an open set containing $\Delta_X$. We see that
    \begin{align*}
        \emptyset = \bigcup_{z \in F} S^{-1}V_z \cap \beta  &= \big( S^{-1} \bigcup_{z \in F} V_z \big) \cap \beta \\
        &\supseteq \big(S^{-1} (\pi \times \pi)^{-1} U \big) \cap \beta \\
        & = \big((\pi \times \pi)^{-1} S^{-1} U \big) \cap \beta,
    \end{align*} 
    whereby $\big((\pi \times \pi)^{-1} S^{-1} U \big) \cap \beta = \emptyset$, or, equivalently,
    \[
        \big(S^{-1} U \big) \cap (\pi \times \pi) \beta  = \emptyset.
    \]
    
    By \cref{lemma_image_of_entourage_somewhere_dense_along_diagonal}, there exists a non-empty, open set $Z \subseteq Y$ such that $Z \times Z \subseteq (\pi \times \pi) \beta$, so
    \[
        \big(S^{-1} U \big) \cap (Z \times Z)  = \emptyset.
    \]
    It follows that $\Delta \not\subseteq \overline{S^{-1} U}.$
    Since $U$ is an open subset of $Y^2$ containing $(x,y)$, we see by \cref{lemma_frp_iff_nbhd_tran_int_contains_diagonal} that $(x,y) \not\in \RP_Y^-$, as was to be shown.
\end{proof}

\subsection{Regional proximality, equicontinuity, and a proof of \cref{mainthm_equi_icer_iff_contains_rp}}
\label{sec_rp_equicontint_and_proof_of_a}

A modern reference for \cref{lemma_rp_and_equicontinuity} below in the group-action setting is \cite[pg. 283]{devries_book_1993}.  The statement in 1. below is proven in \cite[Cor. 1.16]{auslander_dai_2019} and \cite[Thm. 1.5]{dai_xiao_2017}.  Our take in this paper appears to be different, so we provide it here.

\begin{lemma}
    \label{lemma_rp_and_equicontinuity}
    \lean{equiSystemIffRPMInDiag}
    \lean{equicontinuousIffRPTrivialAndSurjective}
    Let $S$ be a semigroup and $X$ be an $S$-system.
    \begin{enumerate}
        \item \label{item_eq_iff_rp_in_delta}
        $\RP \subseteq \Delta$ if and only if $X$ is equicontinuous and $S$ acts on $X$ by surjections.
        
        \item \label{item_eq_iff_rpminus_in_delta}
        $\RP^- \subseteq \Delta$ if and only if $X$ is equicontinuous.
    \end{enumerate}
\end{lemma}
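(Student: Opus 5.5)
Both items should follow by unwinding the definitions and matching them with the characterizations already proved in \cref{lemma_easy_char_of_equi_and_backward_equi} and \cref{thm_main_equi_backward_equi_thm}. The plan is to rewrite $\RP$ and $\RP^-$ as intersections of closures of $S^{-1}\alpha$ and $S\alpha$, and then invoke those results.

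First, I would record the notational identifications. By the conventions in the preliminaries,
\[\bigcup_{s \in S} s^{-1}\alpha = S^{-1}\alpha \qquad \text{and} \qquad \bigcup_{s \in S} s\alpha = S\alpha,\]
where $S^{-1}\alpha$ is the set of pairs $(w,z)$ such that some $s \in S$ has $s(w,z) \in \alpha$. Hence
\[\RP = \bigcap_\alpha \overline{S^{-1}\alpha} \qquad \text{and} \qquad \RP^- = \bigcap_\alpha \overline{S\alpha},\]
with both intersections taken over open $\alpha \subseteq X^2$ containing $\Delta$. These are precisely the sets appearing in \cref{lemma_easy_char_of_equi_and_backward_equi}.

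For item \ref{item_eq_iff_rpminus_in_delta}, the first equivalence of \cref{lemma_easy_char_of_equi_and_backward_equi} says that $X$ is equicontinuous if and only if $\bigcap_\alpha \overline{S\alpha} \subseteq \Delta$. By the identification above, this is exactly the condition $\RP^- \subseteq \Delta$, so item \ref{item_eq_iff_rpminus_in_delta} is immediate.

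For item \ref{item_eq_iff_rp_in_delta}, the second equivalence of \cref{lemma_easy_char_of_equi_and_backward_equi} gives that $\RP \subseteq \Delta$ if and only if $X$ is backward equicontinuous. Then \cref{thm_main_equi_backward_equi_thm} says that backward equicontinuity is equivalent to $X$ being equicontinuous with $S$ acting by surjections. Chaining these two equivalences gives item \ref{item_eq_iff_rp_in_delta}.

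I expect no real obstacle, since all the substantive work already sits in \cref{thm_main_equi_backward_equi_thm}: it uses the Bronstein-type \cref{lemma_forward_backward_set_orb_clos_coincide} together with semisimplicity of $X^2$. The only point needing care is to check that the set-theoretic notation $S^{-1}\alpha$ means the union $\bigcup_s s^{-1}\alpha$, and not some inverse-semigroup construction, so that the definition of $\RP$ matches the expression in \cref{lemma_easy_char_of_equi_and_backward_equi} verbatim.
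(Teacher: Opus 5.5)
Your proposal is correct and is essentially the paper's proof. Item 2 comes from the forward characterization in \cref{lemma_easy_char_of_equi_and_backward_equi}, and item 1 comes from the backward characterization there chained with \cref{thm_main_equi_backward_equi_thm}, using the identification $S^{-1}\alpha = \bigcup_{s \in S} s^{-1}\alpha$ from the preliminaries.
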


\begin{proof}
\leanok{}
    To prove 1., note that combining the definition of $\RP$ from \cref{sec_rp_and_seq} with the characterization of backward equicontinuity in \cref{lemma_easy_char_of_equi_and_backward_equi}, it suffices to prove that $X$ is backward equicontinuous if and only if $X$ is equicontinuous and $S$ acts by surjections.  This is precisely the content of \cref{thm_main_equi_backward_equi_thm}.
    
    The statement in 2. follows immediately by combining the definition of $\RP^-$ from \cref{sec_rp_and_seq_for_semi_acts} with the characterization of equicontinuity in \cref{lemma_easy_char_of_equi_and_backward_equi}.
\end{proof}

\cref{mainthm_equi_icer_iff_contains_rp} is an immediate consequence of the following theorem.

\begin{theorem}
    \label{lemma_rp_characterization_of_equicont}
    \label{lemma_rp_characterization_of_equicont_icer}
    \lean{minimalFactorEquicontinuousIffRPInFactorRelation}
    \lean{minimalICEREquicontinuousIffRPInICER}
    Let $S$ be a commutative semigroup and $X$ be a minimal $S$-system.
    \begin{enumerate}
        \item A factor $\pi: X \to Y$ is equicontinuous if and only if $\RP_X \subseteq R_\pi$.
        
        \item An ICER $Q$ of $X$ is equicontinuous if and only if $\RP_X \subseteq Q$.
    \end{enumerate}
    The same statement holds with $\RP_X$ replaced by $\RP_X^-$.
\end{theorem}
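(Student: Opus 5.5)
The plan is to reduce everything to the statement about $\RP$ via \cref{mainthm_RP_is_backward_RP}. That theorem gives $\RP_X = \RP^-_X$, and then the version with $\RP^-_X$ is identical. Next, item 2 follows from item 1 applied to the quotient factor $X \to X/Q$: here $R_\pi = Q$ by construction, and the ICER $Q$ is equicontinuous by definition exactly when this factor is. So it suffices to prove item 1.

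For item 1, I will pass to $Y$ using \cref{lemma_rp_functorial_property_minimal}. Since $S$ is commutative and $X$ is minimal, $\RP_Y = (\pi \times \pi)\RP_X$. Hence
\[
\RP_X \subseteq R_\pi \iff (\pi\times\pi)\RP_X \subseteq \Delta_Y \iff \RP_Y \subseteq \Delta_Y .
\]
This reduces the claim to showing that $Y$ is equicontinuous if and only if $\RP_Y \subseteq \Delta_Y$.

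\cref{lemma_rp_and_equicontinuity}(1) says $\RP_Y \subseteq \Delta_Y$ if and only if $Y$ is equicontinuous and $S$ acts on $Y$ by surjections. The remaining step is to remove the surjectivity condition. For this I note that $Y$ is a factor of a minimal system, so $Y$ is minimal; since $S$ is commutative, \cref{lemma_minimal_comm_action_implies_surjective} then shows that every $s \in S$ acts surjectively on $Y$. With surjectivity automatic, equicontinuity of $Y$ is equivalent to $\RP_Y \subseteq \Delta_Y$, which proves item 1.

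Alternatively, one could skip surjectivity: by \cref{lemma_rp_and_equicontinuity}(2), equicontinuity of $Y$ is equivalent to $\RP^-_Y \subseteq \Delta_Y$. Applying \cref{mainthm_RP_is_backward_RP} to the minimal commutative system $Y$ gives $\RP^-_Y = \RP_Y$, which yields the same conclusion.

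I do not expect a serious obstacle, because the hard input, \cref{lemma_rp_functorial_property_minimal}, is already available. The only points needing care are checking that $Y$ inherits minimality, so that the commutative-minimal results apply to $Y$, and confirming that the identification $R_\pi = \{(x,y) : \pi x = \pi y\}$ makes the first equivalence in the display hold.
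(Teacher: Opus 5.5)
Your proof is correct and is essentially the paper's argument: reduce item 2 to item 1 via the quotient factor $X \to X/Q$, transfer the condition to $Y$ with \cref{lemma_rp_functorial_property_minimal}, and finish with \cref{lemma_rp_and_equicontinuity}. The paper works with $\RP^-$ from the start and applies part 2 of that lemma directly, which is your ``alternative''; the surjectivity check via minimality of $Y$ in your main route is valid but not needed.
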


\begin{proof}
\leanok{}
    Assertions 1. and 2. are equivalent.  Indeed, by definition, an ICER $Q$ is equicontinuous if and only if the factor $X \to X/Q$ is equicontinuous, and $R_{X \to X/Q} = Q$.  Moreover, by \cref{mainthm_RP_is_backward_RP}, we have $\RP_X = \RP_X^-$.  Therefore, it suffices to prove assertion 1. for $\RP_X^-$.

    Let $\pi: X \to Y$ be a factor. Note that by the definition of $R_\pi$, we have $\RP_X^- \subseteq R_\pi$ if and only if $(\pi \times \pi) \RP_X^- \subseteq \Delta_Y$.  By \cref{lemma_rp_functorial_property_minimal}, this happens if and only if $\RP^-_Y \subseteq \Delta_Y$.  By \cref{lemma_rp_and_equicontinuity}, this happens if and only if $Y$ is equicontinuous.
\end{proof}

\section{Corner space}
\label{sec_corner_space}

In this section, we set up the main dynamical tool behind the proof of \cref{mainthm_RP_is_eq_relation}, the corner space.  It is a simple instance of the Host-Kra dynamical cubes that are the modern standard for capturing nilpotent structure in dynamical systems.  Throughout this section, let $S$ be a commutative semigroup and $X$ be an $S$-system.

\subsection{Action of $S^2$ on $X^3$}
\label{sec:action_s2_x3}

For $s, t \in S$, define maps $T_{L,s}, T_{R,t}: X^3 \to X^3$ by
\begin{align*}
    T_{L,s}(x,y,z) &= (sx, sy, z)\\
    T_{R,t}(x,y,z) &= (x, ty, tz).
\end{align*}
Since $S$ is commutative, the map $S^2 \times X^3 \to X^3$ given by
    \[(s,t)(x,y,z) \defeq T_{L,s}T_{R,t}(x,y,z) = (sx,sty,tz)\]
makes $X^3$ an $S^2$-system.

In the following lemma, the action of $S$ on $X^2$ is the diagonal action $s (x, y) = (sx, sy)$ while the action of $S^2$ on $X^3$ is the one just described.

\begin{lemma}
    \label{lemma_S_orb_closure_contained_in_S2_orbit_closure_v2}
    \label{lemma_S_orb_closure_contained_in_S2_orbit_closure_v3}
    \lean{liftTo3Right}
    \lean{liftTo3Left}
    \leanok{}
    If $(x,y,z) \in X^3$ is $S^2$-uniformly recurrent, then
    \begin{enumerate}
        \item \label{item_lift_one}
        $\overline{S (x,y)} \times \{z\} \subseteq \overline{S^2(x,y,z)}$, and
        \item \label{item_lift_two}
        $\{x\} \times \overline{S (y,z)} \subseteq \overline{S^2(x,y,z)}$.
    \end{enumerate}
\end{lemma}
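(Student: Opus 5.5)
We prove item \ref{item_lift_one}; item \ref{item_lift_two} follows by the symmetric argument, swapping the roles of $T_{L,\cdot}$ and $T_{R,\cdot}$.

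The idea is to use the maps $T_{L,s}$ to move the first two coordinates along the $S$-orbit of $(x,y)$ while keeping $z$ fixed. The map $T_{L,s}$ changes $(x,y,z)$ to $(sx,sy,z)$, but it lies in the $S^2$-action only in combination with some $T_{R,t}$. So the plan is to fold the unwanted $T_{R,t}$ back using uniform recurrence of the point $(x,y,z)$.

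Fix $s \in S$ and an open neighborhood $W = U_1 \times U_2 \times U_3$ of $(sx,sy,z)$. It suffices to show that $W$ meets $S^2(x,y,z)$. Indeed, this gives $(sx,sy,z) \in \overline{S^2(x,y,z)}$ for every $s$. Taking closures and using that $\overline{S^2(x,y,z)}$ is closed then yields $\overline{S(x,y)} \times \{z\} \subseteq \overline{S^2(x,y,z)}$: a point $(a,b) \in \overline{S(x,y)}$ is a limit of points $(s_i x, s_i y)$, so $(a,b,z)$ is a limit of the points $(s_i x, s_i y, z)$.

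To get the required $S^2$-orbit point, proceed as follows.

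\begin{enumerate}
\item By continuity of $s$, pick open sets $U_1' \ni x$ and $U_2' \ni y$ with $sU_1' \subseteq U_1$ and $s U_2' \subseteq U_2$.
\item Consider the open set $U_1' \times U_2' \times U_3$, which contains $(x,y,z)$. Since $(x,y,z)$ is $S^2$-uniformly recurrent, its return set $R \subseteq S^2$ to this open set is syndetic.
\item Show that $R$ contains a pair of the form $(t,t)$. Concretely, we want $t$ with $(tx, t y, t z)$ wait — this does not work directly as stated, so we must choose the pair more carefully. Take any $(a,b) \in R$. Then $(ax, aby, bz) \in U_1' \times U_2' \times U_3$.
\item Apply $(s,1)$. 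This is problematic since $S$ need not have an identity. Instead, use the element $(sa, b)$, which sends $(x,y,z)$ to $(sax, saby, bz)$. Its first coordinate lies in $sU_1' \subseteq U_1$, its second in $sU_2' \subseteq U_2$, and its third in $U_3$.
\end{enumerate}

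Hence $(sa,b)(x,y,z) \in W$. Notably, this step needs only that $(x,y,z)$ returns to $U_1' \times U_2' \times U_3$ at least once under $S^2$. That follows from uniform recurrence, since the return set is syndetic and therefore nonempty.

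The only real subtlety is the lack of an identity in $S$: one cannot apply $T_{L,s}$ alone. This is resolved by first returning near $(x,y,z)$ via some $(a,b) \in S^2$ and then composing with $T_{L,s}$, as in the last step above.
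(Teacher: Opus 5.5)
Your argument is correct and is essentially the paper's proof: your set $U_1' \times U_2' \times U_3$ is an open subset of $T_{L,s}^{-1}W$ containing $(x,y,z)$, and passing from a return time $(a,b)$ to the element $(sa,b)$ is exactly the paper's step, with item 2 handled symmetrically via $T_{R,s}$. Before writing it up, delete the aborted ``$(t,t)$'' remark in step 3. Also, since $X$ need not be metrizable, phrase the closure step as $\overline{S(x,y)} \times \{z\} = \overline{S(x,y) \times \{z\}} \subseteq \overline{S^2(x,y,z)}$ rather than through sequences.
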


\begin{proof}
\leanok{}
    Suppose $(x,y,z) \in X^3$ is $S^2$-uniformly recurrent.
    To see \ref{item_lift_one}., we will show that for all $s \in S$, $(sx,sy,z) \in \overline{S^2(x,y,z)}$.  Let $s \in S$, and let $W \subseteq X^3$ be an open set containing $(sx,sy,z)$. The map $T_{L,s}: X^3 \to X^3$ is continuous, so $T_{L,s}^{-1}W$ is an open set containing $(x,y,z)$. Since $(x,y,z)$ is uniformly recurrent, there is $(a,b) \in S^2$ such that $(a,b)(x,y,z) \in T_{L,s}^{-1}W$.  Therefore, $(as,b)(x,y,z) \in W$.  Therefore, $W \cap \overline{S^2(x,y,z)} \neq \emptyset$.  Since $W$ was arbitrary, we see that $(sx,sy,z) \in \overline{S^2(x,y,z)}$.

    The proof of \ref{item_lift_two}. is nearly identical, using $T_{R,s}$ instead of $T_{L,s}$. 
\end{proof}

\subsection{Action of $S^2$ on $X^2$}

Overloading the notation, for $s, t \in S$, we define maps $T_{L,s}, T_{R,s}: X^2 \to X^2$ by
\begin{align*}
    T_{L,s}(x,y) &= (sx, sy)\\
    T_{R,t}(x,y) &= (x, ty).
\end{align*}
Note that these maps are precisely the restrictions of the previously-defined $T_{L,s}$ and $T_{R,s}$ to the first two coordinates.
Since $S$ is commutative, the map $S^2 \times X^2 \to X^2$ given by
    \[(s,t)(x,y) \defeq T_{L,s}T_{R,t}(x,y) = (sx,sty)\]
makes $X^2$ an $S^2$-system. Note that $X^2$ is now both an $S$-system (via the diagonal action) and an $S^2$-system and that these actions commute.

\begin{lemma}
    \label{lemma_x3_to_x2_is_S2_factor_map}
    \lean{pi12FromCorner3ToCorner2IsFactor}
    \leanok{}
    The projection-to-the-first-two-coordinates map $\pi_{12}: X^3 \to X^2$ is a factor map of $S^2$-systems.
\end{lemma}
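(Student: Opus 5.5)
The plan is to check directly the three requirements in the definition of a factor map: continuity, surjectivity, and intertwining of the $S^2$-actions. Here $X^3$ carries the action $(s,t)(x,y,z) = (sx,sty,tz)$ from \cref{sec:action_s2_x3}, and $X^2$ carries the action $(s,t)(x,y) = (sx,sty)$.

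First, continuity. The map $\pi_{12}(x,y,z) = (x,y)$ is a coordinate projection between product spaces, so it is continuous in the product topology.

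Second, surjectivity. Since $X$ is non-empty (it is a compact Hausdorff space on which $S$ acts), we can fix any $z_0 \in X$. Then every $(x,y) \in X^2$ equals $\pi_{12}(x,y,z_0)$.

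Third, the intertwining property. This is where the definitions were set up to match. For $(s,t) \in S^2$ and $(x,y,z) \in X^3$ we compute
\[
\pi_{12}\big((s,t)(x,y,z)\big) = \pi_{12}(sx,sty,tz) = (sx,sty) = (s,t)(x,y) = (s,t)\,\pi_{12}(x,y,z).
\]
Equivalently, one can note that $\pi_{12} \circ T_{L,s} = T_{L,s} \circ \pi_{12}$ and $\pi_{12} \circ T_{R,t} = T_{R,t} \circ \pi_{12}$, where the right-hand maps are the overloaded versions on $X^2$, and then compose these two identities.

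There is no genuine obstacle here. The only points needing care are that both actions really are $S^2$-actions, which relies on commutativity of $S$ and was already observed when the actions were defined, and that $X \neq \emptyset$ for surjectivity.
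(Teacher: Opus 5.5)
Your proof is correct and matches the paper's: it likewise notes that $\pi_{12}$ is clearly a continuous surjection and checks the intertwining identity by the same one-line computation $(s,t)\pi_{12}(x,y,z) = (sx,sty) = \pi_{12}\big((s,t)(x,y,z)\big)$. Your caveat about $X \neq \emptyset$ is harmless but not needed, since if $X = \emptyset$ then $X^2 = \emptyset$ and surjectivity holds vacuously.
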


\begin{proof}
\leanok{}
    Since $\pi_{12}$ is clearly a continuous surjection, we need only show that for all $(s,t) \in S^2$, $(s,t) \pi_{12} = \pi_{12} (s,t)$ as maps from $X^3$ to $X^2$.  Let $(s,t) \in S^2$ and $(x,y,z) \in X^3$.  We see
    \[
        (s,t) \pi_{12} (x,y,z) = (s,t)(x,y) = (sx,sty) = \pi_{12} (sx, sty, tz) = \pi_{12} (s,t) (x,y,z),
    \]
    as desired.
\end{proof}

\begin{lemma}
    \label{lem_minimal_h_from_g}
    \lean{minimalSystemImpliesMinimalcornerSystem2}
    \leanok{}
    If $X$ is minimal as an $S$-system, then $X^2$ is minimal as an $S^2$-system.
\end{lemma}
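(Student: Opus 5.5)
We must show that for every $(x,y) \in X^2$, the orbit $S^2(x,y) = \{(sx, sty) \mid s,t \in S\}$ is dense in $X^2$. The plan is to use minimality of $X$ twice: once to move the second coordinate while the first stays fixed, and once along the diagonal action to move the first coordinate.

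Fix $(x,y) \in X^2$ and a non-empty open set $U \times V \subseteq X^2$; it suffices to find $(s,t) \in S^2$ with $(sx, sty) \in U \times V$.

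\emph{Step 1 (choose $s$).} Since $X$ is minimal, $Sx$ is dense, so there is $s \in S$ with $sx \in U$.

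\emph{Step 2 (choose $t$).} The point $sy \in X$ also has dense $S$-orbit, so there is $t \in S$ with $t(sy) \in V$. By commutativity, $t(sy) = (st)y$, and the action of $(s,t)$ gives exactly $(sx, sty)$. Hence $(s,t)(x,y) = (sx, sty) \in U \times V$.

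Since basic open rectangles form a base for the topology of $X^2$, the orbit $S^2(x,y)$ is dense. As $(x,y)$ was arbitrary, $X^2$ is a minimal $S^2$-system.

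The only point needing care is that the second coordinate is $sty$ rather than $ty$. Choosing $s$ first and then applying minimality to the point $sy$, instead of to $y$, handles this. Commutativity of $S$ is used only to ensure that the formula $(s,t)(x,y) = (sx,sty)$ defines an action, which was already established when the $S^2$-action on $X^2$ was introduced. No heavier machinery, such as uniform recurrence or ultrafilters, is needed.
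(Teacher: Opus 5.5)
Your proof is correct and is the same argument as the paper's: pick $s$ with $sx \in U$, then use minimality at the point $sy$ to pick $t$ with $t(sy) \in V$, so that $(s,t)(x,y) = (sx, sty) \in U \times V$. One small remark: your closing claim that commutativity enters only through the definition of the action slightly understates things, since you also use it in Step 2 to identify $t(sy) = (ts)y$ with $(st)y$.
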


\begin{proof}
\leanok{}
    We will show that every point in $X^2$ has a dense $S^2$-orbit.  Let $(x,y) \in X^2$ and let $U, V \subseteq X$ be nonempty, open sets. Since $X$ is a minimal $S$-system, there exists $s \in R_S(x,U)$ so that $(sx,sy) \in U \times X$.  Since $X$ is a minimal $S$-system, there exists $t \in R_S(sy,V)$ so that
    \[(s,t) (x,y) = (sx,tsy) \in U \times V,\]
    as was to be shown.
\end{proof}

\subsection{Action of $S^2$ on $X$}

Overloading the notation again, for $s, t \in S$, we define maps $T_{L,s}, T_{R,s}: X \to X$ by
\begin{align*}
    T_{L,s}z &= z\\
    T_{R,t}z &= tz.
\end{align*}
Observe that these maps are the restrictions of the maps in \cref{sec:action_s2_x3} to the third coordinate.
The map $S^2 \times X \to X$ given by
\[
    (s,t)z \defeq T_{L,s}T_{R,t}z = tz
\]
makes $X$ an $S^2$-system.  Note that $X$ is now both an $S$-system (via the original action) and an $S^2$-system and that these actions commute.

\begin{lemma}
\label{lemma_x3_to_x_is_S2_factor_map}
\lean{pi3FromCorner3ToCorner1IsFactor}
\leanok{}
    The projection-to-the-third-coordinate map $\pi_3: X^3 \to X$ is a factor map of $S^2$-systems.
\end{lemma}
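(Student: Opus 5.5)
The plan mirrors the proof of \cref{lemma_x3_to_x2_is_S2_factor_map}. There are two things to check: that $\pi_3$ is a continuous surjection, and that it intertwines the $S^2$-actions on $X^3$ and on $X$.

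Continuity of $\pi_3$ is immediate, since coordinate projections in the product topology are continuous. Surjectivity is also immediate: $X$ is non-empty, so for any $z \in X$ we may pick any $x, y \in X$ and get $\pi_3(x,y,z) = z$.

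It remains to show that for all $(s,t) \in S^2$ we have $(s,t)\pi_3 = \pi_3 (s,t)$ as maps $X^3 \to X$. Fix $(s,t) \in S^2$ and $(x,y,z) \in X^3$. Using the definition of the $S^2$-action on $X$ from this subsection and the action on $X^3$ from \cref{sec:action_s2_x3}, we compute
\[
    (s,t)\pi_3(x,y,z) = (s,t)z = tz = \pi_3(sx, sty, tz) = \pi_3\big((s,t)(x,y,z)\big),
\]
which is the required identity.

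There is no substantive obstacle here. The only point worth flagging is that both $S^2$-actions are genuinely actions, which relies on the commutativity of $S$. This was already noted when the actions were defined and is not needed again for the intertwining computation.
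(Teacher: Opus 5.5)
Your proposal is correct and follows the paper's proof: continuity and surjectivity of $\pi_3$ are immediate, and the intertwining identity comes from the same one-line computation $(s,t)\pi_3(x,y,z) = tz = \pi_3\big((s,t)(x,y,z)\big)$. Your remark that $X$ is non-empty is harmless but unnecessary, since any given $z \in X$ already witnesses non-emptiness.
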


\begin{proof}
\leanok{}
    Since $\pi_3$ is clearly a continuous surjection, we need only show that for all $(s,t) \in S^2$, $(s,t) \pi_{3} = \pi_{3} (s,t)$ as maps from $X^3$ to $X$. Let $(s,t) \in S^2$ and $(x,y,z) \in X^3$.  We see
    \[
        (s,t) \pi_{3} (x,y,z) = (s,t)z = tz = \pi_3 (sx, sty, tz) = \pi_{3} (s,t) (x,y,z),
    \]
    as desired.
\end{proof}

\begin{lemma}
    \label{lemma_prox_in_h_implies_in_g}
    \lean{cornerProxImpliesProx}
    \leanok{}
    If points $x, y \in X$ are $S^2$-proximal, then they are $S$-proximal.
\end{lemma}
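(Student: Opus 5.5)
The plan is to unwind the definitions directly. The $S^2$-action on $X$ is $(s,t)z = tz$, so the set of maps $\{(s,t) : X \to X \mid (s,t) \in S^2\}$ is exactly the set of maps $\{t : X \to X \mid t \in S\}$. Proximality of a pair depends only on which maps the acting semigroup realizes, and not on how those maps are indexed. So the statement should follow from a one-line comparison of orbits in $X^2$.

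Concretely, suppose $x, y \in X$ are $S^2$-proximal, and let $\alpha \subseteq X^2$ be an open set containing the diagonal. By definition there exists $(s,t) \in S^2$ with $(s,t)(x,y) \in \alpha$. Here $S^2$ acts on $X^2$ diagonally, through its action on $X$, as in the conventions of the preliminaries. Then
\[
    (s,t)(x,y) = \big((s,t)x, (s,t)y\big) = (tx, ty) = t(x,y),
\]
so $t \in S$ satisfies $t(x,y) \in \alpha$. Since $\alpha$ was arbitrary, $x$ and $y$ are $S$-proximal.

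There is no real obstacle. The only point needing care is to confirm that ``$S^2$-proximal'' refers to the diagonal action of $S^2$ on $X^2$ induced by the action $(s,t)z = tz$ of this subsection, and not to the $S^2$-action on $X^2$ from the previous subsection. With that reading the inclusion $S^2(x,y) \subseteq S(x,y)$ holds, in fact with equality. This inclusion is all that is needed.

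An alternative route is \cref{lemma_prox_pair_under_factor_map_is_prox}, applied with the ``factor'' being the identity map from $X$ viewed through the projection homomorphism $S^2 \to S$, $(s,t) \mapsto t$. However, that lemma is stated for a single semigroup, so the direct computation above is cleaner.
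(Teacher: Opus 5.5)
Your proof is correct and matches the paper's argument exactly: given $\alpha$, pick $(s,t) \in S^2$ with $((s,t)x,(s,t)y) \in \alpha$ and observe that this pair equals $(tx,ty) = t(x,y)$. Your reading of which $S^2$-action on $X^2$ is meant (the diagonal one induced by $(s,t)z = tz$) is also the reading the paper uses.
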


\begin{proof}
\leanok{}
    Suppose $x, y \in X$ are $S^2$-proximal.  Let $\alpha \subseteq X^2$ be an open set containing the diagonal.  There exists $(s,t) \in S^2$ such that $((s,t)x,(s,t)y) \in \alpha$.  But $((s,t)x,(s,t)y) = (tx,ty) = t(x,y)$, implying the $S$-orbit of $(x,y)$ under the diagonal action intersects $\alpha$. Since $\alpha$ was arbitrary, this shows that $x$ and $y$ are $S$-proximal.
\end{proof}

\subsection{Ultrafilters and the corner space}
 
In the previous subsections, we described $X^3$, $X^2$, and $X$ as $S^2$-systems.
These actions of $S^2$ allow us to describe a semigroup action of $\beta S^2$ on $X^3$, $X^2$, and $X$ as outlined in \cref{sec_ufs_and_systems}.
For $p \in \beta S^2$, we use juxtaposition to denote this action.
Thus, we write $p(x,y,z)$, $p(x,y)$, and $pz$ to denote the action of $p$ on $(x,y,z) \in X^3$, $(x,y) \in X^2$, and $z \in X$.

\begin{lemma}
\label{lemma_ultrafilter_action_on_X3}
\lean{ultraActionOnX3}
\leanok{}
    For all $p \in \beta S^2$ and all $(x,y,z) \in X^3$,
    \begin{align}
    \label{eqn_ultrafilter_action_on_X3}
        p (x,y,z) = (p (x,y), p z).
    \end{align}
\end{lemma}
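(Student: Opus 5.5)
The plan is to use the fact, recorded in \cref{sec_ufs_and_systems}, that factor maps of $S$-systems intertwine the action of every ultrafilter: if $\pi: X \to Y$ is a factor map, then $p \circ \pi = \pi \circ p$ for all $p \in \beta S$. We apply this with $S^2$ in place of $S$ to the two factor maps out of $X^3$ established above. These are the projection $\pi_{12}: X^3 \to X^2$, which is a factor map of $S^2$-systems by \cref{lemma_x3_to_x2_is_S2_factor_map}, and the projection $\pi_3: X^3 \to X$, which is a factor map of $S^2$-systems by \cref{lemma_x3_to_x_is_S2_factor_map}.

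Concretely, fix $p \in \beta S^2$ and $(x,y,z) \in X^3$, and write $p(x,y,z) = (a,b,c)$. Applying $\pi_{12}$ and using that it intertwines the $\beta S^2$-actions, we get $(a,b) = \pi_{12}\, p(x,y,z) = p\, \pi_{12}(x,y,z) = p(x,y)$. Likewise, applying $\pi_3$ gives $c = \pi_3\, p(x,y,z) = p\, \pi_3(x,y,z) = pz$. Together these give $(a,b,c) = (p(x,y), pz)$, which is \eqref{eqn_ultrafilter_action_on_X3}.

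The one point to be careful about is that the intertwining fact for ultrafilters is really being used, so it is worth recalling why it holds; this is the only mild obstacle. Let $u = p(x,y,z)$ be the limit of the pushforward of $p$ under $(s,t) \mapsto (s,t)(x,y,z)$. The factor map $\pi$ is continuous and commutes with each $(s,t)$. Hence the pushforward of $p$ under $(s,t) \mapsto (s,t)\pi(x,y,z)$ is the image under $\pi$ of the first pushforward. Continuity of $\pi$ then shows that this image ultrafilter converges to $\pi u$: for every open $V \ni \pi u$, the set $\pi^{-1}V$ is an open neighborhood of $u$, so its set of return times lies in $p$. By uniqueness of limits in Hausdorff spaces, $p\,\pi(x,y,z) = \pi u$.

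Alternatively, the statement can be verified directly from the characterization $pw = v \iff R(w,U) \in p$ for all open $U \ni v$. For a basic open box $U_{12} \times U_3$ around $(p(x,y), pz)$, the set of return times of $(x,y,z)$ to the box is $R((x,y),U_{12}) \cap R(z,U_3)$. Both of these sets lie in $p$, so their intersection lies in $p$ by the filter property, and this proves the claim.
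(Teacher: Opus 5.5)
Your proof is correct and is essentially the paper's argument: the paper applies the fact that the ultrafilter action commutes with factor maps to the $S^2$-factor maps $\pi_{12}: X^3 \to X^2$ and $\pi_3: X^3 \to X$, and reads off the two components. Your justification of the intertwining fact and your direct check via return-time sets of basic open boxes are both valid, though the paper simply cites the intertwining fact.
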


\begin{proof}
\leanok{}
    Since $\pi_{12}: X^3 \to X^2$ and $\pi_3: X^3 \to X$ are factor maps of $S^2$-systems (\cref{lemma_x3_to_x2_is_S2_factor_map} and \cref{lemma_x3_to_x_is_S2_factor_map}), and since the ultrafilter action commutes with factor maps, we get that 
    \[  \pi_{12} p (x,y,z) = p \pi_{12} (x,y,z) = p(x,y)
    \]
    and 
    \[
        \pi_3 p (x,y,z) = p \pi_3 (x,y,z) = pz,
    \]
    as claimed.
\end{proof}

\section{Corner space dynamics}
\label{sec_corner_space_dynamics}

In this section, we prove \cref{mainthm_RP_is_eq_relation}.  Throughout, $S$ is a commutative semigroup and $X$ is a minimal $S$-system.  Recall from \cref{sec_corner_space} how we treat $X^3$, $X^2$, and $X$ as $S^2$-systems and the ensuing actions of $\beta S^2$ on $X^3$, $X^2$, and $X$.

\subsection{$S^2$-uniformly recurrent points in $X^3$}

\begin{theorem}
\label{thm_xxx_is_trtl_unif_rec_v3}
\label{thm_xyy_is_trtl_unif_rec_v3}
\label{thm_xxy_is_trtl_unif_rec_v3}
\lean{xxxUniformlyRecurrent}
\lean{xyyUniformlyRecurrent}
\lean{xxyUniformlyRecurrent}
\leanok{}
    Let $S$ be a commutative semigroup and $X$ be a minimal $S$-system. For all $x, y \in X$, the points $(x, y, y)$ and $(x, x, y)$ are $S^2$-uniformly recurrent.
\end{theorem}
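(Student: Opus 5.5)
The plan is to use the ultrafilter facts from \cref{sec_ufs_and_systems}: produce a uniformly recurrent point whose $S^2$-orbit closure contains $(x,y,y)$. A uniformly recurrent point has a minimal orbit closure, so $(x,y,y)$ then lies in a minimal $S^2$-system and is itself $S^2$-uniformly recurrent (\cref{lemma_in_orb_clos_of_ur_implies_ur}). The case $(x,x,y)$ will follow from the case $(x,y,y)$ by a symmetry.

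\emph{Step 1 (the point $(x,y,y)$).} By \cref{lem_minimal_h_from_g}, $X^2$ is a minimal $S^2$-system. So, fixing a minimal left ideal $L \subseteq \beta S^2$, \cref{lemma_exists_idempotent_px_is_x} gives an idempotent $u \in L$ with $u(x,y) = (x,y)$. By \cref{lemma_ultrafilter_action_on_X3},
\[u(x,y,y) = (u(x,y), uy) = (x,y,uy).\]
Since $u$ is minimal, the point $(x,y,uy)$ is $S^2$-uniformly recurrent (Fact 4 in \cref{sec_ufs_and_systems}). Since $u$ is idempotent, $y$ and $uy$ are proximal for the $S^2$-action on $X$ (Fact 5), and hence $S$-proximal by \cref{lemma_prox_in_h_implies_in_g}.

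\emph{Step 2.} Apply \cref{lemma_S_orb_closure_contained_in_S2_orbit_closure_v2}, item 2, to the uniformly recurrent point $(x,y,uy)$. This gives
\[\{x\} \times \overline{S(y,uy)} \subseteq \overline{S^2(x,y,uy)}.\]
Because $X$ is a minimal $S$-system and $(y,uy)$ is $S$-proximal, \cref{theorem_proximal_and_minimal_implies_orbit_closure_contains_diagonal} gives $\Delta_X \subseteq \overline{S(y,uy)}$. In particular $(y,y) \in \overline{S(y,uy)}$, so $(x,y,y) \in \overline{S^2(x,y,uy)}$. This orbit closure is a minimal $S^2$-system, and \cref{lemma_in_orb_clos_of_ur_implies_ur} shows that $(x,y,y)$ is $S^2$-uniformly recurrent.

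\emph{Step 3 (the point $(x,x,y)$).} Let $\sigma: X^3 \to X^3$ be $\sigma(a,b,c) = (c,b,a)$ and $\tau: S^2 \to S^2$ be $\tau(s,t) = (t,s)$. Commutativity of $S$ gives
\[\sigma\big((s,t)(a,b,c)\big) = (tc, stb, sa) = \tau(s,t)\,\sigma(a,b,c).\]
Thus $\sigma$ is a homeomorphism conjugating the action through the automorphism $\tau$. Moreover, $\tau$ maps syndetic sets to syndetic sets by \cref{lemma_image_of_syndetic}, so $\sigma$ preserves $S^2$-uniform recurrence. Since $\sigma(x,x,y) = (y,x,x)$, which is uniformly recurrent by Steps 1--2, the point $(x,x,y)$ is uniformly recurrent as well.

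The main obstacle is Step 1: choosing the idempotent so that it fixes the first two coordinates simultaneously, which is exactly what minimality of $X^2$ under $S^2$ (\cref{lem_minimal_h_from_g}) provides. The remaining work is checking that the factor compatibility in \cref{lemma_ultrafilter_action_on_X3} really splits $u(x,y,y)$ as claimed, and that proximality transfers from the $S^2$-action to the diagonal $S$-action.
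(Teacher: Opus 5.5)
Your proof is correct. Steps 1--2 are essentially the paper's argument. The paper chooses the idempotent $p$ in a minimal left ideal of $\beta S^2$ with $p(x,x)=(x,x)$, rather than $u(x,y)=(x,y)$, and gets the uniformly recurrent point $(x,x,px)$. It then uses $S$-proximality of $x$ and $px$ together with item 2 of \cref{lemma_S_orb_closure_contained_in_S2_orbit_closure_v2} to place $(x,y,y)$ in $\overline{S^2(x,x,px)}$. As a side effect, all the points $(x,y,y)$ with $y\in X$ land in a single minimal set, but the mechanism is the same as yours.

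The genuine difference is in how you handle the point $(x,x,y)$. The paper uses no symmetry. It applies the first part with $x=y$ to see that $(y,y,y)$ is uniformly recurrent, and notes $(x,x)\in\overline{S(y,y)}$ by minimality. Item 1 of the same lifting lemma then gives $(x,x,y)\in\overline{S^2(y,y,y)}$.

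Your reversal $\sigma$ is a valid substitute. The intertwining $\sigma\circ g=\tau(g)\circ\sigma$ holds, with commutativity entering through $stb=tsb$. It gives $R(\sigma z,U)=\tau\, R(z,\sigma^{-1}U)$, which is syndetic by \cref{lemma_image_of_syndetic}.

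The paper's route needs nothing beyond lemmas it has already established. Yours needs this small extra verification, but it exposes a structural symmetry of the corner system; for example, under it item 1 of the lifting lemma follows from item 2.
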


\begin{proof}
\leanok{}
    Since $X$ is a minimal $S$-system, by \cref{lem_minimal_h_from_g}, we have $X^2$ is a minimal $S^2$-system.  Let $L$ be a minimal left ideal of $\beta S$. By \cref{lemma_exists_idempotent_px_is_x}, there exists a minimal, idempotent $p \in L$ such that $p(x,x) = (x,x)$.  We see by \cref{lemma_ultrafilter_action_on_X3} that $p(x,x,x) = (x,x,p x)$.
    
    Since $p$ is minimal, the point $p(x,x,x) = (x,x,px)$ is $S^2$-uniformly recurrent.  Since $p$ is idempotent, the points $(x,x,x)$ and $(x,x,px)$ are $S^2$-proximal.  By \cref{lemma_x3_to_x_is_S2_factor_map}, the map $\pi_3$ is a factor map of $S^2$-systems.  By \cref{lemma_prox_pair_under_factor_map_is_prox}, the points $x$ and $px$ are $S^2$-proximal.
    
    By \cref{lemma_prox_in_h_implies_in_g}, the points $x$ and $px$ are $S$-proximal.  It follows by \cref{theorem_proximal_and_minimal_implies_orbit_closure_contains_diagonal} that $(y,y) \in \overline{S(x,px)}$. By \cref{lemma_S_orb_closure_contained_in_S2_orbit_closure_v3}, $(x,y,y) \in \overline{S^2(x,x,px)}$.  By \cref{lemma_in_orb_clos_of_ur_implies_ur}, all points in $\overline{S^2(x,x,px)}$ are $S^2$-uniformly recurrent, whereby $(x,y,y)$ is $S^2$-uniformly recurrent, as desired.

    Since $X$ is $S$-minimal, $(x,x) \in \overline{S(y,y)}$. By the work in the previous paragraphs (with $x = y$), the point $(y,y,y)$ is $S^2$-uniformly recurrent. It follows by \cref{lemma_S_orb_closure_contained_in_S2_orbit_closure_v2} that $(x,x,y)$ belongs to $\overline{S^2(y,y,y)}$. By \cref{lemma_in_orb_clos_of_ur_implies_ur}, all points in $\overline{S^2(y,y,y)}$ are $S^2$-uniformly recurrent, whereby $(x,x,y)$ is $S^2$-uniformly recurrent, as desired.
\end{proof}

\subsection{$\RP$ is an equivalence relation}

The following theorem gives the main connection between dynamics on the corner space and the $\RP$ relation.

\begin{theorem}
\label{thm_rp_implies_same_minimal_component}
\label{theorem_cube_space_orbit_implies_rp}
\lean{xyInRPIffxyyInyyyOrbClosure}
\leanok{}
    Let $S$ be a commutative semigroup and $X$ be a minimal $S$-system.  Let $x, y \in X$.  The point $(x,y) \in \RP$ if and only if $(x,y,y) \in \overline{S^2(y,y,y)}$. The same statement holds with $\RP$ replaced by $\RP^-$.
\end{theorem}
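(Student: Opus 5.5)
The plan is to prove the statement for $\RP^-$, which is equivalent by \cref{mainthm_RP_is_backward_RP}. Throughout, write $K \defeq \overline{S^2(y,y,y)}$ and recall that the $S^2$-action on $X^3$ is $(s,t)(a,b,c) = (sa,stb,tc)$.

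\emph{The ``if'' direction} is a direct unwinding of definitions. Suppose $(x,y,y) \in K$, and fix open $U \ni x$, $V \ni y$, and $\alpha \supseteq \Delta$. By \cref{lemma_closed_nbhd_prod_in_diag}, choose an open $W \ni y$ with $W \times W \subseteq \alpha$; shrinking, we may also assume $W \subseteq V$. Since $U \times V \times W$ is an open neighbourhood of $(x,y,y)$, there is $(s,t) \in S^2$ with
\[(sy, sty, ty) \in U \times V \times W.\]
Then $(y,ty) \in W \times W \subseteq \alpha$ and $s(y,ty) = (sy,sty) \in U \times V$. This is exactly the defining condition for $(x,y) \in \RP^-$.

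\emph{The ``only if'' direction} I would attack by first enlarging $K$ using the uniform recurrence results. By \cref{thm_xxx_is_trtl_unif_rec_v3}, $(y,y,y)$ is $S^2$-uniformly recurrent. Part 2 of \cref{lemma_S_orb_closure_contained_in_S2_orbit_closure_v2} then gives $\{y\} \times \overline{S(y,y)} \subseteq K$, and by minimality $\overline{S(y,y)} = \Delta$, so $(y,b,b) \in K$ for every $b \in X$. Each $(y,b,b)$ is again $S^2$-uniformly recurrent (\cref{thm_xxx_is_trtl_unif_rec_v3}). Applying part 1 of the same lemma and orbit-closure transitivity (\cref{theorem_orbit_closure_transitivity}) gives
\[\overline{S(y,b)} \times \{b\} \subseteq K \quad \text{for all } b \in X.\]
It therefore suffices to show that $(x,y)$ lies in the closure of $\{(sy,sb) : s \in S,\ b \text{ near } y\}$, with $b$ arbitrarily close to $y$. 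Concretely, for every neighbourhood $W$ of $y$ and open $U_x \ni x$, $U_y \ni y$, I need $b \in W$ and $s \in S$ with $(sy,sb) \in U_x \times U_y$.

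This is where $(x,y) \in \RP^-$ enters, via \cref{lemma_frp_iff_nbhd_tran_int_contains_diagonal} in its $S^{-1}$ form: $\Delta \subseteq \overline{S^{-1}(U_x \times U_y)}$. In particular $(y,y)$ lies in that closure, so there exist $(a,b) \in W \times W$ and $s \in S$ with $s(a,b) \in U_x \times U_y$. The main obstacle is that the first coordinate is $a$, not exactly $y$.

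To fix this, I would run the same construction based at $a$ rather than $y$, and then transfer back to $y$. The transfer is the hard step, since orbit closures vary only semicontinuously in the base point. My first attempt would use \cref{lemma_ultrafilter_idempotent_lift}: choose a minimal idempotent $u \in \beta S^2$ fixing $(y,y)$ (via \cref{lemma_exists_idempotent_px_is_x} and \cref{lem_minimal_h_from_g}), and use an ultrafilter limit over the choices of $(a,b,s)$, as in \cref{lemma_rp_implies_ultra}, to pass from the approximating configurations to $(x,y,y)$ while keeping the first base coordinate pinned at $y$. Failing that, I would try to find $r \in S$ with $ra$ near $y$ and $rb$ close to $ra$, using $S$-invariance of $\RP$ (\cref{lemma_frp_basics}) to rechoose the configuration, before appealing to the displayed inclusion.
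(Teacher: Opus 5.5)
Your ``if'' direction is correct and is essentially the paper's argument. The ``only if'' direction, however, is not proved. The enlargement $\overline{S(y,b)}\times\{b\}\subseteq K$ is correct. But the reduction it yields is equivalent to the goal: if $(x,y,y)\in K$, then some $(s,t)\in S^2$ has $(sy,sty,ty)\in U_x\times U_y\times W$, and $b=ty$ is exactly the witness you ask for. So all of the content lies in the ``transfer'' step, where you offer only tentative strategies.

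The fallback cannot work as stated. Moving $(a,b)$ by some $r\in S$ only produces another configuration whose first coordinate is \emph{near} $y$, whereas your displayed inclusion needs a base point of the exact form $(y,b,b)$. Bridging ``near $y$'' and ``equal to $y$'' is the whole difficulty, and $S$-invariance of $\RP$ gives no control over it. The first strategy names relevant tools but not the mechanism. In particular, the idempotent you need is not one chosen in advance to fix $(y,y)$; it has to be manufactured from the approximating data.

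The missing idea is to stop insisting that the third coordinate be near $y$: let it float, and correct it at the end by proximality.
\begin{itemize}
\item Take $\family$ as in \cref{lemma_rp_implies_ultra} with $\lim\varphi\family=(w,w,x,y)$; your choice of $(a,b)$ near $(y,y)$ even allows $w=y$.
\item Fix a minimal left ideal $L\subseteq\beta S^2$. Since $X^2$ is a minimal $S^2$-system (\cref{lem_minimal_h_from_g}), encode each approximating pair as $(a,b)=p_{(a,b,s)}(w,w)$ with $p_{(a,b,s)}\in L$.
\item By continuity of $r\mapsto r(w,w)$, the limit $p\in L$ of the $p_{(a,b,s)}$ along $\family$ satisfies $p(w,w)=(w,w)$. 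So \cref{lemma_ultrafilter_idempotent_lift} yields $q\in L$ with $q(w,w)=(w,w)$ and $pq$ idempotent.
\item Then $(sa,sb,p_{(a,b,s)}qy)=T_{L,s}\,p_{(a,b,s)}q\,(w,w,y)$ lies exactly in $\overline{S^2(w,w,y)}$. This set is contained in $K$, because $(w,w,y)\in K$ by part 1 of \cref{lemma_S_orb_closure_contained_in_S2_orbit_closure_v3}. Passing to the limit gives $(x,y,pqy)\in K$.
\item The third coordinate is $pqy$ rather than $y$, but it is proximal to $y$ since $pq$ is idempotent. Hence $(y,y)\in\overline{S(y,pqy)}$ by \cref{lemma_prox_in_h_implies_in_g} and \cref{theorem_proximal_and_minimal_implies_orbit_closure_contains_diagonal}.
\item Since $(x,y,pqy)\in K$ is $S^2$-uniformly recurrent, part 2 of \cref{lemma_S_orb_closure_contained_in_S2_orbit_closure_v3} gives $(x,y,y)\in\overline{S^2(x,y,pqy)}\subseteq K$.
\end{itemize}
Without this encoding-plus-idempotent step, your proposal does not establish the ``only if'' direction.
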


\begin{proof}
\leanok{}
    By \cref{mainthm_RP_is_backward_RP}, $\RP = \RP^-$, so it suffices to prove the statement for $\RP^-$.

    Suppose $(x,y) \in \RP^-$. Consider the map $\varphi_1: X^2 \times S \to X^4$ given by
    \[\varphi_1(a,b,s) \defeq (a,b,sa,sb).\]
    By \cref{lemma_rp_implies_ultra}, there exists $w \in X$ and an ultrafilter $\family$ on $X^2 \times S$ such that
    \begin{align}
        \label{eqn_limit_of_varphi_one}
        \lim \varphi_1 \family = (w,w,x,y).
    \end{align}
    
    Let $L$ be a minimal left ideal of $\beta S^2$.  Since $X^2$ is a minimal $S^2$-system, for $(a,b,s) \in X^2 \times S$, there exists $p_{(a,b,s)} \in L$ such that $p_{(a,b,s)}(w,w) = (a,b)$.  Define $\varphi_2: X^2 \times S \to \beta S^2$ by
    \[\varphi_2 (a,b,s) \defeq p_{(a,b,s)},\]
    and note that $\varphi_2$ has image in $L$. Because $\varphi_2 \family$ is an ultrafilter on $\beta S^2$ and $\beta S^2$ is compact, there exists $p \in \beta S^2$ for which
    \begin{align}
        \label{eqn_limit_of_varphi_two}
        \lim \varphi_2 \family = p.
    \end{align}
    Since $L$ is closed and the image of $\varphi_2$ is in $L$, $p \in L$.
    
    Define the map $\varphi_3: \beta S^2 \to X^2$ given by
    \[\varphi_3q \defeq q(w,w).\]
    Consider the projection map onto the first two coordinates $\pi_{12}: X^4 \to X^2$ and the composition $\varphi_3 \circ \varphi_2: X^2 \times S \to X^2$.  On the one hand, by the definitions, \[\varphi_3 \circ \varphi_2 = \pi_{12} \circ \varphi_1.\]
    Therefore, we see from \eqref{eqn_limit_of_varphi_one}
    and the fact that $\pi_{12}$ is continuous that
    \[\lim (\varphi_3 \circ \varphi_2) \family = \lim (\pi_{12} \circ \varphi_1) \family = \pi_{12} \lim \varphi_1 \family = \pi_{12}(w,w,x,y) = (w,w).\]
    On the other hand, by the universal property of the Stone-\Cech{} compactification, $\varphi_3$ is continuous, so by \eqref{eqn_limit_of_varphi_two},
    \[\lim (\varphi_3 \circ \varphi_2) \family = \varphi_3 \lim \varphi_2 \family = \varphi_3 p = p(w,w).\]
    Therefore $p(w,w) = (w,w)$.

    By \cref{lemma_ultrafilter_idempotent_lift}, there exists $q \in L$ such that $q (w,w) = (w,w)$ and $pq$ is idempotent. Consider the map $\varphi_4: X^2 \times S \to X^3$ given by
    \[\varphi_4(a,b,s) \defeq (sa,sb,p_{(a,b,s)}qy) = \big((\pi_{34} \circ \varphi_1)(a,b,s),(\varphi_2 (a,b,s))qy \big).\]
    By \eqref{eqn_limit_of_varphi_one}, \eqref{eqn_limit_of_varphi_two}, and the continuity of the map $\beta S^2 \to X$ given by $r \mapsto rqy$,
    \begin{align}
        \label{eqn_limit_of_varphi_four}
        \lim \varphi_4 \family = (x,y,pqy).
    \end{align}
    
    We claim that the image of $\varphi_4$ is contained in $\overline{S^2 (w,w,y)}$.  Indeed, let $(a,b,s) \in X^2 \times S$, and note that by \cref{lemma_ultrafilter_action_on_X3} and the facts that $q(w,w) = (w,w)$ and $p_{(a,b,s)}(w,w) = (a,b)$,
    \[\varphi_4 (a,b,s) = (sa,sb,p_{(a,b,s)} q y) = T_{L,s} p_{(a,b,s)} q (w,w,y).\]
    Since $p_{(a,b)}q \in L \subseteq \beta S^2$, by the definition of the ultrafilter action, we have that
    \[p_{(a,b)} q (w,w,y) \in \overline{S^2 (w,w,y)}.\]
    Applying $T_{L,s}$, we see
    \[\varphi_4 (a,b,s) = T_{L,s} p_{(a,b)} q (w,w,y) \in T_{L,s} \overline{S^2 (w,w,y)} \subseteq \overline{S^2 (w,w,y)},\]
    as was claimed.

    Since $\overline{S^2 (w,w,y)}$ is closed and contains the image of $\varphi_4$, we have by \eqref{eqn_limit_of_varphi_four} that $(x,y,pqy) \in \overline{S^2 (w,w,y)}$.   
    Since $X$ is a minimal $S$-system, $(w,w) \in \overline{S(y,y)}$.  Since $(y,y,y)$ is $S^2$-uniformly recurrent (\cref{thm_xxx_is_trtl_unif_rec_v3}), by \cref{lemma_S_orb_closure_contained_in_S2_orbit_closure_v2}, we have that $(w,w,y) \in \overline{S^2(y,y,y)}$. By orbit-closure transitivity (\cref{theorem_orbit_closure_transitivity}), we get that
    \[(x, y, pq y) \in \overline{S^2(y,y,y)}.\]
    Note that since $(y,y,y)$ is $S^2$-uniformly recurrent (\cref{thm_xxx_is_trtl_unif_rec_v3}), by \cref{lemma_in_orb_clos_of_ur_implies_ur}, we have that $(x,y, pq y)$ is $S^2$-uniformly recurrent.
    
    Since $pq \in L \subseteq \beta S^2$ is idempotent, the point $pq y$ is $S^2$-proximal to $y$. By \cref{lemma_prox_in_h_implies_in_g}, the points $pq y$ and $y$ are $S$-proximal. It follows by \cref{theorem_proximal_and_minimal_implies_orbit_closure_contains_diagonal} that $(y,y) \in \overline{S(y,pqy)}$. Since $(x,y, pq y)$ is $S^2$-uniformly recurrent, by \cref{lemma_S_orb_closure_contained_in_S2_orbit_closure_v3}, $(x, y, y) \in \overline{S^2 (x, y, pq y)}$. By orbit-closure transitivity (\cref{theorem_orbit_closure_transitivity}), we have that
    \[(x, y, y) \in \overline{S^2 (y,y,y)},\]
    as was to be shown.\\

    Conversely, suppose $(x,y,y) \in \overline{S^2 (y,y,y)}$. To show that $(x,y) \in \RP^-$, we will show that for all open sets $\alpha \subseteq X^2$ containing the diagonal and all open $W \subseteq X^2$ containing $(x,y)$, $W \cap S \alpha \neq \emptyset$.
    
    Let $\alpha \subseteq X^2$ be open and contain the diagonal, and let $W \subseteq X^2$ be open and contain $(x,y)$. Consider the open set $W \times \{z \in X \mid (y, z) \in \alpha\}$ containing the point $(x,y,y)$. By assumption, $(x,y,y) \in \overline{S^2 (y,y,y)}$, so there exists $(s,t) \in S^2$ so that
    \[(s,t)(y,y,y) = (sy,sty,ty) \in W \times \{z \in X \mid (y, z) \in \alpha\}.\]
    That is, $(y,ty) \in \alpha$ and $s(y,ty) = (sy,sty) \in W$.  Therefore, $W \cap S \alpha \neq \emptyset$, as desired.
\end{proof}

\begin{theorem}
\label{theorem_rp_is_transitive}
\lean{minimalImpliesRPisTransitive}
\leanok{}
    Let $S$ be a commutative semigroup and $X$ be a minimal $S$-system. The regionally proximal relation, $\RP$, is transitive. The same statement holds with $\RP$ replaced by $\RP^-$.
\end{theorem}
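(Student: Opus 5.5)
Suppose $(x,y),(y,z) \in \RP$; we aim to show $(x,z) \in \RP$. By \cref{thm_rp_implies_same_minimal_component}, it suffices to show $(x,z,z) \in \overline{S^2(z,z,z)}$. The plan is to chain orbit closures in the corner space $X^3$ using \cref{theorem_orbit_closure_transitivity}, with \cref{lemma_S_orb_closure_contained_in_S2_orbit_closure_v2} and \cref{thm_xxx_is_trtl_unif_rec_v3} supplying the uniform recurrence needed to move coordinates.

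First, from $(y,z) \in \RP$, \cref{thm_rp_implies_same_minimal_component} gives $(y,z,z) \in \overline{S^2(z,z,z)}$. Next, from $(x,y) \in \RP$ we have $(x,y,y) \in \overline{S^2(y,y,y)}$. We want to replace the last coordinate $y$ by $z$ in this statement. Consider the map $\Psi_r \colon X^3 \to X^3$ that acts by $r \in \beta S$ (or by elements of $S$) in the third coordinate only. The natural route is to use that $(x,y,z)$ and $(y,y,z)$ are $S^2$-uniformly recurrent. Those are not of the forms in \cref{thm_xxx_is_trtl_unif_rec_v3}, so instead we argue as follows. Since $(y,y,z)$ is of the form $(x,x,y)$, it is $S^2$-uniformly recurrent. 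Likewise, $(y,z,z) \in \overline{S^2(z,z,z)}$ is uniformly recurrent by \cref{lemma_in_orb_clos_of_ur_implies_ur}.

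The key step is to show $(x,y,z) \in \overline{S^2(y,y,z)}$. Take an ultrafilter-limit description: from $(x,y,y) \in \overline{S^2(y,y,y)}$, choose $r \in \beta S^2$ with $r(y,y,y) = (x,y,y)$. Then $r(y,y)=(x,y)$ and $ry=y$ by \cref{lemma_ultrafilter_action_on_X3}. Apply the same $r$ to $(y,y,z)$ to get $(x,y,rz)$, which lies in $\overline{S^2(y,y,z)}$. To fix the third coordinate, pick $r$ in a minimal left ideal. This is possible since $(y,y,y)$ is uniformly recurrent, so its orbit closure is minimal and we may replace $r$ by $r u$ with $u$ a minimal idempotent fixing $(y,y,y)$. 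Then use \cref{lemma_ultrafilter_idempotent_lift}-style adjustments, mirroring the proof of \cref{thm_rp_implies_same_minimal_component}, so that $rz$ is proximal to $z$ in the $S$-sense. By \cref{theorem_proximal_and_minimal_implies_orbit_closure_contains_diagonal}, $(y,y)\in\overline{S(y,rz)}$ up to relabeling. Since $(x,y,rz)$ is uniformly recurrent, \cref{lemma_S_orb_closure_contained_in_S2_orbit_closure_v3} part 2 moves $(y,rz)$ to $(y,z)$, giving $(x,y,z) \in \overline{S^2(y,y,z)}$. Then act in the last two coordinates: $(y,z)$ is in $\overline{S(z,z)}$-type position, so apply the same lemma to replace the second coordinate appropriately and conclude $(x,z,z)\in\overline{S^2(y,z,z)}\subseteq\overline{S^2(z,z,z)}$.

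The main obstacle is this coordinate-replacement step: arranging an $r$ that simultaneously realizes $(x,y)$ and makes $rz$ proximal to $z$. I expect to reuse the idempotent-lifting trick from \cref{thm_rp_implies_same_minimal_component} essentially verbatim. Transitivity of $\RP^-$ then follows from \cref{mainthm_RP_is_backward_RP}.
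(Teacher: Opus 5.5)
There is a genuine gap, located at the step you yourself flag as the main obstacle, and a second one at the final step.

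In the key step, suppose you have an $r$ with $r(y,y)=(x,y)$ and $rz$ $S$-proximal to $z$. Part 2 of \cref{lemma_S_orb_closure_contained_in_S2_orbit_closure_v3} only lets you replace the last two coordinates of $(x,y,rz)$ by a point of $\overline{S(y,rz)}$. To reach $(x,y,z)$ you would need $(y,z)\in\overline{S(y,rz)}$, and proximality of $z$ and $rz$ does not give this. Your ``$(y,y)\in\overline{S(y,rz)}$'' would instead require $rz$ to be proximal to $y$, and it would only bring you back to $(x,y,y)$.

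Nor does \cref{lemma_ultrafilter_idempotent_lift} carry over from the proof of \cref{thm_rp_implies_same_minimal_component}. It needs $r$ to fix the point it is applied to, but your $r$ moves $(y,y)$ to $(x,y)$. Applying it to $ry=y$ does give $q$ with $rq$ idempotent, but then $rq(y,y)$ need no longer equal $(x,y)$.

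In the final step, passing from $(x,y,z)\in\overline{S^2(y,y,z)}$ to $(x,z,z)\in\overline{S^2(y,z,z)}$ would change the middle coordinate of both points at once, and no lemma provides that. Applying part 2 to $(x,y,z)$ would need $(z,z)\in\overline{S(y,z)}$, i.e.\ $y$ and $z$ proximal. This fails for regionally proximal pairs in distal systems. Also, $(y,z)\in\overline{S(z,z)}\subseteq\Delta$ would force $y=z$.

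The missing observation is that no ultrafilter work is needed. The points $(x,y,y)$ and $(y,z,z)$ are already of the form in \cref{thm_xxx_is_trtl_unif_rec_v3}, hence $S^2$-uniformly recurrent. By minimality, $(z,z)\in\overline{S(y,y)}$ and $(y,y)\in\overline{S(z,z)}$. Applying part 2 of \cref{lemma_S_orb_closure_contained_in_S2_orbit_closure_v3} to these two points therefore gives $(x,z,z)\in\overline{S^2(x,y,y)}$ and $(y,y,y)\in\overline{S^2(y,z,z)}$. Chain the four memberships
\[
(x,z,z)\in\overline{S^2(x,y,y)},\quad (x,y,y)\in\overline{S^2(y,y,y)},\quad (y,y,y)\in\overline{S^2(y,z,z)},\quad (y,z,z)\in\overline{S^2(z,z,z)}
\]
using \cref{theorem_orbit_closure_transitivity} to get $(x,z,z)\in\overline{S^2(z,z,z)}$. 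Then \cref{thm_rp_implies_same_minimal_component} gives $(x,z)\in\RP$. This is exactly the paper's proof, and the $\RP^-$ case follows from \cref{mainthm_RP_is_backward_RP} as you say.
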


\begin{proof}
\leanok{}
    Let $(x,y), (y,z) \in \RP$.  Since $(x,y) \in \RP$, by \cref{thm_rp_implies_same_minimal_component}, we have that $(x,y,y) \in \overline{S^2(y,y,y)}$.  Since $X$ is minimal, $(z,z) \in \overline{S(y,y)}$.  Since $(x,y,y)$ is $S^2$-uniformly recurrent (\cref{thm_xxx_is_trtl_unif_rec_v3}), by \cref{lemma_S_orb_closure_contained_in_S2_orbit_closure_v3}, $(x,z,z) \in \overline{S^2(x,y,y)}$.  Since $(y,z) \in \RP$, by \cref{thm_rp_implies_same_minimal_component}, we have that $(y,z,z) \in \overline{S^2(z,z,z)}$.  Since $X$ is minimal, $(y,y) \in \overline{S(z,z)}$.  Since $(y,z,z)$ is $S^2$-uniformly recurrent (\cref{thm_xxx_is_trtl_unif_rec_v3}), by \cref{lemma_S_orb_closure_contained_in_S2_orbit_closure_v3}, $(y,y,y) \in \overline{S^2(y,z,z)}$.
    
    Recapping the conclusions from the previous paragraph, we have
    \begin{align*}
        (y,z,z) &\in \overline{S^2(z,z,z)},\\
        (y,y,y) &\in \overline{S^2(y,z,z)},\\
        (x,y,y) &\in \overline{S^2(y,y,y)},\text{ and}\\
        (x,z,z) &\in \overline{S^2(x,y,y)}.\\
    \end{align*}
    By orbit-closure transitivity (\cref{theorem_orbit_closure_transitivity}), we see that
    \[(x,z,z) \in \overline{S^2(z,z,z)}.\]
    It follows by \cref{theorem_cube_space_orbit_implies_rp} that $(x,z) \in \RP$, as was to be shown.

    The same statement holds for $\RP^-$ because, by \cref{mainthm_RP_is_backward_RP}, we have $\RP = \RP^-$.
\end{proof}

\begin{theorem}[$\RP$ is an ICER]
\label{theorem_rp_is_ICER}
\lean{RPisICER}
\leanok{}
    Let $S$ be a commutative semigroup and $X$ be a minimal $S$-system. The regionally proximal relation, $\RP$, is an ICER. The same statement holds with $\RP$ replaced by $\RP^-$.
\end{theorem}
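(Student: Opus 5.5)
The plan is simply to collect the properties already established, since an ICER is by definition a closed, $S$-invariant equivalence relation on $X$. By \cref{lemma_frp_basics}, part 1, the relation $\RP$ is reflexive, symmetric, and closed. Because $S$ is commutative, the same lemma gives that $\RP$ is $S$-invariant under the diagonal action. Transitivity is exactly \cref{theorem_rp_is_transitive}, which holds because $S$ is commutative and $X$ is minimal. Putting these together, $\RP$ is a closed, $S$-invariant equivalence relation, that is, an ICER.

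For $\RP^-$ there are two routes, and I would take the shorter one. By \cref{mainthm_RP_is_backward_RP}, minimality together with commutativity gives $\RP = \RP^-$, so $\RP^-$ is an ICER as well. As a consistency check, one can also argue directly. \cref{lemma_frp_basics}, part 2, gives that $\RP^-$ is symmetric, closed, and $S$-invariant. It is reflexive provided $\overline{SX} = X$, and this holds because each $s \in S$ acts surjectively by \cref{lemma_minimal_comm_action_implies_surjective}. Transitivity of $\RP^-$ is the second clause of \cref{theorem_rp_is_transitive}.

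I do not expect any real obstacle here, since every ingredient is an earlier result. The only care needed is to note that $S$-invariance of $\RP$ uses commutativity, and that reflexivity of $\RP^-$ needs $\overline{SX}=X$; both hypotheses are guaranteed by the assumptions of the theorem.
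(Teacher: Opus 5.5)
Your proposal is correct and matches the paper's proof: it likewise uses \cref{mainthm_RP_is_backward_RP} to identify $\RP$ with $\RP^-$, takes reflexivity, symmetry, closedness, and $S$-invariance from \cref{lemma_frp_basics}, and gets transitivity from \cref{theorem_rp_is_transitive}. Your optional direct check for $\RP^-$ is also sound, since surjectivity of each $s$ gives $\overline{SX} = X$, though it is not needed.
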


\begin{proof}
\leanok{}
    By \cref{mainthm_RP_is_backward_RP}, we have $\RP = \RP^-$. The only property missing, then, from \cref{lemma_frp_basics} is transitivity, which is guaranteed by \cref{theorem_rp_is_transitive}.
\end{proof}

\subsection{Proof of \cref{mainthm_RP_is_eq_relation}}

Now we can prove \cref{mainthm_RP_is_eq_relation}.  Note that by \cref{mainthm_RP_is_backward_RP}, \cref{mainthm_RP_is_eq_relation} holds for $\RP^-$ in place of $\RP$.

\begin{proof}[Proof of \cref{mainthm_RP_is_eq_relation}]
\leanok{}
    We must show that $\RP = \seq$.  By \cref{theorem_rp_is_ICER}, $\RP$ is an ICER.  It follows from \cref{lemma_rp_characterization_of_equicont_icer} that $\RP$ is an equicontinuous ICER so that by the definition of $\seq$, $\seq \subseteq \RP$.  Since $\seq$ is an equicontinuous ICER (\cref{lemma_seq_is_equi_and_largest}), it follows from \cref{lemma_rp_characterization_of_equicont_icer} that $\RP \subseteq \seq$.  Therefore, $\RP = \seq$.

    By \cref{lemma_seq_is_equi_and_largest}, $X / \seq$ is the largest equicontinuous factor of $X$.  Since $\seq = \RP$, we have that $X / \RP$ is the largest equicontinuous factor of $X$.
\end{proof}

\subsection{Recurrence and invariance corollaries of \cref{mainthm_RP_is_eq_relation}}

In this section, we give two useful corollaries of \cref{mainthm_RP_is_eq_relation}.

Let $S$ be a commutative semigroup and $X$ be a minimal $S$-system. It is an easy consequence of \cref{lemma_frp_iff_nbhd_tran_int_contains_diagonal} that $(x,y) \in \RP$ if and only if for all open $U \subseteq X$ containing $x$, open $V \subseteq X$ containing $y$, and non-empty, open $W \subseteq X$, the set
\[R(U,W) \cap R(V,W) \neq \emptyset.\]
The theorem below offers a strengthening which we find an application for in our other paper \cite{blahodatna_detmold_glasscock_le_Delta_2026}.  A version of this theorem for abelian group actions can be found already in Veech's ``D'' relation in \cite{veech_1968}.

\begin{theorem}
\label{thm_delta_set_builder_prestep}
\lean{xyInRPImpliesSyndeticVisitTimeIntersection}
\leanok{}
    Let $S$ be a commutative semigroup, $X$ be a minimal $S$-system, and $x, y \in X$.  If $(x,y) \in \RP$, then for all non-empty, open $V, W \subseteq X$ with $y \in V$, we have that
    \[R(x,W) \cap R(V,W)\]
    is syndetic. The same statement holds with $\RP$ replaced by $\RP^-$.
\end{theorem}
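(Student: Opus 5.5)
The plan is to pass to the corner space and read off the required times from $S^2$-visit times of a single $S^2$-uniformly recurrent point.

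Let $(x,y) \in \RP$, let $V, W \subseteq X$ be non-empty and open with $y \in V$, and fix $w \in W$. By \cref{thm_rp_implies_same_minimal_component}, $(x,y,y) \in \overline{S^2(y,y,y)}$. By \cref{thm_xxx_is_trtl_unif_rec_v3}, $(y,y,y)$ is $S^2$-uniformly recurrent, so $M \defeq \overline{S^2(y,y,y)}$ is a minimal $S^2$-system. Since $(x,y,y) \in M$, minimality gives $M = \overline{S^2(x,y,y)}$.

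Next I locate a convenient point of $M$. By minimality of $X$ we have $(w,w) \in \overline{S(y,y)}$. Applying \cref{lemma_S_orb_closure_contained_in_S2_orbit_closure_v2} (part 1) to the uniformly recurrent point $(y,y,y)$ gives $(w,w,y) \in M$. Hence the open set $O \defeq W \times W \times V \subseteq X^3$ meets $M$.

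Now I translate visits to $O$ into the desired times. Suppose $(s,t) \in S^2$ satisfies $(s,t)(x,y,y) = (sx, sty, ty) \in O$. Put $v \defeq ty \in V$. Then $sx \in W$ and $sv = sty \in W$, so $s \in R(x,W) \cap R(V,W)$. Thus if $\pi_1 : S^2 \to S$ denotes the projection $(s,t) \mapsto s$, a surjective semigroup homomorphism, we obtain
\[
\pi_1\, R_{S^2}\big((x,y,y), O\big) \subseteq R(x,W) \cap R(V,W).
\]
By \cref{lemma_image_of_syndetic}, it suffices to show that $R_{S^2}((x,y,y),O)$ is syndetic in $S^2$.

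This last step is the one needing a small standard argument: in a minimal system, visit times of any point to any open set meeting the system are syndetic. Pick $m \in M \cap O$. By minimality of $M$, the sets $g^{-1}O \cap M$ for $g \in S^2$ cover $M$, and compactness gives a finite $F \subseteq S^2$ with $M \subseteq F^{-1}O$. Then for every $h \in S^2$ we have $h(x,y,y) \in M$, so some $f \in F$ satisfies $fh(x,y,y) \in O$. Hence $F^{-1} R_{S^2}((x,y,y),O) = S^2$, which is syndeticity.

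The statement for $\RP^-$ follows since $\RP = \RP^-$ by \cref{mainthm_RP_is_backward_RP}. The only real content is choosing the target point $(w,w,y)$ in $M$, so that the third coordinate records $v = ty \in V$ while the first two coordinates record the common visit of $x$ and $v$ to $W$ at time $s$.
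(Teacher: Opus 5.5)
Your proof is correct and follows essentially the same corner-space argument as the paper: \cref{thm_rp_implies_same_minimal_component}, the lifting lemma, syndetic visit times in a minimal $S^2$-orbit closure, and projection via \cref{lemma_image_of_syndetic}. The differences are only bookkeeping. The paper applies the corner-space characterization to $(y,x)$ via symmetry, counts visits of $(x,x,x)$ to $V \times W \times W$, projects to the second coordinate, and cites \cref{lemma_return_time_algebra_three}; you instead count visits of $(x,y,y)$ to $W \times W \times V$, read off the witness $ty \in V$ directly, and make explicit the compactness argument for syndetic visit times that the paper leaves implicit.
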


\begin{proof}
\leanok{}
    Let $V, W \subseteq X$ be non-empty, open with $y \in V$.  Let $z \in W$.  By \cref{lemma_frp_basics}, $\RP$ is symmetric, so $(y,x) \in \RP$.
    By \cref{thm_rp_implies_same_minimal_component}, $(y,x,x) \in \overline{S^2 (x,x,x)}$. Since $X$ is minimal, $(z,z) \in \overline{S(x,x)}$.  Since $(y,x,x)$ is $S^2$-uniformly recurrent (\cref{thm_xxx_is_trtl_unif_rec_v3}), by \cref{lemma_S_orb_closure_contained_in_S2_orbit_closure_v3}, $(y,z,z) \in \overline{S^2(y,x,x)}$.  By orbit-closure transitivity (\cref{theorem_orbit_closure_transitivity}), $(y,z,z) \in \overline{S^2 (x,x,x)}$.
    
    By \cref{thm_xxx_is_trtl_unif_rec_v3}, the point $(x,x,x)$ is $S^2$-uniformly recurrent.  Since $V \times W \times W$ is an open set containing $(y,z,z)$ and $(y,z,z) \in \overline{S^2 (x,x,x)}$, we get that $R_{S^2}((x,x,x), V \times W \times W)$ is syndetic. This is the set of pairs $(g,r) \in S^2$ such that $T_{L,g} T_{R,r} (x,x,x) \in V \times W \times W$ -- that is, such that $r \in R(x,W)$, $g \in R(x,V)$, $rg \in R(x,W)$.  For each such pair $(g,r) \in S^2$, we see by \cref{lemma_return_time_algebra_three} that
    \[r \in R(x,W) \cap (R(x,W) R(x,V)^{-1}) = R(x,W) \cap R(V,W).\]
    Applying the second-coordinate projection map $\pi_2: S^2 \to S$, a surjective semigroup homomorphism, to this set of pairs and using \cref{lemma_image_of_syndetic}, we see that the set $R(x,W) \cap R(V,W)$ is syndetic, as desired.

    The same statement holds for $\RP^-$ because, by \cref{mainthm_RP_is_backward_RP}, we have $\RP = \RP^-$.
\end{proof}

The assertion that $\RP \subseteq S^{-1} \RP$
is equivalent to the assertion that $\RP$ is $S$-invariant, which was shown in \cref{lemma_frp_basics} to hold for any $S$-system under the assumption that $S$ is commutative.  In \cref{thm_rp_strong_s_inv} below, we show that $\RP = S^{-1} \RP$ under the additional assumption of minimality.  This turns out to be an important ingredient in our proof of \cref{mainthm_kronecker_is_same_as_extension_kronecker}.  We derive \cref{thm_rp_strong_s_inv} as a corollary of \cref{mainthm_RP_is_eq_relation} and the following analogue for the equicontinuous structure relation.  

\begin{lemma}
\label{thm_seq_strong_s_inv}
\lean{equiStructureRelationIsStrongSInvariant}
\leanok{}
    If $S$ is a commutative semigroup and $X$ is a minimal $S$-system, then
    \[\seq = S^{-1} \seq.\]
\end{lemma}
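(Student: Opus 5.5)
The inclusion $\seq \subseteq S^{-1}\seq$ follows from $S$-invariance of $\seq$: $\seq$ is an ICER, so $s\seq \subseteq \seq$ for every $s \in S$, i.e.\ $\seq \subseteq s^{-1}\seq$. The content is the reverse inclusion: if $(sx, sy) \in \seq$ for some $s \in S$, then $(x,y) \in \seq$. I would argue this on the largest equicontinuous factor.

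Let $\pi: X \to Y \defeq X/\seq$ be the quotient map. By \cref{lemma_seq_is_equi_and_largest}, $Y$ is an equicontinuous $S$-system. It is minimal, being a factor of a minimal system. By \cref{lemma_minimal_comm_action_implies_surjective}, applied to $Y$ with $S$ commutative, $S$ acts on $Y$ by surjections. So $Y$ satisfies condition 1 of \cref{thm_main_equi_backward_equi_thm}, and that theorem says $S$ acts on $Y$ by homeomorphisms; in particular each $s: Y \to Y$ is injective.

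Now suppose $(x,y) \in S^{-1}\seq$, so there is $s \in S$ with $(sx, sy) \in \seq$. Then $\pi(sx) = \pi(sy)$, and since $\pi$ intertwines the actions, $s\pi x = s \pi y$ in $Y$. Injectivity of $s$ on $Y$ gives $\pi x = \pi y$, that is, $(x,y) \in \seq$. This proves $S^{-1}\seq \subseteq \seq$.

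The main obstacle is injectivity of each $s$ on the maximal equicontinuous factor. It is not automatic for semigroup actions, and it is why both commutativity (surjectivity via minimality) and the equicontinuous/backward-equicontinuous theorem (distality, hence injectivity) are needed. Once \cref{thm_main_equi_backward_equi_thm} is available, that step is immediate, and the rest is routine bookkeeping with $R_\pi = \seq$.
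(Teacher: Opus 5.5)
Your proof is correct and is essentially the paper's argument: both get injectivity of each $s$ on an equicontinuous factor from \cref{thm_main_equi_backward_equi_thm}, with surjectivity coming from \cref{lemma_minimal_comm_action_implies_surjective}, and then pull that injectivity back through the factor map. The only cosmetic difference is that you work directly with the largest equicontinuous factor $X/\seq$, whereas the paper argues contrapositively with an arbitrary equicontinuous factor separating $x$ and $y$.
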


\begin{proof}
\leanok{}
    The relation $\seq$ is $S$-invariant by its definition since an arbitrary intersection of $S$-invariant sets is $S$-invariant.  Therefore,
    \[\seq \subseteq S^{-1}\seq.\]
    
    To see the reverse inclusion, suppose $(x,y) \not\in \seq$ so that there exists an equicontinuous factor $\pi: X \to Y$ for which $\pi x \neq \pi y$.  We must show that for all $s \in S$, $s(x,y) \not\in \seq$.
    
    By \cref{lemma_minimal_comm_action_implies_surjective}, the semigroup $S$ acts by surjections on $X$.  Let $s \in S$.  By \cref{thm_main_equi_backward_equi_thm}, the map $s: Y \to Y$ is a homeomorphism, so $s \pi x \neq s \pi y$.  Therefore, $\pi s x \neq \pi s y$.  Since $\pi: X \to Y$ is an equicontinuous factor of $X$, we have that $(sx, sy) \neq \seq$, as desired. 
\end{proof}

\begin{theorem}
\label{thm_rp_strong_s_inv}
\lean{RPIsStrongSInvariant}
\leanok{}
    If $S$ is a commutative semigroup and $X$ is a minimal $S$-system, then
    \[\RP = S^{-1} \RP.\]
    The same statement holds with $\RP$ replaced by $\RP^-$.
\end{theorem}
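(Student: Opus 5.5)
This will be deduced directly from \cref{mainthm_RP_is_eq_relation} and \cref{thm_seq_strong_s_inv}. First, \cref{mainthm_RP_is_eq_relation} gives $\RP = \seq$ for a minimal action of a commutative semigroup. Next, \cref{thm_seq_strong_s_inv} gives $\seq = S^{-1}\seq$. Substituting $\RP$ for $\seq$ on both sides then yields
\[\RP = \seq = S^{-1}\seq = S^{-1}\RP,\]
which is the claim.

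For the statement about $\RP^-$, I would invoke \cref{mainthm_RP_is_backward_RP}, which gives $\RP = \RP^-$ under the same hypotheses. The identity $\RP^- = S^{-1}\RP^-$ then follows immediately from the one just proved.

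There is essentially no obstacle here, since all the work sits in the two cited results. The subtle inclusion is $S^{-1}\RP \subseteq \RP$, i.e., if $s(x,y) \in \RP$ then $(x,y) \in \RP$. This is exactly where \cref{thm_seq_strong_s_inv} uses that each $s$ acts by a homeomorphism on equicontinuous factors, via \cref{thm_main_equi_backward_equi_thm} together with surjectivity from \cref{lemma_minimal_comm_action_implies_surjective}. Proving this inclusion directly from the definition of $\RP$ would be awkward, because $s$ need not be injective or open on $X$. That is why I route the argument through $\seq$ rather than attempt a direct neighborhood argument.
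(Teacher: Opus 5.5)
Your proposal is correct and matches the paper's proof exactly: the paper combines $\RP = \seq$ from \cref{mainthm_RP_is_eq_relation} with $\seq = S^{-1}\seq$ from \cref{thm_seq_strong_s_inv}, and then transfers the result to $\RP^-$ via \cref{mainthm_RP_is_backward_RP}. You also locate the nontrivial inclusion correctly: $S^{-1}\RP \subseteq \RP$ is exactly what \cref{thm_seq_strong_s_inv} supplies, using the fact that each $s$ acts homeomorphically on equicontinuous factors.
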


\begin{proof}
\leanok{}
    By \cref{mainthm_RP_is_eq_relation}, we have $\RP = \seq$. By \cref{thm_seq_strong_s_inv}, we have $\seq = S^{-1} \seq$. The conclusion follows immediately.  The same statement holds for $\RP^-$ because, by \cref{mainthm_RP_is_backward_RP}, we have $\RP = \RP^-$.
\end{proof}

\section{From semigroup actions to group actions}
\label{sec_nat_extension}

In this section, we develop the machinery of natural extensions for commutative semigroup actions and prove Theorems \ref{mainthm_same_rp_for_comm_actions} and \ref{mainthm_kronecker_is_same_as_extension_kronecker}.

\subsection{Commuting actions and a proof of \cref{mainthm_same_rp_for_comm_actions}}
\label{sec_proof_of_comm_actions_thm}

In this subsection, we prove \cref{mainthm_same_rp_for_comm_actions}, that the sets of regionally proximal pairs under two minimal, commuting actions of commutative semigroups are the same. This result exists for group actions in \cite[Thm. 1.1]{shao_xu_2026}; the proof below follows their idea with necessary adaptations for semigroup actions.

\begin{proof}[Proof of \cref{mainthm_same_rp_for_comm_actions}]
\leanok{}
    By \cref{mainthm_RP_is_backward_RP}, we have $\RP_S = \RP_S^-$ and $\RP_T = \RP_T^-$, so it suffices to prove that $\RP_S^- = \RP_T^-$. By the symmetry of $S$ and $T$, it suffices to prove only that $\RP_S^- \subseteq \RP_T^-$.
    
    Suppose $(x,y) \in \RP_S^-$.  Let $\alpha \subseteq X^2$ be an open set containing the diagonal, and let $W \subseteq X^2$ be an open set containing $(x,y)$.  We must show that $T^{-1} W \cap \alpha \neq \emptyset$.
    Since $(x,y) \in \RP_S^-$, there exists $s \in S$ such that $s^{-1} W \cap \alpha \neq \emptyset$.  Let $U, V \subseteq X$ be open and non-empty sets such that $U \times V \subseteq s^{-1} W \cap \alpha$.
    Since $S$ acts on $X$ minimally, there exists $\ell \in S$ such that
    \[Y \defeq U \cap \ell^{-1} V \neq \emptyset.\]
    Since $T$ acts on $X$ minimally, there exists  $t \in T$ such that
    \[s^{-1} Y \cap t^{-1} Y \neq \emptyset.\]
    It follows by \cref{lemma_commuting_map_algebra} that
    \[
        Z \defeq Y \cap t^{-1} s Y \supseteq Y \cap s t^{-1} Y \neq \emptyset.
    \]
    We have
    \[
        Z \subseteq U \cap t^{-1} s U.
    \]
    Moreover, by \cref{lemma_commuting_map_algebra},
    \[
        \ell Z \subseteq \ell Y \cap \ell t^{-1} s Y \subseteq \ell Y \cap t^{-1} s \ell Y \subseteq V \cap t^{-1} s V.
    \]
    Therefore,
    \[\emptyset \neq Z \times \ell Z \subseteq (U \times V) \cap t^{-1} s (U \times V) \subseteq \alpha \cap t^{-1} W,\]
    as was to be shown.
\end{proof}

\subsection{The Grothendieck group}
\label{sec_gro_gp}

It is well known that a commutative, cancellative semigroup $S$ embeds into its group of quotients.
The Grothendieck group, $\gro(S)$, generalizes this by omitting the cancellativity requirement at the cost of downgrading the embedding to a semigroup homomorphism $S \to \gro(S)$.
Since $\gro(S)$ is typically considered for monoids rather than semigroups, we briefly summarize the construction and properties we need in the following theorem.

\begin{theorem}[The Grothendieck group]
\label{thm_gro_group_props}
\lean{groGroupHomIncidenceCondition}
\lean{imageOfSIsThickInGroGroup}
\leanok{}
    Let $S$ be a commutative semigroup.  There exists an abelian group $\gro(S)$ and a semigroup homomorphism $\igro: S \to \gro(S)$ with the following properties:
    \begin{enumerate}
        \item for all finite $F \subseteq \gro(S)$, there exists $s \in S$ such that $F \igro s \subseteq \igro S$;
        \item for all $s, t \in S$, $\igro s = \igro t$ if and only if there exists $r \in S$ such that $rs = rt$.
    \end{enumerate}
\end{theorem}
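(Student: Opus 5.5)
We construct $\gro(S)$ as the usual group of formal differences. Define a relation on $S \times S$ by $(a,b) \sim (c,d)$ if and only if there exists $r \in S$ with $rad = rbc$. Reflexivity and symmetry are clear. For transitivity, suppose $rad = rbc$ and $r'cf = r'de$. Multiplying the first by $r'f$ and the second by $rb$, and using commutativity, gives
\[(rr'd)\,af = rr'f\,bc = rr'b\,cf = (rr'd)\,be,\]
so $rr'd$ witnesses $(a,b) \sim (e,f)$. Define $\gro(S) \defeq (S \times S)/\!\sim$ with componentwise multiplication $[a,b][c,d] = [ac,bd]$. This is well defined, since a witness for one factor still witnesses the product. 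The operation is associative and commutative because $S$ is. Because $S$ need not have an identity, we must check the group axioms directly. All elements $[s,s]$ coincide, since $(s,s) \sim (t,t)$ via $st = ts$, and this common element $e$ is an identity: $[a,b][s,s] = [as,bs] = [a,b]$, witnessed by $r$ arbitrary, as $r\,as\,b = r\,bs\,a$. Inverses are $[a,b]^{-1} = [b,a]$, since $[a,b][b,a] = [ab,ba] = e$.

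For the homomorphism, we first try $\igro s \defeq [s\,s_0, s_0]$ for a fixed $s_0$. To avoid that choice, define instead $\igro s \defeq [s t, t]$ for any $t$. This is independent of $t$: we need $(st,t) \sim (st',t')$, that is, $r\,st\,t' = r\,t\,st'$, which holds. Then $\igro(s)\igro(s') = [st\,s't', tt'] = \igro(ss')$, taking $tt'$ as the auxiliary element.

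For property 2, $\igro s = \igro t$ means $[su,u] = [tu,u]$, i.e.\ there is $r'$ with $r'\,su\,u = r'\,u\,tu$. This gives $rs = rt$ with $r = r'u^2$. Conversely, if $rs = rt$, then $(su,u) \sim (tu,u)$ with witness $r$, because $r\,su\,u = r\,tu\,u$.

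For property 1, let $F = \{[a_i,b_i]\}$ be finite and put $s \defeq \prod_i b_i$. Then
\[[a_i,b_i]\,\igro s = [a_i b_i^{-1} \text{-style}] = [a_i\, s\, t,\; b_i t].\]
Since $s = b_i s_i'$, where $s_i'$ is the product of the other $b_j$, we get $[a_i s t, b_i t] = [a_i s_i' b_i t, b_i t] = \igro(a_i s_i')$. This uses the independence-of-$t$ observation with auxiliary element $b_i t$. If $F$ has a single element, take $s_i' $ absorbed by using $s = b_1 b_1$ instead, or simply always set $s \defeq \prod_i b_i \cdot c$ for a fixed $c$, so that the cofactor $s_i'$ always exists in $S$.

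The only delicate point is this lack of an identity in $S$. It forces the careful bookkeeping of auxiliary elements in the definition of $\igro$ and in property 1, and every representative manipulation must be checked without an empty product.
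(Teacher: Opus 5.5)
Your proposal is correct and matches the paper's proof step for step: the same relation ($(a,b)\sim(c,d)$ iff $rad = rbc$ for some $r$), the same identity $[s,s]$ and inverse $[b,a]$, the choice $s = \prod_i b_i$ for property 1, and the same witness manipulation $r = r'u^2$ for property 2. The only differences are cosmetic. You show that $\igro s = [st,t]$ does not depend on $t$ instead of fixing one auxiliary element. Your padding $s = c\prod_i b_i$ is a harmless safeguard against empty products, though not needed, since the cofactor always appears as $a_i s_i'$ and $a_i$ alone already lies in $S$.
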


\begin{proof}
\leanok{}
    Define a relation $\sim$ on $S^2$ by $(a,b) \sim (c,d)$ if there exists $s \in S$ such that $ads = bcs$.  It is a quick exercise to check that $\sim$ is a equivalence relation on $S^2$.
    
    Considering $S^2$ as a semigroup with coordinate-wise multiplication, the relation $\sim$ is multiplicatively invariant: if $(a,b) \sim (c,d)$, then $(s,t) (a,b) \sim (s,t) (c,d)$.
    The quotient $S^2 / \sim$ thus inherits a semigroup structure from $S^2$ via $[(a,b)] [(c,d)] = [(a,b) (c,d)] = [(ac,bd)]$.

    Define $\gro(S) = S^2 / \sim$.  We already have that it is a semigroup.  It is commutative because $S$ is commutative.  To see that it is a group, fix $r \in S$.  The class $[(r,r)]$ is the identity and every element has an inverse: indeed, $[(a,b)] [(b,a)] = [(r,r)]$.  Moreover, the map $\igro: S \to \gro(S)$ defined by $s \mapsto [(sr, r)]$ is a semigroup homomorphism.  We will check both of the properties asserted in the statement of the theorem.

    To see property 1., let $F = \{[(a_1, b_1)], \ldots, [(a_k, b_k)]\} \subseteq \gro(S)$ be finite. Let $s = b_1 \cdots b_k \in S$, and note that for all $i \in \{1, \ldots, k\}$,
    \[[(a_i, b_i)] \igro s = [(a_i b_1 \cdots b_k r, b_i r)] = [(a_i b_1 \cdots \hat b_i \cdots b_k r, r)] \in \igro(S),\]
    where $b_1 \cdots \hat b_i \cdots b_k$ denotes the product of all of the $b_j$'s except $b_i$.

    To see property 2., let $s, t \in S$.  Note that $\igro s = \igro t$ if and only if $(sr, r) \sim (tr, r)$, which happens if and only if there exists $p \in S$ such that $sr^2p = tr^2p$.
\end{proof}

\begin{theorem}
\label{thm_igro_same_implies_elts_act_same}
\lean{surjectiveActionsFactorThroughGroGroupHom}
\leanok{}
    Let $S$ be a commutative semigroup, let $X$ be an $S$-system, and let $s, t \in S$.  If $S$ acts by surjections and $\igro s = \igro t$, then for all $x \in X$, $sx = tx$.
\end{theorem}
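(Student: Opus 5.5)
By property 2 of \cref{thm_gro_group_props}, the hypothesis $\igro s = \igro t$ gives some $r \in S$ with $rs = rt$. The plan is to use surjectivity of the map $r: X \to X$ together with commutativity of $S$ to cancel $r$ from this identity when it acts on points of $X$. No dynamics is involved; only the algebraic fact from \cref{thm_gro_group_props} and the surjectivity hypothesis are needed.

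Concretely, fix $x \in X$. Since $S$ acts by surjections, the map $r: X \to X$ is onto, so we may write $x = ry$ for some $y \in X$. Then, using that the action is a semigroup homomorphism and that $S$ is commutative,
\[
sx = s(ry) = (sr)y = (rs)y = (rt)y = (tr)y = t(ry) = tx.
\]
Since $x$ was arbitrary, $s$ and $t$ induce the same map on $X$.

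There is essentially no obstacle here. The only point requiring care is the order of cancellation. The relation $rs = rt$ places $r$ on the side that acts \emph{last} on a point, so it cannot be cancelled via surjectivity directly. Commutativity lets us rewrite $rs = sr$ and $rt = tr$, so that $r$ acts first, and surjectivity of $r$ then absorbs it. Without commutativity one would instead need injectivity of $r$, which is not available.
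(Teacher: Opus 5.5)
Your proof is correct and essentially identical to the paper's. You take $r$ with $rs = rt$ from property 2 of \cref{thm_gro_group_props}, write $x = ry$ using surjectivity of $r$, and use commutativity to get $sx = s(ry) = (rs)y = (rt)y = t(ry) = tx$.
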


\begin{proof}
\leanok{}
    For $s \in S$, denote by $f_s$ the element of $C(X,X)$ given by $s$.  Suppose $S$ acts by surjections and $\igro s = \igro t$. By \cref{thm_gro_group_props}, we have that there exists $r \in S$ such that $rs = rt$.  We wish to show that $f_s = f_t$.  

    Let $x \in X$. Since $S$ acts by surjections, there exists $y \in X$ such that $ry = x$.  Then
    \[tx = try = sry = sx.\]
    Since $x \in X$ was arbitrary, we see $f_s = f_t$.
\end{proof}

\begin{theorem}
\label{thm_extend_homeo_action_to_gr_group}
\lean{homDSofGroGroupDSIsOriginalSystem}
\leanok{}
    Let $S$ be a commutative semigroup and $X$ be an $S$-system.  If $S$ acts on $X$ by homeomorphisms, then there exists an action of $\gro(S)$ on $X$ by homeomorphisms such that the original $S$-action is the action via the homomorphism $\igro: S \to \gro(S)$ (as described at the top of \cref{sec_dynamics}).
\end{theorem}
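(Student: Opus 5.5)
We will define the action of $\gro(S)$ on $X$ directly from the construction of $\gro(S)$ as $S^2/\sim$ in the proof of \cref{thm_gro_group_props}. For $s \in S$, write $f_s \colon X \to X$ for the homeomorphism given by $s$. For a class $[(a,b)] \in \gro(S)$ we set
\[
    [(a,b)]\, x \defeq f_a f_b^{-1} x .
\]
Since $S$ is commutative, the maps $f_a$ and $f_b$ commute. Hence $f_b^{-1}$ commutes with every $f_c$, and all the maps $f_c$ and $f_c^{-1}$ lie in a single abelian subgroup of the homeomorphism group of $X$.

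The first step is to check that this is well defined. Suppose $(a,b) \sim (c,d)$, so there exists $s \in S$ with $ads = bcs$. Then $f_a f_d f_s = f_b f_c f_s$. Because $f_s$ is invertible we may cancel it, giving $f_a f_d = f_b f_c$, and rearranging in the abelian group gives $f_a f_b^{-1} = f_c f_d^{-1}$. This is the only place where invertibility is genuinely used; in the merely surjective setting one would instead argue as in \cref{thm_igro_same_implies_elts_act_same}, but here cancellation is immediate.

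Next we check that the formula defines a group action by homeomorphisms. We have
\[
    [(a,b)][(c,d)]x = [(ac,bd)]x = f_a f_c (f_b f_d)^{-1}x = f_a f_b^{-1} f_c f_d^{-1} x,
\]
using commutativity of the $f$'s again. Each map $f_a f_b^{-1}$ is a composition of homeomorphisms, so it is a homeomorphism. The identity $[(r,r)]$ acts as $f_r f_r^{-1} = \mathrm{id}$, as it must.

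Finally we verify compatibility with $\igro$. For $s \in S$ we have $\igro s = [(sr,r)]$, which acts by
\[
    f_{sr} f_r^{-1} = f_s f_r f_r^{-1} = f_s ,
\]
so the $S$-action induced through $\igro$ is the original one. There is no real obstacle in any of this. The one point needing care is the well-definedness step, specifically cancelling the witness $s$, which requires $f_s$ to be injective.
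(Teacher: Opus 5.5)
Your proof is correct and is essentially the paper's argument. The paper also lets $g = \igro(t)\igro(s)^{-1}$ act by $f_t \circ f_s^{-1}$, but it phrases everything through the abstract properties in \cref{thm_gro_group_props} and gets well-definedness from \cref{thm_igro_same_implies_elts_act_same}, whereas you work in the concrete model $S^2/{\sim}$ and cancel the witness directly; you also explicitly check the compatibility $f_{sr} f_r^{-1} = f_s$ with $\igro$, which the paper leaves implicit.

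One small correction to your closing remark: with $f_s$ written on the right, cancelling it uses surjectivity rather than injectivity (by commutativity either one suffices, exactly as in \cref{thm_igro_same_implies_elts_act_same}). Invertibility is of course also used simply to define $f_b^{-1}$.
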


\begin{proof}
\leanok{}
    For $s \in S$, denote by $f_s$ the homeomorphism of $X$ given by the action of $s$ on $X$. Note that since $S$ is commutative and $f_s$ is a homeomorphism, for all $s, t \in S$, $f_s \circ f_t = f_t \circ f_s$ and $f_s^{-1} \circ f_t^{-1} = f_t^{-1} \circ f_s^{-1}$.
    
    We wish to define an action of $\gro(S)$ on $X$ by homeomorphisms. Let $g \in \gro(S)$. By \cref{thm_gro_group_props}, there exists $s, t \in S$ such that $\igro(s) g = \igro(t)$, that is, $g = \igro(t) \igro(s)^{-1}$. Define $f_g: X \to X$ by $f_g = f_t \circ f_s^{-1}$.  To see that this is well-defined, suppose that $s_1, s_2, t_1, t_2 \in S$ are such that
    \[g = \igro(t_1) \igro(s_1)^{-1} = \igro(t_2) \igro(s_2)^{-1}.\]
    We have that $\igro(t_1 s_2) = \igro(t_2 s_1)$, whence by \cref{thm_igro_same_implies_elts_act_same},
    \[f_{t_1} \circ f_{s_2} = f_{t_1 s_2} = f_{t_2 s_1} = f_{t_2} \circ f_{s_1}.\]
    Composing with $f_{s_1}^{-1} \circ f_{s_2}^{-1}$, we see
    \[f_{t_1} \circ f_{s_1}^{-1} = f_{t_1} \circ f_{s_2} \circ f_{s_1}^{-1} \circ f_{s_2}^{-1} = f_{t_2} \circ f_{s_1} \circ f_{s_1}^{-1} \circ f_{s_2}^{-1} = f_{t_2} \circ f_{s_2}^{-1},\]
    as was to be shown.

    We claim that $\gro(S) \times X \to X$ given by $(g,x) \mapsto f_g x$ yields an action of $\gro(S)$ on $X$ by homeomorphisms.  Each $f_g$ is a homeomorphism by its definition. To see that it is an action, we need only check that for all $g, h \in \gro(S)$, $f_g \circ f_h = f_{gh}$.  Write $g = \igro(t_1) \igro(s_1)^{-1}$ and $h = \igro(t_2) \igro(s_2)^{-1}$.  Then $gh = \igro(t_1 t_2) \igro(s_1 s_2)^{-1}$, so we see
    \[f_{gh} = f_{t_1 t_2} \circ f_{s_1s_2}^{-1} = f_{t_1} \circ f_{t_2} \circ f_{s_1}^{-1} \circ f_{s_2}^{-1} = f_{t_1} \circ f_{s_1}^{-1} \circ f_{t_2} \circ f_{s_2}^{-1} = f_g \circ f_{h},\]
    as was to be shown.
\end{proof}

\begin{corollary}
\label{cor_S_and_GS_actions}
\lean{extendedActionRPisActionRP}
\leanok{}
    Let $S$ be a commutative semigroup, and let $X$ be a minimal $S$-system.  Suppose $S$ acts by homeomorphisms, and consider $X$ as a $\gro(S)$-system by \cref{thm_extend_homeo_action_to_gr_group}. We have $\RP_S = \RP_{\gro(S)}$.
\end{corollary}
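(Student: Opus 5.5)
The plan is to obtain $\RP_S = \RP_{\gro(S)}$ by showing that the actions of $S$ and $\gro(S)$ on $X$ commute and are both minimal, and then to invoke \cref{mainthm_same_rp_for_comm_actions}. That theorem applies to commutative semigroups $S$ and $T$ acting minimally on the same space with commuting actions. Here $T = \gro(S)$ is an abelian group, hence a commutative semigroup.

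First I check that the actions commute. By \cref{thm_extend_homeo_action_to_gr_group}, each $g \in \gro(S)$ acts by $f_g = f_t \circ f_s^{-1}$ for some $s,t \in S$. The maps $f_r$ for $r \in S$ commute with $f_t$ because $S$ is commutative. They also commute with $f_s^{-1}$: from $f_r f_s = f_s f_r$ we get $f_s^{-1} f_r = f_r f_s^{-1}$. Hence $f_r f_g = f_g f_r$ for all $r \in S$ and $g \in \gro(S)$. Equally, since the $S$-action is the $\gro(S)$-action pulled back through $\igro$, commutativity follows from $\gro(S)$ being abelian.

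Next I check minimality of the $\gro(S)$-action. For each $x$, the $S$-orbit $Sx = \{f_{\igro s}x \mid s \in S\}$ is contained in the $\gro(S)$-orbit $\gro(S)x$. The $S$-orbit is dense by minimality of the $S$-system, so $\gro(S)x$ is dense too, and $X$ is a minimal $\gro(S)$-system.

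\cref{mainthm_same_rp_for_comm_actions} then gives $\RP_S = \RP_{\gro(S)}$ directly. There is no substantial obstacle here. The only care needed is that the $\gro(S)$-action really restricts to the original $S$-action via $\igro$, which is the content of \cref{thm_extend_homeo_action_to_gr_group}, and that \cref{mainthm_same_rp_for_comm_actions} allows two different acting semigroups on the same space, which it does. As a sanity check, one inclusion is also immediate by hand: $\RP_S \subseteq \RP_{\gro(S)}$ because $\bigcup_{s\in S} s^{-1}\alpha \subseteq \bigcup_{g \in \gro(S)} g^{-1}\alpha$. The cited theorem supplies the nontrivial reverse inclusion.
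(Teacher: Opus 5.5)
Your proposal is correct and follows the paper's proof, which simply applies \cref{mainthm_same_rp_for_comm_actions} after noting that $S$ and $\gro(S)$ are commutative semigroups with commuting, minimal actions. Your checks of commutation (via $f_s^{-1} f_r = f_r f_s^{-1}$, or via $\gro(S)$ being abelian) and of minimality (via $Sx \subseteq \gro(S)x$) just spell out what the paper leaves implicit.
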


\begin{proof}
\leanok{}
    Apply \cref{mainthm_same_rp_for_comm_actions}, noting that $S$ and $\gro(S)$ are commutative semigroups whose actions commute and are minimal.
\end{proof}

\subsection{The natural extension: a proof of \cref{mainthm_nat_ext}}
\label{sec_nat_ext}

In this section, we generalize the construction of natural extensions in topological dynamics to treat $S$-systems for commutative semigroups $S$ that act by surjections.
More precisely, \cref{lemma_nat_ext_props} details the construction of such an extension and \cref{lemma_nat_ext_univeral_prop} shows that it satisfies a universal property.
To see why the surjectivity assumption is necessary, note that if $S$ acts on $W$ by homeomorphisms and $\pi: W \to X$ is a factor map, then for any $s \in S$, the map $\pi \circ s = s \circ \pi$ will be surjective, and hence it must be that $s : X \to X$ is surjective.

Let $\gro(S)$ be the Grothendieck group of $S$ from \cref{thm_gro_group_props}, and denote its identity element by $e_{\gro(S)}$.  The space of all functions $\gro(S) \to X$, that is, the product space $X^{\gro(S)}$, is a compact Hausdorff space with the product topology.  The group $\gro(S)$ acts on $X^{\gro(S)}$ by homeomorphisms:
\[
    \forall g, h \in \gro(S), \ \forall \varphi \in X^{\gro(S)}, \ (g \varphi) (h) = \varphi(hg).
\]
We treat $X^{\gro(S)}$ as an $S$-system via the homomorphism $\igro: S \to \gro(S)$, as described at the top of \cref{sec_dynamics}.

\begin{theorem}[The natural extension]
\label{lemma_nat_ext_props}
\lean{natExtSystem}
\lean{natExtFactorMapIsFactorMap}
\leanok{}
Let $S$ be a commutative semigroup and $X$ be an $S$-system on which $S$ acts by surjections. The set
\[
    W \defeq \big\{ \varphi: \gro(S) \to X \mid \forall g \in \gro(S), \ \forall s \in S, \ \varphi((\igro s)g) = s (\varphi (g)) \big\} \subseteq X^{\gro(S)}
\]
is a non-empty, closed, $\gro(S)$-invariant subset of $X^{\gro(S)}$. Thus, $W$ is a $\gro(S)$-system, and hence, by way of the homomorphism $\igro: S \to \gro(S)$, an $S$-system. The map $\pi: W \to X$ defined by
\[\pi (\varphi) \defeq \varphi(e_{\gro(S)})\]
is a factor map of $S$-systems.
\end{theorem}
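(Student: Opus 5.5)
The plan is to verify the four claims in order: $W$ is closed, $W$ is $\gro(S)$-invariant, $W$ is non-empty, and $\pi$ is a factor map. Closedness and invariance are routine; the real work is non-emptiness, which I would obtain by a compactness argument.

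\emph{Closedness and invariance.} For fixed $g \in \gro(S)$ and $s \in S$, the set
\[
W_{g,s} \defeq \{\varphi \in X^{\gro(S)} \mid \varphi((\igro s)g) = s(\varphi(g))\}
\]
is the set where two continuous maps $X^{\gro(S)} \to X$ agree: the coordinate evaluation at $(\igro s)g$, and the continuous map $s$ composed with the evaluation at $g$. Since $X$ is Hausdorff, each $W_{g,s}$ is closed, and $W = \bigcap_{g,s} W_{g,s}$ is therefore closed. For invariance, take $h \in \gro(S)$ and $\varphi \in W$. Then
\[
(h\varphi)((\igro s)g) = \varphi((\igro s)gh) = s(\varphi(gh)) = s((h\varphi)(g)),
\]
so $h\varphi \in W$. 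Hence $W$ is a $\gro(S)$-system, acting by homeomorphisms as the restriction of the shift, and it becomes an $S$-system through $\igro$.

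\emph{Non-emptiness (main step).} Non-emptiness is where surjectivity is used; I expect it to be the main obstacle. By compactness of $X^{\gro(S)}$, it suffices to show that every finite intersection $W_{g_1,s_1} \cap \cdots \cap W_{g_k,s_k}$ is non-empty. Let $F$ be the finite set of all $g_i$ and $(\igro s_i) g_i$. By \cref{thm_gro_group_props} part 1, there is $r \in S$ with $F\,\igro r \subseteq \igro S$. For each $f \in F$, choose $a_f \in S$ with $\igro a_f = f\,\igro r$.

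Fix $x_0 \in X$. Using surjectivity of $r$, choose $y \in X$ with $ry = x_0$ (one could instead take $y$ arbitrary). Define $\varphi(f) \defeq a_f\, y$ for $f \in F$, and define $\varphi$ arbitrarily off $F$.

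To check the constraint for $(g_i,s_i)$, write $f = g_i$ and $f' = (\igro s_i)g_i$. Then
\[
\igro(a_{f'}) = \igro s_i\, g_i\, \igro r = \igro(s_i a_f).
\]
By \cref{thm_igro_same_implies_elts_act_same}, which applies because $S$ acts by surjections, we get $a_{f'} y = s_i a_f y$. That is, $\varphi(f') = s_i\varphi(f)$, as required.

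The delicate point is the well-definedness of $a_f$. Although the element $a_f$ is chosen, its action on $X$ is determined by $\igro a_f$, again by \cref{thm_igro_same_implies_elts_act_same}, so the value $\varphi(f)$ does not depend on that choice. This is exactly the place where non-cancellativity is absorbed.

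\emph{Factor map.} The map $\pi$ is a coordinate projection, so it is continuous. It intertwines the actions since
\[
\pi(\igro s\,\varphi) = \varphi(\igro s) = s\varphi(e_{\gro(S)}) = s\pi(\varphi).
\]
For surjectivity, given $x \in X$, the same compactness argument applies with the extra closed condition $\varphi(e) = x$ added. Include $e$ in $F$, and choose $y$ with $r y = x$, using $a_e = r$ since $\igro a_e = \igro r$. Then $\varphi(e) = ry = x$, so this finite intersection is also non-empty, and we get $\varphi \in W$ with $\pi(\varphi) = x$.
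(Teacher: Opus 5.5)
Your proposal is correct and follows essentially the same route as the paper. The paper also proves non-emptiness and surjectivity of $\pi$ by a finite-intersection-property argument, uses property 1 of \cref{thm_gro_group_props} to fit the finitely many relevant group elements into one translate $\igro(S)h$ (your $h=(\igro r)^{-1}$), sets $\varphi(\igro(t)h)=ty$ with $sy=x$, and invokes \cref{thm_igro_same_implies_elts_act_same} to reconcile different representatives. The only cosmetic difference is your indexing of the closed sets by single pairs $(g,s)$: this lets you define $\varphi$ on a finite set, whereas the paper imposes all $s\in S$ at once for $g$ in a finite $F$ and so defines $\varphi$ on the whole cone $\igro(S)h$.
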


\begin{proof}
\leanok{}
    That $W$ is closed and $\pi$ is continuous follow easily from the definition of the product topology on $X^{\gro(S)}$.

    To see that $W$ is $\gro(S)$-invariant, for $\varphi \in W$ and $h \in \gro(S)$, we must show that $h \varphi \in W$.  We must check that for all $g \in \gro(S)$ and $s \in S$, we have
    \[
        (h \varphi) ((\igro s)g) = s \big((h \varphi) (g) \big).
    \]
    Let $g \in \gro(S)$ and $s \in S$.  We see
    \[
        (h \varphi) ((\igro s)g) = \varphi((\igro s)gh) = s \big(\varphi(gh)\big) = s \big((h \varphi) (g) \big),
    \]
    as desired.

    Now we show $W$ is non-empty and $\pi$ is surjective. For $F \subseteq \gro(S)$ finite, define
    \[
        W_F = \big\{ \varphi: \gro(S) \to X \mid \forall g \in F, \ \forall s \in S, \ \varphi((\igro s)g) = s \big(\varphi (g)\big) \big\} \subseteq X^{\gro(S)}.
    \]
    It is clear that $W = \bigcap_F W_F$, where the intersection is taken over all finite sets $F \subseteq \gro(S)$.  We will show that for all $x \in X$ and all finite $F \subseteq \gro(S)$, there exists $\varphi \in W_F$ so that $\varphi(e_{\gro(S)}) = x$.  This will show that $W$ is non-empty and that $\pi$ is surjective. 
    Indeed, for any $x \in X$,
    \[\big\{ W_F \cap \{\varphi \in X^{\gro(S)} \mid \varphi(e_{\gro(S)}) = x\} \mid F \subseteq \gro(S) \text{ finite} \big\}\]
    will be a family of non-empty, closed sets with the finite intersection property.  By compactness, there will be some function in every member of the family, and that function will belong to $W$ and map to $x$ under $\pi$.

    Let $F \subseteq \gro(S)$ be finite and $x \in X$.  Without loss of generality, we can assume $e_{\gro(S)} \in F$. By \cref{thm_gro_group_props}, there exists $h \in \gro(S)$ such that $F \subseteq \igro(S) h$.  In particular, there exists $s \in S$ such that $\igro(s) h = e_{\gro(S)}$.  By our standing assumptions, $s: X \to X$ is a surjection, so there exists $y \in X$ such that $sy = x$.

    Define $\varphi : \gro(S) \to X$ by
    \[
        \varphi(g) = \begin{cases} ty & \text{ if } \exists t \in S, \ g = \igro(t) h \\ y & \text{ else } \end{cases}.
    \]
    We must verify that $\varphi$ is well-defined. If there exists $t, r \in S$ such that $g = \igro(t) h = \igro(r) h$, then $\igro(t) = \igro(r)$, and so by \cref{thm_igro_same_implies_elts_act_same}, $ty = ry$.  Note that $\varphi(h) = y$.  Indeed, consider two cases: A) there is $t \in S$ such that $h = \igro(t) h$, and B) there is no such $t$.  In case A, $\igro(t) = e_{\gro(S)}$, so $\igro(t) = \igro(t^2)$. Since $t: X \to X$ is a surjection, there exists $z \in X$ such that $tz = y$.  By \cref{thm_igro_same_implies_elts_act_same}, we have that
    \[\varphi(h) = ty = t^2 z = tz = y,\]
    as claimed. In case B, by definition, $\varphi(h) = y$, and we are done.
    
    To see that $\varphi \in W_F$, suppose $g \in F$ and $t \in S$.  Write $g = \igro(r)h$ for some $r \in S$. By the definition of $\varphi$, we see that
    \[
        \varphi(\igro(t) g) = \varphi(\igro(t)\igro(r)h) = \varphi(\igro(tr)h) = (tr)y = t(ry) = t \big(\varphi(\igro(r)h)\big) = t\big(\varphi(g)\big).
    \]
    To see that $\varphi(e_{\gro(S)}) = x$, we see by the definition of $\varphi$ that
    \[\varphi(e_{\gro(S)}) = \varphi(\igro(s)h) = sy = x.\]

    We now prove the map $\pi$ intertwines with the action of $S$. (Recall that $W$ is an $S$-system by the homomorphism $\igro: S \to \gro(S)$, meaning that $s \varphi$ is defined as $\igro(s) \varphi$.)  For this, let $\varphi \in W$ and $s \in S$.  We see
    \[
        s \big(\pi (\varphi)\big) = s \big(\varphi(e_{\gro(S)})\big) = \varphi(\igro(s) e_{\gro(S)}) = (\igro(s) \varphi)(e_{\gro(S)}) = (s \varphi)(e_{\gro(S)}) = \pi (s \varphi),
    \]
    as was to be shown.
\end{proof}

\begin{theorem}
\label{lemma_nat_ext_univeral_prop}
\lean{natExtUniversalProperty}
\lean{natExtFactorMapIsIsomIfHomeoAction}
\leanok{}
Let $S$ be a commutative semigroup and $X$ be an $S$-system on which $S$ acts by surjections.  Its natural extension $\pi: W \to X$ is a smallest extension of $X$ on which $S$ acts by homeomorphisms: if $\rho: V \to X$ is an extension of $S$-systems and $S$ acts on $V$ by homeomorphisms, then there exists a factor map $\xi: V \to W$ of $S$-systems so that $\rho = \pi \circ \xi$. Moreover, if $S$ acts on $X$ by homeomorphisms, then $\pi$ is an isomorphism of $S$-systems.
\end{theorem}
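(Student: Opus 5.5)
The plan is to use \cref{thm_extend_homeo_action_to_gr_group} to upgrade the $S$-action on $V$ to a $\gro(S)$-action by homeomorphisms, and then define $\xi$ by recording $\gro(S)$-orbits. Since $S$ acts on $V$ by homeomorphisms, that theorem gives an action of $\gro(S)$ on $V$ by homeomorphisms, written $v \mapsto gv$, such that $\igro(s)v = sv$. For $v \in V$, define $\xi(v) \in X^{\gro(S)}$ by
\[
    \xi(v)(g) \defeq \rho(gv).
\]

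First, check that $\xi(v) \in W$. For $s \in S$ and $g \in \gro(S)$,
\[
    \xi(v)((\igro s)g) = \rho(\igro(s)gv) = \rho(s(gv)) = s\rho(gv) = s\,\xi(v)(g),
\]
using that $\rho$ is a factor map.

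Second, check continuity. The map $\xi$ is continuous because each coordinate $v \mapsto \rho(gv)$ is a composition of continuous maps.

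Third, check equivariance. For $h \in \gro(S)$,
\[
    \xi(hv)(g) = \rho(ghv) = \xi(v)(gh) = (h\xi(v))(g),
\]
since $\gro(S)$ is abelian. Hence $\xi$ intertwines the $\gro(S)$-actions, and therefore also the $S$-actions via $\igro$. Moreover, $\pi(\xi(v)) = \xi(v)(e_{\gro(S)}) = \rho(v)$, so $\rho = \pi \circ \xi$.

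The main obstacle is surjectivity of $\xi$, which is needed for $\xi$ to be a factor map. Since $V$ is compact and $\xi$ is continuous, $\xi V$ is closed, so it suffices to show $\xi V$ is dense in $W$. A basic open set around $\varphi \in W$ constrains finitely many coordinates $g \in F$. By \cref{thm_gro_group_props}, choose $h \in \gro(S)$ with $F \subseteq \igro(S)h$; for $\varphi \in W$, the values on $F$ are then determined by $\varphi(h)$, namely $\varphi(\igro(t)h) = t\varphi(h)$. Since $\rho$ is surjective, pick $u \in V$ with $\rho(u) = \varphi(h)$, and set $v \defeq h^{-1}u$. Then for $g = \igro(t)h \in F$,
\[
    \xi(v)(g) = \rho(\igro(t)hh^{-1}u) = \rho(tu) = t\varphi(h) = \varphi(g).
\]
So $\xi(v)$ agrees with $\varphi$ on $F$ exactly, which gives density and hence $\xi V = W$.

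For the final claim, apply the universal property with $V = X$ and $\rho = \mathrm{id}$ to obtain $\xi: X \to W$ with $\pi \circ \xi = \mathrm{id}$. It remains to show $\pi$ is injective. Suppose $\varphi \in W$. For any $g$, write $g = \igro(t)\igro(s)^{-1}$ using \cref{thm_gro_group_props}. Then $s\varphi(g) = \varphi(\igro(t)) = t\varphi(e_{\gro(S)})$. Since $s$ is injective on $X$, $\varphi(g)$ is determined by $\varphi(e_{\gro(S)}) = \pi(\varphi)$, so $\pi$ is injective. A continuous bijection of compact Hausdorff spaces is a homeomorphism, and it intertwines the actions, so $\pi$ is an isomorphism.
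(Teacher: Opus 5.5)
Your proposal is correct and follows the paper's proof essentially step for step: the same map $\xi_v(g)=\rho(gv)$ built from the $\gro(S)$-action, and the same density argument that chooses $h$ with $F \subseteq \igro(S)h$ and a preimage of $\varphi(h)$. The one deviation is in the ``moreover'' part. The paper gets injectivity of $\pi$ from $\pi\circ\xi=\mathrm{id}$ together with surjectivity of the factor map $\xi$, whereas you show directly, using injectivity of each $s$, that a point of $W$ is determined by its value at $e_{\gro(S)}$; this is also valid and makes your appeal to the universal property there unnecessary.
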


\begin{proof}
\leanok{}
    Suppose $\rho: V \to X$ is a factor map of $S$-systems and that $S$ acts on $V$ by homeomorphisms.  By \cref{thm_extend_homeo_action_to_gr_group}, since $S$ acts on $V$ by homeomorphisms, we get an action of $\gro(S)$ on $V$ that extends it. Note that this means that for all $v \in V$ and $s \in S$, we have
    \begin{align}
        \label{eqn_rho_factor_consequence}
        s (\rho v) = \rho( sv) = \rho ( (\igro s) v).
    \end{align}
    Define $\xi: V \to X^{\gro(S)}$ by
    \[\xi: v \mapsto \big( \xi_v : g \mapsto \rho (g v) \big).\]
    We will show that $\xi$ is continuous, that the image of $\xi$ is in $W$, that $\xi: V \to W$ is a factor map of $S$-systems, and that $\rho = \pi \circ \xi$.

    To see that $\xi$ is continuous, consider a cylinder set of $X^{\gro(S)}$ of the form
    \[[U]_{g} \defeq \{\varphi: \gro(S) \to X \mid \varphi(g) \in U \},\]
    where $g \in \gro(S)$ and $U \subseteq X$ is open. It is quick to check that
    \[\xi^{-1} [U]_g = g^{-1} \rho^{-1} U,\]
    which is an open subset of $V$. Finite intersections of such cylinders, which generate the topology on $X^{\gro(S)}$, will therefore have open pre-images under $\xi$, demonstrating that $\xi$ is continuous.
    
    Next we show that the image of $\xi$ is in $W$. Let $v \in V$, $g \in \gro(S)$, and $s \in S$.  We must show
    \begin{align*}
        \xi_v((\igro s) g) = s ( \xi_v(g)).
    \end{align*}
    We see that
    \[\xi_v((\igro s) g) = \rho \big( ((\igro s) g) v\big) = s \big( \rho (gv) \big) = s \big( \xi_v(g) \big),\]
    where the first and third equalities follows from the definition of $\xi_v$ and the second follows from \eqref{eqn_rho_factor_consequence}.

    To see that $\xi$ surjects onto $W$, we will show that the image of $\xi$ is dense in $W$.  This suffices since the image of $V$ under $\xi$ is closed, and hence equal to $W$.  Let $\varphi \in W$ and $g_1, \ldots, g_k \in \gro(S)$.  We will show that there exists $v \in V$ such that for all $j$, $\xi_v(g_j) = \varphi(g_j)$. By \cref{thm_gro_group_props}, there exists $h \in \gro(S)$ and $s_1, \ldots, s_k \in S$ such that $g_j = \igro(s_j) h$.  Since $h: V \to V$ is a homeomorphism and $\rho: V \to X$ is surjective, there exists $v \in h^{-1} \rho^{-1} \{ \varphi(h) \}$.  It follows that $\xi_v(h) = \rho ( hv) = \varphi(h)$. Then
    \[\varphi(g_j) = \varphi((\igro s_j) h) = s_j \big(\varphi (h)\big) = s_j \big(\xi_v(h)\big) = \xi_v((\igro s_j) h) = \xi_v(g_j),\]
    as was to be shown.

    We have only left to show that $\xi$ intertwines the action of $S$ on $V$ and $W$.  Let $v \in V$ and $s \in S$.  We must show that $\xi_{sv} = s \xi_v$.  Let $g \in \gro(S)$.  We see
    \begin{gather*}
        \xi_{sv}(g) = \rho(g(sv)) = \rho ((\igro(s) (gv)) = s \big(\rho (gv)\big) \qquad \qquad \qquad \qquad \qquad \qquad \\
        \qquad \qquad \qquad \qquad \qquad \qquad = s \big(\xi_v(g)\big) = \xi_v( (\igro s) g) = \xi_v( g(\igro s)) = (s \xi_v) (g),
    \end{gather*}
    as was to be shown.

    It remains to prove if $S$ acts by homeomorphisms, then $\pi$ is injective, and hence is an isomorphism of $S$-systems. Note that this follows immediately from the universal property.  Indeed, the universal property gives that since $S$ acts on $X$ via homeomorphisms, we have factor maps $X \to W \to X$ that compose to give the identity.  But this means that $\pi: W \to X$ is injective.
\end{proof}

\begin{theorem}
    \label{lemma_nat_ext_minimal}
    \lean{natExtMinimalIffSystemIsMinimal}
    \leanok{}
    Let $S$ be a commutative semigroup, $X$ be an $S$-system on which $S$ acts by surjections, and $\pi: W \to X$ be its natural extension.    
    The system $X$ is a minimal $S$-system if and only if the system $W$ is a minimal $S$-system.
\end{theorem}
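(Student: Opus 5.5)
The direction ``$W$ minimal $\Rightarrow$ $X$ minimal'' is immediate. Since $\pi: W \to X$ is a factor map of $S$-systems (\cref{lemma_nat_ext_props}), for any $x \in X$ we pick $\varphi \in W$ with $\pi\varphi = x$. Then $\pi \overline{S\varphi} = \overline{S \pi\varphi} = \overline{Sx}$ by \cref{lemma_image_closure_equals_closure_image}. Minimality of $W$ gives $\overline{S\varphi} = W$, and hence $\overline{Sx} = \pi W = X$.

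For the converse, suppose $X$ is minimal and fix $\varphi, \psi \in W$. We must show $\psi \in \overline{S\varphi}$. A basic neighborhood of $\psi$ is determined by finitely many coordinates $g_1,\dots,g_k \in \gro(S)$ and open sets $U_j \ni \psi(g_j)$.

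First I would reduce to a single coordinate. By \cref{thm_gro_group_props}, there is $h \in \gro(S)$ with $g_j = \igro(s_j) h$ for some $s_j \in S$. Then $\psi(g_j) = s_j \psi(h)$, and the same relation holds for every element of $W$. By continuity of the $s_j$, the set $U \defeq \bigcap_j s_j^{-1} U_j$ is an open set containing $\psi(h)$. Any $\chi \in W$ with $\chi(h) \in U$ then satisfies $\chi(g_j) = s_j \chi(h) \in U_j$ for all $j$. So it suffices to find $t \in S$ with $(\igro(t)\varphi)(h) = \varphi(h\,\igro t) \in U$.

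Second, I would use the $W$-relation to move from coordinate $h\,\igro t$ to coordinate $h$. We have $\varphi(h\, \igro t) = \varphi(\igro(t) h) = t\,\varphi(h)$, using commutativity of $\gro(S)$. By minimality of $X$, the orbit $S\varphi(h)$ is dense, so some $t \in S$ has $t\varphi(h) \in U$. This completes the argument.

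The only point that needs care is the first step: the finitely many coordinates must be simultaneously controlled by a single coordinate $h$. Property 1 of \cref{thm_gro_group_props} provides exactly this, applied to the finite set $F = \{g_1,\dots,g_k\}$. That property gives $s \in S$ with $F\igro s \subseteq \igro S$, and we take $h = (\igro s)^{-1}$. Everything else is a routine unwinding of the definitions of the product topology and of the action $(g\varphi)(h) = \varphi(hg)$.
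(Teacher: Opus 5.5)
Your proposal is correct and follows essentially the same route as the paper. The easy direction uses that $\pi$ is a factor map. For the converse, you use property 1 of \cref{thm_gro_group_props} to write every coordinate of a cylinder neighborhood of $\psi$ as $\igro(s_f)h$, apply minimality of $X$ at the single point $\varphi(h)$, and then use the defining relation of $W$ to transfer the conclusion back to each coordinate $f$.
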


\begin{proof}
\leanok{}
    Since $\pi: W \to X$ is a factor map of $S$-systems, if $W$ is a minimal $S$-system, then it is immediate that $X$ is a minimal $S$-system.

    Suppose conversely that $X$ is a minimal $S$-system. Let $\varphi, \psi \in W$, and let $U \subseteq W$ be an open set containing $\psi$. We will show that there exists $t \in S$ such that $t \varphi \in U$.
    
    By the definition of product topology on $X^{\gro(S)}$, the set $U$ contains a ``cylinder set'' around $\psi$. This means there exists a finite set $F \subseteq \gro(S)$ and, for all $f \in F$, an open set $U_f \subseteq X$ containing $\psi(f)$ satisfying the following: for any $\chi \in W$, if $\chi(f) \in U_f$ for all $f \in F$, then $\chi \in U$.

    By \cref{thm_gro_group_props}, there exists $h \in \gro(S)$ such that for all $f \in F$, there exists $s_f \in S$ such that $f = \igro(s_f) h$.  Thus, for all $f \in F$, we have that
    \[s_f \big(\psi (h)\big) = \psi( \igro(s_f) h) = \psi (f) \in U_f,\]
    whereby $\psi(h) \in \bigcap_{f \in F} s_f^{-1} U_f$.

    By the minimality of $X$, there exists $t \in S$ such that $t\varphi(h) \in \bigcap_{f \in F} s_f^{-1} U_f$; that is $s_f t \varphi(h) \in U_f$ for all $f \in F$. Moreover, for $f \in F$,
    \[
        s_f t \big(\varphi(h)\big) = \varphi( \igro(s_f) \igro(t) h) = (t \varphi) (\igro(s_f) h) = (t \varphi) (f),
    \]
    whereby $(t \varphi) (f) \in U_f$.  It follows that $t \varphi \in U$, as was to be shown.
\end{proof}

\begin{proof}[Proof of \cref{mainthm_nat_ext}]
    By \cref{lemma_minimal_comm_action_implies_surjective}, the semigroup $S$ acts by surjections.  Therefore, the hypotheses required for \cref{lemma_nat_ext_props}, \cref{lemma_nat_ext_univeral_prop}, and \cref{lemma_nat_ext_minimal} are met.
\end{proof}

\subsection{A proof of \cref{mainthm_kronecker_is_same_as_extension_kronecker}}
\label{sec_proof_of_same_kron}

In the statement of the following theorem, recall that \cref{mainthm_nat_ext} guarantees the existence of the natural extension for minimal actions by commutative semigroups.
The proof of \cref{thm_rp_of_natural_extension} relies crucially on \cref{thm_rp_strong_s_inv}, which we derive from \cref{mainthm_RP_is_eq_relation}.

\begin{theorem}
\label{thm_rp_of_natural_extension}
\lean{pullBackOfRPThruNatExtFactorIsRP}
\lean{maxEquiFactorOfNatExtIsoMaxEquiFactor}
\leanok{}
    Let $S$ be a commutative semigroup, $X$ be a minimal $S$-system, and $\pi: W \to X$ be its natural extension. We have that
    \[
        (\pi \times \pi)^{-1} \RP_X = \RP_W.
    \]
    As a result, the map $W/ \RP_W \to X / \RP_X$ given by 
    \[[w]_{\RP_W} \mapsto [\pi w]_{\RP_X}\]
    is well-defined and is an isomorphism of $S$-systems.
\end{theorem}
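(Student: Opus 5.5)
The approach is to prove the two inclusions separately, reducing the hard one to the statement that the fibers of $\pi$ are contained in $\RP_W$. By \cref{mainthm_nat_ext} (through \cref{lemma_nat_ext_minimal}), $W$ is a minimal $S$-system. Hence \cref{theorem_rp_is_ICER} applies to both $X$ and $W$, so $\RP_X$ and $\RP_W$ are ICERs, and \cref{lemma_rp_functorial_property_minimal} applies to the factor $\pi$.

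\emph{The inclusion $\RP_W \subseteq (\pi\times\pi)^{-1}\RP_X$.} This is exactly \cref{lemma_frp_functorial_property}, which gives $(\pi\times\pi)\RP_W \subseteq \RP_X$.

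\emph{The reverse inclusion.} Let $\varphi,\psi \in W$ with $(\pi\varphi,\pi\psi) \in \RP_X$. By \cref{lemma_rp_functorial_property_minimal}, $\RP_X = (\pi\times\pi)\RP_W$, so there are $\varphi',\psi' \in W$ with $(\varphi',\psi') \in \RP_W$, $\pi\varphi' = \pi\varphi$ and $\pi\psi' = \pi\psi$. Since $\RP_W$ is symmetric and transitive, it suffices to prove the following claim.

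\emph{Claim.} If $\varphi,\varphi' \in W$ satisfy $\varphi(e_{\gro(S)}) = \varphi'(e_{\gro(S)})$, then $(\varphi,\varphi') \in \RP_W$.

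The plan for the claim is to verify directly the definition $\RP_W = \bigcap_\alpha \overline{S^{-1}\alpha}$; in fact we will show $(\varphi,\varphi') \in S^{-1}\alpha$ for every open $\alpha \supseteq \Delta_W$.
\begin{enumerate}
\item By the defining property of $W$, $\varphi(\igro t) = t\,\varphi(e_{\gro(S)}) = t\,\varphi'(e_{\gro(S)}) = \varphi'(\igro t)$ for all $t \in S$. So $\varphi$ and $\varphi'$ agree on all of $\igro(S)$.
\item Fix $\alpha$. By compactness and the product topology, $\alpha$ contains a set of the form $\{(\chi,\chi') \in W^2 \mid (\chi(f),\chi'(f)) \in \gamma \ \forall f\in F\}$, where $F \subseteq \gro(S)$ is finite and $\gamma \subseteq X^2$ is open and contains $\Delta_X$. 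Any two points of $W$ that agree on $F$ belong to this set.
\item By \cref{thm_gro_group_props}(1), choose $s \in S$ with $F\,\igro s \subseteq \igro S$. Then $(s\varphi)(f) = \varphi(f\,\igro s)$, and since $f\,\igro s \in \igro S$, step 1 gives $(s\varphi)(f) = (s\varphi')(f)$ for every $f \in F$.
\item Hence $s(\varphi,\varphi') \in \alpha$, that is, $(\varphi,\varphi') \in S^{-1}\alpha$. Since $\alpha$ was arbitrary, $(\varphi,\varphi') \in \RP_W$.
\end{enumerate}

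With the claim in hand, $(\varphi,\varphi')$, $(\varphi',\psi')$ and $(\psi',\psi)$ all lie in $\RP_W$, so transitivity gives $(\varphi,\psi) \in \RP_W$. This completes the equality $(\pi\times\pi)^{-1}\RP_X = \RP_W$.

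\emph{The isomorphism.} The map $W/\RP_W \to X/\RP_X$, $[w] \mapsto [\pi w]$, is well defined and injective precisely because $(\pi\times\pi)^{-1}\RP_X = \RP_W$. It is surjective because $\pi$ is surjective. It is continuous by the universal property of the quotient topology applied to the continuous map $W \to X \to X/\RP_X$. It intertwines the $S$-actions because $\pi$ does. A continuous bijection between compact Hausdorff spaces (\cref{lemma_quotient_is_compact_hausdorff}) is a homeomorphism, so this map is an isomorphism of $S$-systems.

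The main obstacle is the claim about fibers. My first idea was to push RP-relatedness through every coordinate using the strong invariance $\RP_X = S^{-1}\RP_X$ of \cref{thm_rp_strong_s_inv}. The thickness of $\igro(S)$ in $\gro(S)$ avoids this: it makes fiber-mates asymptotic under the forward action, hence forward-regionally proximal with no further input.
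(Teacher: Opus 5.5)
Your proof is correct, but the reverse inclusion goes by a genuinely different route from the paper's.

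\emph{The paper's route.} It works entirely with $\RP_X$. First it shows $(\varphi(g),\psi(g)) \in \RP_X$ for every $g \in \gro(S)$. To do this it writes $(\igro s)g = \igro t$, uses $S$-invariance to get $s(\varphi(g),\psi(g)) \in \RP_X$, and then removes $s$ using the strong invariance $\RP_X = S^{-1}\RP_X$ of \cref{thm_rp_strong_s_inv}. It then checks the definition of $\RP_W$ by hand:
\begin{enumerate}
\item choose $h$ with $F \subseteq \igro(S)h$;
\item take a regionally proximal witness in $X$ at the coordinate $h$ for the pulled-back sets $\bigcap_f s_f^{-1}U_f$, $\bigcap_f s_f^{-1}V_f$, $\bigcap_f s_f^{-1}\alpha_f$;
\item lift it to $W$ using surjectivity of $\chi \mapsto \chi(h)$.
\end{enumerate}
This is essentially the coordinatewise strategy you considered and set aside.

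\emph{Your route.} You isolate a structural fact. Fiber-mates of $\pi$ agree on $\igro(S)$, so by the thickness property in \cref{thm_gro_group_props} a single $s$ makes them agree on any prescribed finite set of coordinates. Thus $\pi$ is a proximal extension and its fibers lie in $\RP_W$. You then finish with two general results applied to the minimal system $W$: \cref{lemma_rp_functorial_property_minimal} for the factor $W \to X$, and \cref{theorem_rp_is_transitive}.

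\emph{What each buys.} Your version is more modular. It shows that for any factor map of minimal systems of a commutative semigroup whose fibers lie in $\RP$ of the extension, $\RP$ upstairs is the full preimage of $\RP$ downstairs. It also bypasses strong invariance, and with it the identifications $\RP = \seq$ and $\seq = S^{-1}\seq$; it needs only transitivity and the surjectivity of $\RP$ under factors.

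The paper's version never has to reason about $\RP_W$ beyond its definition, neither its transitivity nor its image under $\pi$, and it produces explicit witnesses. The cost is strong invariance, which the paper obtains only as a corollary of the full \cref{mainthm_RP_is_eq_relation}. Both routes ultimately rest on the corner-space transitivity argument.
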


\begin{proof}
\leanok{}
    That $\RP_W \subseteq (\pi \times \pi)^{-1} \RP_X$ follows from \cref{lemma_frp_functorial_property}.  To see that $(\pi \times \pi)^{-1} \RP_X \subseteq \RP_W$, let $(\varphi, \psi) \in W^2$, and suppose $(\pi \varphi, \pi \psi) \in \RP_X$.  We must show that $(\varphi, \psi) \in \RP_W$.
    
    First, we claim that $(\varphi(g), \psi(g)) \in \RP_X$ for all $g \in \gro(S)$.  Indeed, let $g \in \gro(S)$.  By \cref{thm_gro_group_props}, there exist $s, t \in S$ such that $(\igro s)g = \igro(t)$.  Since
    \[
        (\varphi(e_{\gro(S)}), \psi(e_{\gro(S)})) = (\pi \varphi, \pi \psi) \in \RP_X,
    \]
    by the $S$-invariance of $\RP_X$ (see \cref{lemma_frp_basics}), we have that
    \[
        s(\varphi(g), \psi(g)) = (\varphi((\igro s)g), \psi((\igro s)g)) = t (\varphi(e_{\gro(S)}), \psi(e_{\gro(S)})) \in t\RP_X \subseteq \RP_X.
    \]
    By \cref{thm_rp_strong_s_inv}, $s(\varphi(g), \psi(g)) \in \RP_X$ implies that $(\varphi(g), \psi(g)) \in \RP_X$, as desired.

    Now we will show that $(\varphi, \psi) \in \RP_W$.  Let $U, V \subseteq W$ and $\alpha \subseteq W^2$ be open sets containing $\varphi$, $\psi$, and $\Delta_W$, respectively. By the definition of product topology on $W \subseteq X^{\gro(S)}$, there exists a finite set $F \subseteq \gro(S)$ and, for all $f \in F$, open sets $U_f, V_f \subseteq X, \alpha_f \subseteq X^2$ containing $\varphi(f)$, $\psi(f)$, and $\Delta_X$ respectively such that
    \begin{itemize}
        \item for $\chi \in W$, if $\chi(f) \in U_f$ for all $f \in F$, then $\chi \in U$,

        \item for $\chi \in W$, if $\chi (f) \in V_f$ for all $f \in F$, then $\chi \in V$,

        \item for $(\chi_1, \chi_2) \in W^2$, if $(\chi_1(f), \chi_2(f)) \in \alpha_f$ for all $f \in F$, then $(\chi_1, \chi_2) \in \alpha$.
    \end{itemize}
    
    By \cref{thm_gro_group_props}, there exists $h \in \gro(S)$ and, for all $f \in F$, $s_f \in S$ such that $f = (\igro s_f) h$.
    Then for all $f \in F$, we have 
    \[
        s_f \big(\varphi(h)\big) = \varphi((\igro s_f) h) = \varphi (f) \in U_f.
    \]
    It follows that $\bigcap_{f \in F} s_f^{-1} U_f$ is an open set containing $\varphi(h)$.
    Similarly, $\bigcap_{f \in F} s_f^{-1} V_f$ and $\bigcap_{f \in F} s_f^{-1} \alpha_f$ are open sets containing $\psi(h)$ and $\Delta_X$, respectively. By the first claim, $(\varphi(h), \psi(h)) \in \RP_X$, so there exists $s \in S$ such that
    \begin{align}
        \label{eqn_nonempty_subset_of_x2}
        \left( \bigcap_{f \in F} s_f^{-1} U_f \times \bigcap_{f \in F} s_f^{-1} V_f \right) \cap s^{-1} \bigcap_{f \in F} s_f^{-1} \alpha_f \neq \emptyset.
    \end{align}
    Since $\gro(S)$ acts on $W$ by homeomorphisms and $\pi$ is surjective, the map $\pi \circ h: W \to X$ defined by $\chi \mapsto \chi(h)$ is surjective.  Hence there exists $(\eta, \rho) \in W^2$ such that $(\eta(h), \rho(h))$ belongs to the set in \eqref{eqn_nonempty_subset_of_x2}. We see that for all $f \in F$,
    \[(\eta(f), \rho(f)) = s_f (\eta(h), \rho(h)) \in U_f \times V_f,\]
    whereby $(\eta,\rho) \in U \times V$. Moreover, for all $f \in F$,
    \[s (\eta(f), \rho(f)) = s s_f (\eta(h), \rho(h)) \in \alpha_f,\]
    whereby $(\eta, \rho) \in s^{-1} \alpha$.
    Therefore,
    \[(\eta, \rho) \in (U \times V) \cap s^{-1} \alpha,\]
    showing that $(\varphi,\psi) \in \RP_W$, as was to be shown.

    Finally, we will show that the map $W/ \RP_W \to X / \RP_X$ given by 
    \begin{align}
    \label{eqn_factor_quotient_map_def}
        [w]_{\RP_W} \mapsto [\pi w]_{\RP_X}
    \end{align}
    is an isomorphism of $S$-systems.  Since $(\pi \times \pi) \RP_W \subseteq \RP_X$, the surjective composition $W \to X \to X/\RP_X$ descends to the map $W / \RP_W \to X / \RP_X$ defined in \eqref{eqn_factor_quotient_map_def}, showing that the map in \eqref{eqn_factor_quotient_map_def} is well-defined, continuous (by a defining characteristic of the quotient topology), and surjective.
    It intertwines the actions of $S$ since $\RP_W$ and $\RP_X$ are $S$-invariant.

    We only have left to show that $\pi$ is injective.  Suppose $w, z \in W$ so that $[\pi w]_{\RP_X} = [\pi z]_{\RP_X}$.  Then $(\pi \times \pi) (w,z) \in \RP_X$.  Since $(\pi \times \pi)^{-1} \RP_X = \RP_W$, we have that $(w,z) \in \RP_W$, whereby $[w]_{\RP_W} = [z]_{\RP_W}$, as was to be shown.
\end{proof}

We conclude this section by proving \cref{mainthm_kronecker_is_same_as_extension_kronecker}.

\begin{proof}[Proof of \cref{mainthm_kronecker_is_same_as_extension_kronecker}]
\leanok{}
    That \eqref{item_x_kron} is isomorphic to \eqref{item_w_kron} follows from \cref{thm_rp_of_natural_extension}.
    
    To see that \eqref{item_w_kron} is isomorphic to \eqref{item_w_kron_gp}, note first that since $X$ is a minimal $S$-system,  so is $W$ (by \cref{lemma_nat_ext_minimal}). It follows that $W$ is a minimal $\gro(S)$-system. Since the actions of $S$ and $\gro(S)$ commute, \cref{mainthm_same_rp_for_comm_actions} gives that $\RP_{W,S} = \RP_{W,\gro(S)}$.  Therefore, as sets $W / \RP_{W,S} = W / \RP_{W,\gro(S)}$.  We need only show that $S$ acts on this set in the same way in both descriptions.  But this is immediate since both actions are induced from the action of $S$ on $W$.
\end{proof}

\section{Open directions}

We collect here several natural directions suggested by the results in this paper.

\subsection{Forward and backward regionally proximal behavior}

When $S$ is a group, there is a single definition for the regionally proximal relation for $S$-systems.
As discussed in the introduction, there are two natural ways to generalize this to semigroup actions.
The results in this paper demonstrate that the ``backward'' variant $\RP^-$ is useful, even if one only cares about the ``forward'' variant $\RP$.

We showed in \cref{example_rp_not_bacward_rp} that $\RP^-$ can be a strict subset of $\RP$ and in \cref{mainthm_RP_is_backward_RP} that for minimal actions by commutative semigroups, $\RP = \RP^-$.
It would be interesting to have a more complete understanding of the possible relationships between the sets $\RP$ and $\RP^-$.

\begin{question}
Determine the possible relationships between $\RP$ and $\RP^-$.  For example, is it true that $\RP^-$ is always a subset of $\RP$?  Are there natural necessary and sufficient conditions for $\RP = \RP^-$?
\end{question}

\subsection{Beyond actions of commutative semigroups}

Veech \cite{veech_1968} proved that $\RP$ is an equivalence relation for minimal actions of abelian groups, but, as mentioned in the introduction, there are minimal actions of non-abelian groups for which $\RP$ is not an equivalence relation.
While there is not, to our knowledge, a definitive characterization of those groups whose minimal actions yield a transitive $\RP$ relation, we highlight one particularly nice sufficient condition: $\RP$ is an equivalence if the there is a Borel probability measure invariant under the action of the group \cite[Ch. 9, Thm. 8]{auslander_book_1988}.
It is natural to ask whether the same result holds for semigroups.

\begin{question}
    Let $S$ be an semigroup and $X$ be a minimal $S$-system.  If $S$ fixes a Borel probability measure on $X$, is $\RP$ an equivalence relation?  In particular, if $S$ is left amenable, is $\RP$ an equivalence relation?
\end{question}

Given progress in this direction, there are a host of other conditions sufficient for $\RP$ to be an equivalence relation \cite[V(1.7)]{devries_book_1993} that might be generalized to semigroup actions.

\subsection{Higher-order generalizations}

Recall from the introduction that for an $S$-system $X$,
\[
    \RP \defeq \bigcap_\alpha \overline{\bigcup_{s \in S} s^{-1} \alpha},
\]
where the intersection is over all open $\alpha \subseteq X^2$ containing the diagonal.
Higher-order analogues of $\RP$ were introduced and studied by Host, Kra, and Maass \cite{host_kra_maass_2010} for $\Z$-actions.
Following their definition, for commutative semigroup actions and $d \in \N$, we might define
\begin{align}
\label{eqn_higher_order_rp}
    \RP^{[d]} := \bigcap_{\alpha}\overline{\bigcup_{s_1, \ldots, s_d \in S} \bigcap_{\substack{e_1, \ldots, e_d \in \{0,1\} \\ \text{not all zero}}} (s_1^{e_1} \cdots s_d^{e_d})^{-1} \alpha},
\end{align}
where the intersection is over all open $\alpha \subseteq X^2$ containing the diagonal.

Upgrading a result in \cite{host_kra_maass_2010}, Shao and Ye \cite{shao_ye_2012} proved that $\RP^{[d]}$ is an equivalence relation for minimal actions of abelian groups.  The following question is then a natural wedge of this result with part of our \cref{mainthm_RP_is_eq_relation}.

\begin{question}
\label{quest_higher_order_rp_for_semigroups}
    Let $S$ be a commutative semigroup and $X$ be a minimal $S$-system.  Is $\RP^{[d]}$ an equivalence relation?
\end{question}

As was helpful for us in this paper, it may be useful in addressing \cref{quest_higher_order_rp_for_semigroups} to develop a ``backward'' variant $(\RP^{[d]})^-$ of $\RP^{[d]}$ in parallel to the one in \eqref{eqn_higher_order_rp}.  A natural first question: under what conditions are the forward and backward variants the same?

Glasner, Gutman, and Ye \cite{glasner_gutman_ye_2018} generalized the higher-order regionally proximal relation from the one in \eqref{eqn_higher_order_rp}, which is suitable for commutative group actions, to one suitable for arbitrary group actions.
They showed that this generalized relation is an equivalence relation for all minimal group actions.
One could similarly ask for a semigroup analogue of their results.

\begin{question}
    Is there a generalization of $\RP$ for semigroup actions in line with the one for group actions in \cite{glasner_gutman_ye_2018} which is an equivalence relation under all minimal actions of arbitrary semigroups?  
\end{question}

\bibliographystyle{abbrv}
\bibliography{bib}

\bigskip

\bigskip
\footnotesize
\noindent
Angelina Blahodatna\\
\textsc{University of Massachusetts Lowell}\par\nopagebreak
\noindent
\href{mailto:Angelina_Blahodatna@student.uml.edu}
{\texttt{angelina{\_}blahodatna@student.uml.edu}}

\bigskip
\footnotesize
\noindent
Lauren Detmold\\
\textsc{University of Massachusetts Lowell}\par\nopagebreak
\noindent
\href{mailto:Lauren_Detmold@student.uml.edu}
{\texttt{lauren{\_}detmold@student.uml.edu}}

\bigskip
\footnotesize
\noindent
Daniel Glasscock\\
\textsc{University of Massachusetts Lowell}\par\nopagebreak
\noindent
\href{mailto:daniel_glasscock@uml.edu}
{\texttt{daniel{\_}glasscock@uml.edu}}

\bigskip
\noindent
Anh N. Le\\
\textsc{University of Denver}\par\nopagebreak
\noindent
\href{mailto:anh.n.le@du.edu}
{\texttt{anh.n.le@du.edu}}

\end{document}